\documentclass[11pt, oneside]{amsart} 

\usepackage{geometry}     
\usepackage{comment}
\usepackage{graphicx}	
\usepackage{amsmath,amssymb,amsthm}
\usepackage{color}
\usepackage{pgfplots}
\usepackage{pgfplotstable}
\pgfplotsset{compat=newest}
\pgfplotsset{table/search path={PGFPlots}}
\usepgfplotslibrary{patchplots}
\usepgfplotslibrary{colormaps}
\usepgfplotslibrary{groupplots}
\usepackage{subcaption}
\usepackage{diagbox}

\usepackage{algorithm}
\usepackage{algorithmicx}
\usepackage{algpseudocode}%
\usepackage{hyperref}
\usepackage{url}

\renewcommand{\vec}[1]{\boldsymbol{#1}}

\newtheorem{theorem}{Theorem}
\newtheorem{proposition}[theorem]{Proposition}
\newtheorem{remark}[theorem]{Remark}%
\newtheorem{lemma}[theorem]{Lemma}%
\newtheorem{definition}[theorem]{Definition}
\newtheorem{assumption}[theorem]{Assumption}
\newtheorem{corollary}[theorem]{Corollary}

\newcommand{\R}{\mathbb{R}}

\newcommand{\N}{\mathbb{N}}

\newcommand{\magicC}{\kappa}
\DeclareMathOperator{\dist}{dist}

\renewcommand{\vec}[1]{\boldsymbol{\mathbf{#1}}}

\title{Adaptive RBF-multiscale approximation on bounded domains}
\author{Federico Lot and Christian Rieger} 
\address{Department of Mathematics and Computer Science (FB12),  Philipps-Universit\"at Marburg,
Hans-Meerwein-Stra\ss{}e 6, Marburg, 35032, Hessia, Germany}
\email{\{lotf,riegerc\}@mathematik.uni-marburg.de}
\thanks{This work is funded
by the Deutsche Forschungsgemeinschaft (DFG, German Research Foundation) –Projektnummer
452806809.}

\date{\today}

\begin{document}

\begin{abstract}
    This article addresses adaptivity in the kernel multiscale method. Adaptively compressed kernel multiscale approximations have already been presented and analyzed in \cite{LeGia:Wendland:2014}. 
    The main contribution of this work is to attempt to avoid using function evaluations which will be deleted in the compression step anyways. In order to work with function values, we always work in the Lagrange representation. We still assume to have all function values available to compute error norms, but we do not include those values in our approximations. Finally, we also employ local Lagrange function to further reduce the numerical work.
\end{abstract}

\maketitle

\textbf{Keywords:}
Multilevel kernel methods; Adaptive interpolation; Wendland RBFs

\section{Introduction}

The reconstruction of functions from finitely many (scattered) point evaluations has been a classical task in various applications. Historically, radial basis functions (RBFs) haven been proven to be useful in those applications. In order to improve numerical properties of the classical RBF interpolation, the so-called RBF-multiscale methods have been introduced in \cite{floater:iske:1996}. The analysis of these methods can be found in \cite{Wendland:2010} for bounded domains, in \cite{LeGia:etal:2010} for the sphere, and in the case of more general manifolds in \cite{Sharon:etal:2023}.
Recently, a variant of the classical multiscale algorithm which allows for better parallelization was presented and analyzed in \cite{Lot:Rieger:2026}.

The method is based on a hierarchy of sets of points and uses suitably scaled variants of the RBF to approximate the unknown function with an increasing amount of details on the finer levels. 

As already pointed out in \cite{LeGia:eta:2017}, the application might not need to use the same level of details everywhere. Sometimes, it might be enough to have only locally a detailed approximation of the unknown function. A somewhat complementary view is taken in \cite{LeGia:Wendland:2014}. Here, the authors study the situation where the quality of the approximation is locally already sufficient, and hence omit further details in those regions. This approach is in the spirit of adaptive methods but the adaption is made a posterior, i.e., after having computed all the details, it is decided which of them can be discarded given a certain error tolerance.

These works are the motivation for this article. The main difference from \cite{LeGia:Wendland:2014} lies in the selection or adaptation criteria. Indeed, we propose an alternative solution to remove the points prior to the computation of the local approximation, further reducing the computational cost, without compromising the approximation rate.
Technically, we base our analysis on a localization result from \cite{demarchi:wendland:2020}. This allows to safely remove points from the computation if the error is suitably small on a neighborhood of the point of interest. The details are presented in the novel algorithm \ref{alg:selection}. Based on our novel adaptation criterion, we propose the algorithm \ref{alg:adapt}, which is a variant of the classical multiscale algorithm of \cite{floater:iske:1996} enhanced with an adaptive point selection method. 
Moreover, in \ref{thm:adapt_errorbound}, we present an error analysis of our novel adaptive multiscale algorithm. The error bound is inspired by the bound in \cite{LeGia:Wendland:2014} and is explicit in the algorithm parameters. The explicit dependence of the problem parameters allows for suitable choices for the parameters. 
We further propose a variation of the main algorithm in order to improve its efficiency, inspired by locality of Lagrange function in \cite{Fuselier:etal:2013}. The error difference with our first proposed algorithm is analyzed in theorem \ref{thm:localerror}.

We point out that our method only requires knowledge of the unknown function of the finest possible grid (which has to be chosen in advance). In particular, we also need to have function evaluations at points which are discarded in the adaptive point selection. Compared to classical adaptive methods for the solution on operator equations, this seems too much information. But, we would like to stress that we do not have an operator equation to solve and hence no access to a right-hand side of this operator equation, which provides usually additional knowledge in adaptive methods.

The manuscript is structured as follows: In Section \ref{sec:notation}, we recall and extend results from the classical kernel multiscale method. 
Section \ref{sec:adaptivity} is the main section, where we introduce the new adaptive kernel multiscale algorithm and present an accompanying error analysis for it. Finally in Section \ref{sec:numerics}, we conclude this article by describing the numerical implementation and some numerical experiments.

\section{Notation and setting}
\label{sec:notation}
We first introduce the notation we use in the following document.

For scattered nodes, radial basis functions are a common tool to compute interpolations. Radial basis functions are described by a function $K_{\Phi}:\R^{d}\times \R^{d} \to \R$ with $K_{\Phi}(\vec{x},\vec{y}) =\Phi(\vec{x}-\vec{y})= \phi(\|\vec{x}-\vec{y} \|_2)$, where $\Phi:\R^{d} \to\R$ is a multivariate translation-invariant function and $\phi: \R \to \R$ is a univariate function. We are mostly interested in strictly positive definite functions with compact support.
We collect the assumptions on the kernel for later reference.
\begin{assumption}
	\label{ass:kernel}
	Let $\Phi:\R^{d} \to\R$ with $\Phi \in L_{1}(\R^d)$  be a radial function and $\tau>\frac{d}{2}$.  Let $c_{\Phi},C_{\Phi}>0$ be positive constants, and assume that the Fourier transform of $\Phi$ behaves like 
	\begin{equation} \label{eq:kernelfourier}
      c_{\Phi} \left(1+\|\vec{\omega}\|_{2}^{2}\right)^{-\tau} \le \widehat{\Phi}(\vec{\omega}) \le  C_{\Phi} \left(1+\|\vec{ \omega}\|_{2}^{2}\right)^{-\tau}  
    \end{equation}
	and that $\Phi$ has compact support in the unit ball, i.e.,
    \begin{equation*}
        \operatorname{supp}(\Phi) \subset B_{1}(0)=\{ \vec{x} \in \R^d : \| \vec{x}\|_2\le 1\}\subset \R^{d}.
    \end{equation*}
    The scaled kernel is defined as
	\begin{equation}\label{eq:kernelscaling}
    \Phi_{\delta}(\vec{x}) := \delta^{-d}\Phi(\vec{x}/\delta).
    \end{equation}
\end{assumption}

We also recall the definition of the norm for a functions $\Phi$ which assert Assumption \ref{ass:kernel},
\begin{equation}\label{eq:kernelnorm}
    \|g\|_{\Phi}^{2} := \int_{\R^{d}} \frac{|\widehat{g}(\boldsymbol{\omega})|^{2}}{\widehat{\Phi}(\boldsymbol{\omega})} d\boldsymbol{\omega}.
\end{equation}

Note that condition \eqref{eq:kernelfourier} ensures that $\Phi$ is the reproducing kernel of a Hilbert space 
\begin{equation*}
    \mathcal{H}_{\Phi}(\R^d) = \{g \in L^{2}(\R^d) \, : \, \|g\|_{\Phi} < \infty \}
\end{equation*}
which is isometric to the Sobolev space $H^{\tau}(\R^d)$, equipped with the alternative norm
\begin{equation*}
    \|g\|_{H^{\tau}(\R^{d})}^{2} := \int_{\R^{d}} |\widehat{g}(\boldsymbol{\omega})|^{2}(1+\|\boldsymbol{\omega}\|_{2}^{2})^{\tau} d\boldsymbol{\omega}.
\end{equation*}
Indeed, the two norms are equivalent. 

Additionally, in the scaling setting, the Fourier-transform of $\Phi_{\delta}$ is given as
\begin{equation}
     c_{\Phi} \left(1+\delta^{2}\|\vec{\omega}\|_{2}^{2}\right)^{-\tau} \le \widehat{\Phi_{\delta}}(\vec{\omega}) \le  C_{\Phi} \left(1+\delta^{2}\|\vec{\omega}\|_{2}^{2}\right)^{-\tau},
    \label{eq:deltafourier}
\end{equation}
which leads to the norm equivalence
\begin{equation*}
    c_{\Phi}^{1/2} \|g\|_{\Phi_{\delta}} \le \|g\|_{H^{\tau}(\R^{d})} \le C_{\Phi}^{1/2} \delta^{-\tau} \|g\|_{\Phi_{\delta}},
\end{equation*}
thus, the isometry of $\mathcal{H}_{\Phi_{\delta}}(\R^d)$ with $H^{\tau}(\R^d)$, for any choice of $\delta > 0$, see e.g. \cite{Wendland:2010}.

For multiscale methods, one usually considers not a fixed set of points but a sequence of sets of points.

As usual, for a given set $X \subset \Omega$, we have the fill distance
\begin{equation*}
   h_{X, \Omega} := \sup_{\vec{x} \in \Omega} \min_{\vec{x}_{j} \in X} \|\vec{x} - \vec{x}_{j}\|_{2}, 
\end{equation*}
and the separation distance
\begin{equation*}
     q_{X} := \frac{1}{2} \min_{j \neq k} \| \vec{x}_{j} - \vec{x}_{k}\|_{2}.
\end{equation*}

We collect the assumptions on the sets of points for later reference.
Moreover, let $N:\N=\{1,\dots\} \to \N$ denote a sequence of natural numbers and $X_{\ell} = \{\vec{x}^{(\ell)}_{1}, \dots, \vec{x}^{(\ell)}_{N(\ell)}\} \subset \Omega$ be a set of scattered nodes for $\ell \in \N$. We consider levels up to level $L\in \N$ and define the total number of points as $N=\sum_{\ell=1}^{L}N(\ell)$. We also assume the sets to be nested, i.e. $X_{\ell} \subset X_{\ell+1}$, although we point out results where such constraint can be lifted. 

Moreover, we define the usual discretization parameters $h_{\ell}:= h_{X_{\ell}, \Omega}$ and $q_{\ell}:=q_{X_{\ell}}$.

\begin{assumption}
    \label{ass:pointset}
    We assume $\Omega$ to be a bounded domain with Lipschitz boundary and $X_{1} \subset X_{2} \dots \subset X_{L} \subset \Omega$ for $L \in \N$.
    Moreover, we assume that there exist $k \ge 1$, $ \mu \in (0, k^{-1})$ and $c_{h} \in (0,1]$ such that
    \begin{align*}
       & c_{h} \mu h_{\ell} \le h_{\ell+1} \le \mu h_{\ell}, \quad \text{for } 1 \le \ell \le L \\
       & c_{h} \mu^{\ell} h_{0} \le h_{\ell} \le \mu^{\ell} h_{0}, \quad \text{for } 1 \le \ell \le L
    \end{align*}
    holds, where $h_{0} := h_{1}/\mu$.
    Next, we require that the sequence of the sets of points $X_{\ell}$ is quasi-uniform, i.e. 
    there exists a constant $c_{q}$ such that the following equivalence holds
    \begin{equation}\label{eq:quasiuniform}
        q_{\ell} \le h_{\ell} \le c_{q} q_{\ell},  \quad \quad \text{for } 1 \le \ell \le L.
    \end{equation}
    We further assume that the number of points at a given level is connected to the parameter $\mu$, i.e. that exists a constant $c_{\#}>0$ such that
    \begin{equation}\label{eq:Nmu}
        c_{\#} \mu^{-d\ell} \le N(\ell) \quad \quad \text{for } 1 \le \ell \le L
    \end{equation}
    holds. Finally, for fixed $\gamma>0$, for every $1 \le \ell \le L$, we choose 
    \begin{equation}\label{eq:deltadef}
    	\delta_{\ell} := \nu h_{\ell} \quad \text{ for }\nu \in \left[ \frac{1}{h_{1}} ,\frac{\gamma}{\mu} \right]
    \end{equation}
    as the scaling parameter in \eqref{eq:kernelscaling} on the  point set $X_{N(\ell)}$. 
\end{assumption}
It is worth pointing out that the introduction of $\gamma$, as well as the range of feasible values of $\nu$, come strictly from \cite{Wendland:2010}[Theorem 1]. Within the range, all choices of $\nu$ guarantee convergence; however, lower values are associated with sparser matrices, i.e., less computational time, while higher values are associated with higher accuracy.

Moreover, given \eqref{eq:quasiuniform}, there is a matching upper bound for the inequality \eqref{eq:Nmu}. Indeed, a volume comparison argument gives 
\begin{equation}\label{eq:Nmu2}
    N(\ell) \le C_{\#} \mu^{-d\ell}.
\end{equation}

Furthermore, we have 
\begin{lemma}\label{lem:Lstar}
	Let Assumption \ref{ass:pointset} holds. Additionally, without loss of generality, assume $h_{\ell} < 1$, $\ell \in \N_{0}$ (since we refer to bounded domains, we can always rescale ourselves in the unitary box).
	Then, we get for $\ell \ge L^{\star}:=\lceil \frac{k\mu}{1-k\mu} \rceil$ the bound
	 \begin{align*}
	 	k h_{\ell+1} |\log(h_{\ell+1}) | \le h_{\ell} |\log(h_{\ell}) |.
	\end{align*}
\end{lemma}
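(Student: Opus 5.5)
The plan is to compare the two sides through the function $f(t) := t|\log t| = -t\log t$ on $(0,1)$, using only the upper bound $h_{\ell+1}\le \mu h_\ell$ from Assumption~\ref{ass:pointset}, the monotonicity of $f$, and the geometric decay $h_\ell\le \mu^\ell h_0$. This route avoids the lower bound $h_{\ell+1}\ge c_h\mu h_\ell$. That bound would introduce a term $\log(1/c_h)$, and no such term appears in the threshold $L^\star$. So the claim should follow from elementary calculus.

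\textbf{Step 1: monotonicity.} The function $f$ is increasing on $(0,1/e]$, since $f'(t) = -\log t - 1 \ge 0$ there. Assume for the moment that $\mu h_\ell \le 1/e$. Then $h_{\ell+1}\le\mu h_\ell$ gives
\begin{equation*}
k\,f(h_{\ell+1}) \le k\,f(\mu h_\ell) = k\mu h_\ell\bigl(|\log h_\ell| + \log(1/\mu)\bigr).
\end{equation*}

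\textbf{Step 2: reduction to an inequality on $|\log h_\ell|$.} Write $a := |\log h_\ell|$. By Step~1 it suffices to show $k\mu(a + \log(1/\mu)) \le a$. Since $k\mu<1$, this is equivalent to
\begin{equation*}
a \ge \frac{k\mu}{1-k\mu}\log(1/\mu).
\end{equation*}

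\textbf{Step 3: lower bound on $a$.} From $h_\ell\le \mu^\ell h_0$ and $h_0<1$ (the normalization includes $\ell=0$), we get $a = -\log h_\ell \ge \ell\log(1/\mu) - \log h_0 \ge \ell \log(1/\mu)$. Hence the inequality in Step~2 holds as soon as $\ell \ge \frac{k\mu}{1-k\mu}$, which is guaranteed by $\ell\ge L^\star = \lceil \frac{k\mu}{1-k\mu}\rceil$. This yields the claim.

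\textbf{Main obstacle.} The delicate point is the monotonicity hypothesis $\mu h_\ell\le 1/e$ used in Step~1. The assumption $h_\ell<1$ alone does not guarantee it, because $f$ is not monotone on all of $(0,1)$. I would first try to absorb this into the normalization. Since $\Omega$ is bounded, we may rescale so that $h_0\le 1/e$, and then $\mu h_\ell \le \mu^{\ell+1}h_0 \le 1/e$ for all $\ell$. If the argument must work with only $h_\ell<1$, I would instead split into cases. If $\mu h_\ell\le 1/e$, proceed as above. If $\mu h_\ell>1/e$, then $h_\ell > 1/(e\mu)$ is bounded below, while Step~3 shows $h_\ell\le\mu^\ell$. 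For $\ell\ge L^\star$ these two facts should be incompatible, or at least confine $\ell$ to a small range in which the inequality can be checked directly using $f\le 1/e$.
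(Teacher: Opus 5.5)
Your argument follows the paper's proof. Steps 2 and 3 are identical to it: reduce to $|\log h_\ell|\ge \frac{k\mu}{1-k\mu}|\log\mu|$, then use $|\log h_\ell|\ge \ell|\log\mu|$, which follows from $h_\ell\le\mu^\ell h_0$ and $h_0<1$. Step 1 differs, and yours is the more careful version. The paper writes $k h_{\ell+1}|\log h_{\ell+1}| = k\mu h_\ell(|\log h_\ell|+|\log\mu|)$ as an equality, which holds only if $h_{\ell+1}=\mu h_\ell$. Your use of the monotonicity of $t\mapsto t|\log t|$ is the correct way to pass from the inequality $h_{\ell+1}\le\mu h_\ell$ to that bound.

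As written, however, your proof is not finished: the case $\mu h_\ell>e^{-1}$ is left with a conjecture ("should be incompatible"). Option (a) does not close it. Rescaling to $h_0\le e^{-1}$ changes the statement, because the inequality is not scale invariant and the lemma only assumes $h_\ell<1$. Option (b) does close it, and quickly. Set $x:=\frac{k\mu}{1-k\mu}$, so that $\frac{1}{k\mu}=1+\frac1x$. For $\ell\ge L^\star\ge x$ and $k\ge 1$,
\begin{equation*}
(\ell+1)\log(1/\mu)\ \ge\ (1+x)\log\bigl(1/(k\mu)\bigr) = (1+x)\log\bigl(1+\tfrac1x\bigr)\ \ge\ 1,
\end{equation*}
where the last step is $(1+u)\log(1+u)\ge u$ with $u=1/x$. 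Hence $\mu h_\ell\le\mu^{\ell+1}h_0<\mu^{\ell+1}\le e^{-1}$ for every $\ell\ge L^\star$. So the bad case never occurs, Step 1 applies under the lemma's hypotheses alone, and with this addition your proof is complete.
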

\begin{proof}
    Since both $h_{\ell}$ and $\mu$ are smaller than $1$, we have
    \begin{align*}
    	k h_{\ell+1} |\log(h_{\ell+1}) | = k \mu h_{\ell} \left( |\log(h_{\ell})| + |\log(\mu)|\right).
    \end{align*}
    Thus, is sufficient to require
    \begin{align*}
    	&k \mu \left( |\log(h_{\ell})| + |\log(\mu)|\right) \le |\log(h_{\ell})| \\
        &\iff  |\log(h_{\ell})|(1-k\mu) \ge k\mu |\log(\mu)| \\
        &\iff  |\log(h_{\ell})| \ge \frac{k\mu}{(1-k\mu)} |\log(\mu)|.
    \end{align*}
    Here we can directly relate the result to $\ell$, exploiting our assumption, in particular, $h_{\ell} < 1$ for every $\ell$, we have
    \begin{equation*}
        |\log(h_{\ell})| \ge |\log(h_{0}\mu^{\ell})| \ge \ell|\log(\mu)|
    \end{equation*} 
    which for $\ell \ge \lceil\frac{k\mu}{(1-k\mu)}\rceil$ leads to the desired upper bound.
\end{proof}

\begin{remark}
    As a practical observation on the stated lemma we have, e.g. for $k=1$, $\mu=1/2$ that this result holds for $\ell \ge 2$, and further examples are summarized in Table \ref{tab:hloghbound}.
    \begin{table}
        \centering
        \bgroup
        \begin{tabular}{c|c|c|c|c|c|c}
             $k$ & $1$ & $1$ & $1$ & $5$ & $5$ & $5$\\
             $\mu$ & $0.5$ & $0.9$ & $0.1$ & $0.1$ & $0.19$ & $0.01$ \\
             \hline 
             \noalign{\smallskip} $ L^{\star}$ & $1$ & $9$ & $1$ & $1$ & $19$ & $1$\\ 
        \end{tabular}
        \egroup
        \caption{upper bound $L^{\star}$ with different choices of $k$ and $\mu$}
        \label{tab:hloghbound}
    \end{table}
\end{remark}

The preliminary lemma \ref{lem:Lstar} highlights that, from level $\ell > L^{\star}$, we have that 
\begin{equation*}
    B_{k h_{\ell+1} |\log(h_{\ell+1}) |}(\vec{x}) \subseteq B_{h_{\ell} |\log(h_{\ell}) |}(\vec{x}), \qquad x \in \R^{d},
\end{equation*}
which we exploit in Theorem \ref{thm:consistency}.

Next, introducing the notation $\Phi_{\ell}:=\Phi_{\delta_{\ell}}$, we define approximation spaces on the level $1\le \ell \le L$ as
\begin{equation}\label{eq:localspaces}
    W_{\ell} = \text{span}\{\Phi_{\ell}(\cdot, \vec{x}_{j}^{(\ell)}) \, : \, 1\le j \le N(\ell) \}\subset \mathcal{H}_{\Phi_{\ell}}(\R^d),
\end{equation}
where $\mathcal{H}_{\Phi_{\ell}}(\R^d)$ is the reproducing kernel Hilbert space of the scaled translation-invariant kernel $\Phi_{\ell}$. Precisely, $\mathcal{H}_{\Phi_{\ell}}(\R^d)$ is the completion of $\{\Phi_{\ell}(\cdot, \vec{x}_{j}^{(\ell)}) \, : \, 1\le j \le N(\ell) \}$ with respect to the kernel-norm $\|\cdot\|_{\Phi_{\ell}}$, \cite{wendland_2004}.

The global approximation space is given as 
\begin{equation*}
	V_{L} = W_{1} + \dots +W_{L},
\end{equation*}
where the sum is in general not direct.

We define the interpolator operator $\mathcal{I}_{\ell}:C(\Omega)\to \mathcal{H}_{\Phi_{\ell}}$ based on the kernel $\Phi_{\ell}$. This operator is given as
\begin{equation}
    \label{eq:approximant}
 \mathcal{I}_{\ell}(f)= \sum_{n=1}^{N(\ell)} \alpha_{n}^{(\ell)} \Phi_{\ell}(\cdot, \vec{x}_{n}^{(\ell)}),
\end{equation}
where, given 
\begin{equation*}
    A_{h, \ell} := \left( \Phi_{\ell}(\vec{x}^{(h)}, \vec{x}^{(\ell)}) \right)_{\genfrac{}{}{0pt}{}{\vec{x}^{(h)} \in X_{h}}{\vec{x}^{(\ell)} \in X_{\ell}}} \in \R^{N(h)\times N(\ell)},
\end{equation*}
the coefficients $\vec{\alpha}^{(\ell)} \in \R^{N(\ell)}$ satisfy the system of linear equations
\begin{equation*}
\ A_{\ell, \ell} \vec{\alpha}^{(\ell)} = \mathbf{f}^{(\ell)}. 
\end{equation*}

Assuming that we are given the sequence of sets of points $X_{\ell}$ for $1\le \ell \le L$, the approximation of a function $f$ is computed, following the idea of \cite{floater:iske:1996}. Initially, we set $f_0 = 0$ and $e_0 = f$. 
For $1\le \ell \le L$, we iterate
\begin{itemize}
	\item compute approximand $s_{\ell} (f):= \mathcal{I}_{\ell}(e_{\ell-1}) \in W_{\ell}$ based on point set $X_{N(\ell)}$
	\item update $f_{\ell} = f_{\ell-1} + s_{\ell}$ and  $e_{\ell} = e_{\ell-1} - s_{\ell}$ 
\end{itemize} 	
to obtain the final approximation $\sum_{\ell=1}^{L}s_{\ell} (f) \in V_{L}$ with $\sum_{\ell=1}^{L}s_{\ell} (f) \approx f$.
From the algorithm, we directly obtain the recursive representation
\begin{equation}\label{eq:recursive:localinterpol}
	 s_{\ell}(f) = \mathcal{I}_{\ell}(e_{\ell-1}) = \mathcal{I}_{\ell}\left(f - \sum_{j = 1}^{\ell-1} s_{j}(f)\right), \quad 1\le \ell \le L,
\end{equation} 
At the level of coefficients, iteration \eqref{eq:recursive:localinterpol} corresponds to 
\begin{equation}\label{eq:mas}
    A_{\ell, \ell} \vec{\alpha}^{(\ell)} = \mathbf{f}^{(\ell)} - \sum_{k=1}^{\ell-1} A_{\ell k} \vec{\alpha}^{(k)}, \quad \mathbf{f}^{(\ell)}: = f|_{X_{\ell}} \in \R^{N(\ell)}.
\end{equation}

The methods can also be formulated as an algorithm given in Alg. \ref{alg:ms}.

\begin{algorithm}[ht]
    \caption{Multiscale approximation \label{alg:ms}}
    \begin{algorithmic}[1]
    
    \State{Input: Number of levels $L$, sets of points $X_{1}, \dots, X_{L}$, rhs $f$}
    \State{Output: Multiscale approximation $f_{L}$.}
    \vspace{0.1cm}
    \State $e_{0} \leftarrow f$; 
    \State $f_{0} \leftarrow 0$; 
    \For{$\ell = 1, \dots, L-1$}   
        \State $s_{\ell} \leftarrow \mathcal{I}_{\ell}(e_{\ell-1})$; 
        \State $e_{\ell} \leftarrow e_{\ell-1} - s_{\ell}$; 
        \State $f_{\ell} \leftarrow f_{\ell-1} + s_{\ell}$; 
    \EndFor
    \end{algorithmic}
\end{algorithm}

From \cite[Theorem 1]{Wendland:2010}, we have an error bound for the approximation using algorithm \ref{alg:ms}.

Since we assume our target function to be defined on a bounded domain with Lipschitz boundary, an extension operator can be defined.
We need the following result from \cite[Proposition 1]{Wendland:2010}
\begin{proposition} \label{prop:extension}
    Suppose $\Omega \subseteq \R^{d}$ is open and has a Lipschitz boundary. Let $\tau \ge 0$. Then, it exists a linear operator $E: H^{\tau}(\Omega) \rightarrow H^{\tau}(\R^{d})$, such that, for all $f\in H^{\tau}(\Omega)$,
    \begin{align*}
        &Ef|_{\Omega} = f|_{\Omega}, \\
        &\|Ef\|_{H^{\tau}(\R^{d})} \le C_{\tau} \|f\|_{H^{\tau}(\Omega)},
    \end{align*}
    i.e., $E$ is a bounded extension operator. Furthermore, the same operator $E$ can be used for every $\tau \ge 0$.
\end{proposition}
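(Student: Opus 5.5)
Proposition \ref{prop:extension} is the classical Stein extension theorem for Lipschitz domains, quoted from \cite[Proposition 1]{Wendland:2010}. I would not reprove it from scratch. Instead I would sketch the standard route: localize, flatten to special Lipschitz domains, apply Stein's universal extension operator, and glue back together. The point to stress is that a \emph{single} operator works for all $\tau\ge 0$.

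\textbf{Step 1: localization.} Since $\Omega$ is bounded with Lipschitz boundary, cover $\partial\Omega$ by finitely many open sets $U_1,\dots,U_M$. After a rigid motion, each $\Omega\cap U_i$ coincides with $U_i\cap\{x_d>\varphi_i(x')\}$ for a Lipschitz function $\varphi_i$. Add an interior set $U_0\Subset\Omega$ and choose a smooth partition of unity $\{\psi_i\}$ subordinate to $\{U_i\}$. Writing $f=\sum_i \psi_i f$ reduces the problem to extending each piece $\psi_i f$ from a special Lipschitz domain $D_i=\{x_d>\varphi_i(x')\}$. The interior piece $\psi_0 f$ is simply extended by zero.

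\textbf{Step 2: extension on a special Lipschitz domain.} Use Stein's operator
\begin{equation*}
\mathfrak{E}g(x',x_d)=\int_1^\infty g\bigl(x',x_d+\lambda\,\delta^*(x)\bigr)\,\psi(\lambda)\,d\lambda, \qquad x_d<\varphi(x'),
\end{equation*}
where $\delta^*$ is a regularized distance to $\partial D$, comparable to the true distance and with controlled derivatives. The weight $\psi$ decays rapidly and satisfies $\int_1^\infty\psi=1$ and $\int_1^\infty\lambda^k\psi=0$ for all $k\ge1$. Because all these moments vanish, one shows that $\mathfrak{E}$ is bounded $W^{m,2}(D)\to W^{m,2}(\R^d)$ simultaneously for every integer $m\ge0$. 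The construction of $\mathfrak{E}$ does not depend on $m$.

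\textbf{Step 3: fractional $\tau$ and assembly.} For non-integer $\tau$, I would interpolate. The spaces $H^\tau(D)$ and $H^\tau(\R^d)$ are complex interpolation spaces between integer-order Sobolev spaces, and the same linear operator $\mathfrak{E}$ is bounded at every integer order. Interpolation therefore gives boundedness at every $\tau\ge0$. Finally set
\begin{equation*}
Ef=\sum_i \tilde\psi_i\,\mathfrak{E}_i(\psi_i f),
\end{equation*}
with cutoffs $\tilde\psi_i\equiv1$ on $\operatorname{supp}\psi_i$. Then $Ef|_\Omega=f$, and $\|Ef\|_{H^\tau(\R^d)}\le C_\tau\|f\|_{H^\tau(\Omega)}$ follows because multiplication by smooth compactly supported functions is bounded on $H^\tau$.

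\textbf{Main obstacle.} The delicate step is Step 2: proving that the single operator $\mathfrak{E}$ is bounded on all integer orders. This requires the regularized distance estimates $|\partial^\alpha\delta^*|\lesssim\delta^{*\,1-|\alpha|}$, and the vanishing moments of $\psi$ to cancel the singular terms via Hardy-type inequalities. Since this is exactly Stein's theorem, in the paper I would simply cite it, as the authors do via \cite{Wendland:2010}.
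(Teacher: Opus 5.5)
Your proposal is correct and matches what the paper does: the paper gives no proof of its own and quotes the result from \cite[Proposition 1]{Wendland:2010}, which is Stein's universal extension theorem. Your sketch is the standard argument behind that theorem (localization to special Lipschitz domains, Stein's operator with vanishing moments bounded at every integer order, complex interpolation for fractional $\tau$), and the one mismatch, that you assume $\Omega$ bounded while the statement does not, is harmless: the paper only applies the result to bounded domains, and Stein's theorem also covers uniformly Lipschitz unbounded domains via a locally finite cover.
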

This allows us to have
\begin{equation*}
    \|f\|_{H^{\tau}(\Omega)} \le \|Ef\|_{H^{\tau}(\R^{d})} \le C_{\tau} \|f\|_{H^{\tau}(\Omega)},
\end{equation*}
i.e., defining a norm on ${H^{\tau}(\Omega)}$ via the extension operator. 
Finally, we can state
\begin{theorem}\label{thm:errorbound}
    Let $\Omega \subseteq \R^{d}$ be a bounded domain with Lipschitz boundary and $f \in H^{\tau}(\Omega)$. Under assumptions \ref{ass:pointset} and \ref{ass:kernel}, there exists a constant $C_{1}=C_{1}(\gamma)$ such that 
    \begin{equation}
       \|Ee_{j}\|_{\Phi_{j+1}} \le \alpha\|Ee_{j-1}\|_{\Phi_{j}}
    \end{equation}
    with $\alpha = C_{1}\mu^{\tau}$. Moreover,
    \begin{equation*}
        \left\| f - \sum_{\ell=1}^{L}s_{\ell} (f) \right\|_{L_{2}}\le C_{2}\left(C_{1}  \mu^{\tau} \right)^{L} \|f\|_{H^{\tau}(\Omega)}
    \end{equation*}
    for all $L\in \N$.  Thus, the multiscale approximation converges linearly in the $L_{2}$ norm if $C_{1} \mu^{\tau} < 1$.
\end{theorem}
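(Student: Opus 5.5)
The plan follows the argument of \cite[Theorem 1]{Wendland:2010}: first prove the one-step contraction for the extended errors, then iterate it and finish with a sampling inequality on the finest level. Write $e_j = e_{j-1} - \mathcal{I}_j(e_{j-1})$, where $\mathcal{I}_j$ interpolates on $X_j$ with $\Phi_j$. Since $E e_{j-1}\in H^\tau(\R^d)=\mathcal{H}_{\Phi_j}(\R^d)$, we may interpolate the extension itself. The interpolant $\mathcal{I}_j(Ee_{j-1})$ coincides with $s_j$, because it uses only data on $X_j\subset\Omega$. Hence $g_j := Ee_{j-1}-s_j$ is an extension of $e_j$ from $\Omega$. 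By the choice of norm through $E$ (minimal-norm extensions, up to the constant $C_\tau$), it suffices to bound $\|g_j\|_{\Phi_{j+1}}$. Standard RKHS theory gives $g_j\perp W_j$ in $\mathcal{H}_{\Phi_j}$, so $\|g_j\|_{\Phi_j}\le\|Ee_{j-1}\|_{\Phi_j}$, and $g_j$ vanishes on $X_j$.

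The key step is to compare the $\Phi_{j+1}$-norm with the $\Phi_j$-norm for a function vanishing on $X_j$. Using \eqref{eq:deltafourier}, I write
\begin{equation*}
\|g_j\|_{\Phi_{j+1}}^2\le c_\Phi^{-1}\int |\widehat{g_j}(\vec\omega)|^2(1+\delta_{j+1}^2\|\vec\omega\|_2^2)^{\tau}\,d\vec\omega.
\end{equation*}
I then split the integral into low frequencies $\|\vec\omega\|_2\le 1/\delta_{j+1}$ and high frequencies.
\begin{itemize}
\item On the high-frequency part, $(1+\delta_{j+1}^2\|\vec\omega\|_2^2)^\tau\le 2^\tau(\delta_{j+1}/\delta_j)^{2\tau}(1+\delta_j^2\|\vec\omega\|_2^2)^\tau$. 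Since $\delta_{j+1}/\delta_j=h_{j+1}/h_j\le\mu$, this part is bounded by $C\mu^{2\tau}\|g_j\|_{\Phi_j}^2$.
\item The low-frequency part is controlled by $C\|g_j\|_{L_2(\R^d)}^2+C\delta_{j+1}^{2\tau}|g_j|_{H^\tau(\R^d)}^2$.
\end{itemize}
For the $L_2$ term, I use that $g_j$ vanishes on $X_j$ together with the sampling inequality $\|g\|_{L_2(\Omega)}\le C h_j^\tau\|g\|_{H^\tau(\Omega)}$. I then pass from $\Omega$ to $\R^d$ through the extension operator of Proposition \ref{prop:extension}, replacing $g_j$ by $E(g_j|_\Omega)$ as Wendland does. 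Finally I use $\|g\|_{H^\tau}\le C_\Phi^{1/2}\delta_j^{-\tau}\|g\|_{\Phi_j}$. This gives
\begin{equation*}
\|g_j\|_{L_2}^2\le C (h_j/\delta_j)^{2\tau}\|g_j\|_{\Phi_j}^2=C\nu^{-2\tau}\|g_j\|_{\Phi_j}^2,
\end{equation*}
and $\nu^{-1}\le h_1\le\mu\gamma^{-1}\cdot$const. Here the range $\nu\ge 1/h_1$ and $\nu\le\gamma/\mu$ in \eqref{eq:deltadef} are what make $\nu^{-\tau}$ comparable to $(\mu/\gamma)^{\tau}$ up to $\gamma$-dependent constants, and this is why $C_1$ depends on $\gamma$. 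The last term is bounded in the same way by $(\delta_{j+1}/\delta_j)^{2\tau}\le\mu^{2\tau}$. Collecting the three contributions yields $\|Ee_j\|_{\Phi_{j+1}}\le C_1\mu^\tau\|Ee_{j-1}\|_{\Phi_j}$.

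For the second statement, I iterate the contraction to get $\|Ee_L\|_{\Phi_{L+1}}\le\alpha^L\|Ee_0\|_{\Phi_1}=\alpha^L\|Ef\|_{\Phi_1}$. By the norm equivalence, $\|Ef\|_{\Phi_1}\le c_\Phi^{-1/2}C_\tau\|f\|_{H^\tau(\Omega)}$. I then bound
\begin{equation*}
\|e_L\|_{L_2(\Omega)}\le\|Ee_L\|_{L_2}\le C_\Phi^{1/2}\|Ee_L\|_{\Phi_{L+1}},
\end{equation*}
using $\widehat{\Phi_\delta}\le C_\Phi$ in \eqref{eq:deltafourier}. Alternatively, applying the sampling inequality once more would even yield an extra factor $\delta_L^{\tau}$. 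Together these give the bound with $C_2$, and linear convergence follows when $C_1\mu^\tau<1$.

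The main obstacle is the $L_2$ step. The sampling inequality holds on $\Omega$, while the norms are on $\R^d$, so the estimate must be routed through the extension operator. The constants must remain independent of the level, which requires the quasi-uniformity \eqref{eq:quasiuniform} and $h_j\le h_0$ so that the sampling inequality applies uniformly across levels.
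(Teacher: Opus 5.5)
Your route is the one of Wendland's Theorem~1, which is all the paper relies on here, since it gives no proof of its own. However, the step that converts $\nu^{-\tau}$ into $\mu^{\tau}$ is wrong. From $1/h_1\le\nu\le\gamma/\mu$ you get $\mu/\gamma\le\nu^{-1}\le h_1$, hence $h_1\ge\mu/\gamma$. This is the opposite of your claim $h_1\le\mu\gamma^{-1}\cdot\mathrm{const}$: an upper bound on $\nu$ can only bound $\nu^{-\tau}$ from below. So your estimate actually yields $\alpha=C(\mu^{\tau}+\nu^{-\tau})\le C(\mu^{\tau}+h_1^{\tau})$. Nothing in the hypotheses ties $h_1$ to $\mu$, so this is not of the form $C_1(\gamma)\mu^{\tau}$.

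What is needed is the lower bound $\nu\ge\gamma/\mu$. This is how Wendland's hypothesis reads, $\gamma/\mu\le\nu\le 1/h_1$; the endpoints in \eqref{eq:deltadef} appear to be swapped. With it, $\nu^{-\tau}\le\gamma^{-\tau}\mu^{\tau}$ and $C_1=C(1+\gamma^{-\tau})$, which is where the $\gamma$-dependence really comes from. The other endpoint, $\nu\le 1/h_1$, gives $\delta_j\le\delta_1\le 1$, and you need this too. The inequalities $\|g\|_{H^{\tau}(\R^d)}\le C_\Phi^{1/2}\delta^{-\tau}\|g\|_{\Phi_\delta}$ and $\|g\|_{\Phi_\delta}\le c_\Phi^{-1/2}\|g\|_{H^{\tau}(\R^d)}$, which you use for $\|g_j\|_{H^\tau}$ and for $\|Ef\|_{\Phi_1}$, hold with these constants only for $\delta\le 1$. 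The range as you read it, $\nu\ge 1/h_1$, forces $\delta_1\ge 1$.

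There is also a bookkeeping problem. The reduction to $\|g_j\|_{\Phi_{j+1}}$ cannot succeed on its own. Its low-frequency part is $\|g_j\|_{L_2(\R^d)}$, and the sampling inequality gives no small factor on $\R^d\setminus\Omega$, where $g_j=Ee_{j-1}-s_j$ is not controlled. Replacing $g_j$ by $E(g_j|_\Omega)=Ee_j$ is the right remedy, but then your high-frequency bound via $\|g_j\|_{\Phi_j}\le\|Ee_{j-1}\|_{\Phi_j}$ concerns a different function. Work with $Ee_j$ throughout and bring in $g_j$ only at the end, as Wendland does:
\begin{align*}
\|Ee_j\|_{\Phi_{j+1}}^{2}&\le C\bigl(\|e_j\|_{L_2(\Omega)}^{2}+\delta_{j+1}^{2\tau}\|e_j\|_{H^{\tau}(\Omega)}^{2}\bigr)\le C\bigl(h_j^{2\tau}+\delta_{j+1}^{2\tau}\bigr)\|g_j\|_{H^{\tau}(\R^d)}^{2}\\
&\le C\bigl(h_j^{2\tau}+\delta_{j+1}^{2\tau}\bigr)\delta_j^{-2\tau}\|Ee_{j-1}\|_{\Phi_j}^{2}\le C\bigl(\nu^{-2\tau}+\mu^{2\tau}\bigr)\|Ee_{j-1}\|_{\Phi_j}^{2}.
\end{align*}
The chain uses, in order:
\begin{itemize}
\item the fact that the same $E$ is bounded on $L_2$ and on $H^{\tau}$;
\item the sampling inequality for $e_j$, which vanishes on $X_j$;
\item $\|e_j\|_{H^{\tau}(\Omega)}\le\|g_j\|_{H^{\tau}(\R^d)}$;
\item the inverse norm equivalence with $\delta_j\le 1$;
\item $g_j\perp W_j$.
\end{itemize}
No comparison of $E$ with minimal-norm extensions in the scaled norms is then needed.

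The iteration and the final estimate $\|e_L\|_{L_2(\Omega)}\le\|Ee_L\|_{L_2(\R^d)}\le C_\Phi^{1/2}\|Ee_L\|_{\Phi_{L+1}}$ are fine. Your aside about an extra factor $\delta_L^{\tau}$ is not right, though you do not need it: the $h_L^{\tau}$ from the sampling inequality is cancelled by the $\delta^{-\tau}$ of the inverse norm equivalence, leaving only a constant.
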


Additionally, we have

\begin{lemma}\label{lem:classic-infbound}
    Let $\Omega \subseteq \R^{d}$ be a bounded domain with Lipschitz boundary and $f \in H^{\tau}(\Omega)$. Under assumptions \ref{ass:pointset} and \ref{ass:kernel}, for all $L\in \N$, we have
    \begin{equation}\label{eq:classic-infbound}
        \| e_{\ell} (f) \|_{L_{\infty}(\Omega)} \le C_{2}(C_{1}  \mu^{\tau-d/2} )^{L} \|f\|_{H^{\tau}(\Omega)},
    \end{equation}
    where constants $C_{1}, C_{2}$ stems from \ref{thm:errorbound}.  
\end{lemma}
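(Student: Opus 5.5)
Since $e_L|_\Omega = (Ee_L)|_\Omega$, we have $\|e_L\|_{L_\infty(\Omega)} \le \|Ee_L\|_{L_\infty(\mathbb{R}^d)}$. So the plan is to bound the sup norm of the extended error by its native-space norm for the scaled kernel $\Phi_{L+1}$, and then use the contraction from Theorem~\ref{thm:errorbound}. (The statement writes $e_\ell$ on the left and $L$ on the right; we read it as $\ell = L$, i.e.\ the error after $L$ levels.)

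\textbf{Step 1 (sup norm via Fourier inversion).} For $g \in \mathcal{H}_{\Phi_\delta}(\mathbb{R}^d)$, Fourier inversion and Cauchy--Schwarz give
\begin{equation*}
\|g\|_{L_\infty} \le (2\pi)^{-d/2}\int |\widehat g|
\le (2\pi)^{-d/2}\Big(\int \frac{|\widehat g|^2}{\widehat{\Phi_\delta}}\Big)^{1/2}\Big(\int \widehat{\Phi_\delta}\Big)^{1/2}.
\end{equation*}
By \eqref{eq:deltafourier}, after the substitution $\vec\eta = \delta\vec\omega$,
\begin{equation*}
\int \widehat{\Phi_\delta} \le C_\Phi\,\delta^{-d}\int (1+\|\vec\eta\|_2^2)^{-\tau}\,d\vec\eta ,
\end{equation*}
and the last integral is finite because $\tau > d/2$. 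Hence $\|g\|_{L_\infty} \le C\,\delta^{-d/2}\|g\|_{\Phi_\delta}$. Alternatively one can simply use the reproducing property: $|g(x)| \le \|g\|_{\Phi_\delta}\,\Phi_\delta(0)^{1/2}$ with $\Phi_\delta(0) = \delta^{-d}\Phi(0)$.

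\textbf{Step 2 (contraction).} Apply Step 1 with $g = Ee_L$ and $\delta = \delta_{L+1}$. Iterating Theorem~\ref{thm:errorbound} gives
\begin{equation*}
\|Ee_L\|_{\Phi_{L+1}} \le \alpha^L \|Ee_0\|_{\Phi_1} = \alpha^L\|Ef\|_{\Phi_1}.
\end{equation*}
By the norm equivalence, $\|Ef\|_{\Phi_1} \le c_\Phi^{-1/2}\|Ef\|_{H^\tau(\mathbb{R}^d)}$, and by Proposition~\ref{prop:extension} this is at most $c_\Phi^{-1/2} C_\tau \|f\|_{H^\tau(\Omega)}$.

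\textbf{Step 3 (the $\delta^{-d/2}$ factor).} This is the only delicate point: we must convert $\delta_{L+1}^{-d/2}$ into the factor $\mu^{-Ld/2}$. By \eqref{eq:deltadef} and Assumption~\ref{ass:pointset},
\begin{equation*}
\delta_{L+1} = \nu h_{L+1} \ge \nu c_h \mu^{L+1} h_0 ,
\end{equation*}
so
\begin{equation*}
\delta_{L+1}^{-d/2} \le (\nu c_h \mu h_0)^{-d/2}\mu^{-Ld/2}.
\end{equation*}
The prefactor is bounded independently of $L$ because $\nu \ge 1/h_1$ gives $\nu h_0 \ge 1/\mu$. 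Combining, we get
\begin{equation*}
\|e_L\|_{L_\infty(\Omega)} \le C\,(C_1\mu^\tau)^L\mu^{-Ld/2}\|f\|_{H^\tau(\Omega)} \le C\,(C_1\mu^{\tau-d/2})^L\|f\|_{H^\tau(\Omega)},
\end{equation*}
with all $L$-independent constants absorbed into $C_2$, possibly enlarged relative to Theorem~\ref{thm:errorbound}. Only the constant tracking in this step needs care; it is routine.
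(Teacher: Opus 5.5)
Your proof is correct, and it takes a different route from the paper. The paper goes through the Sobolev embedding $H^{\iota}(\Omega)\hookrightarrow L_\infty(\Omega)$ for some $d/2<\iota<\tau$. It then uses a zero-point (sampling) estimate, based on $e_L|_{X_L}=0$, to gain a factor $h_L^{\tau-\iota}$, and the norm equivalence $\|Ee_L\|_{H^\tau(\R^d)}\le C_\Phi^{1/2}\delta_{L+1}^{-\tau}\|Ee_L\|_{\Phi_{L+1}}$, before invoking the contraction of Theorem~\ref{thm:errorbound}.

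You bypass the sampling inequality and the zeros of $e_L$ entirely. You use only the scaled embedding $\|g\|_{L_\infty}\le C\delta^{-d/2}\|g\|_{\Phi_\delta}$, and you extract its $\delta$-dependence correctly from \eqref{eq:deltafourier} via $\vec{\eta}=\delta\vec{\omega}$. This gives the stated rate exactly: your loss is $\delta_{L+1}^{-d/2}\lesssim\mu^{-Ld/2}$, hence $(C_1\mu^{\tau-d/2})^L$.

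The paper's chain as written only delivers $h_L^{\tau-\iota}\delta_{L+1}^{-\tau}(C_1\mu^{\tau})^L\lesssim(C_1\mu^{\tau-\iota})^L$ with $\iota>d/2$. Its last step, bounding $(C_1\mu^{\tau-\iota-1/L})^L$ by $(C_1\mu^{\tau-d/2})^L$, goes the wrong way, since $\mu<1$ and $\tau-\iota<\tau-d/2$. Repairing it requires the sharp $L_\infty$ sampling inequality with factor $h_L^{\tau-d/2}$, which then reproduces exactly your $\mu^{-Ld/2}$ loss. Because the contraction is measured in the $\delta_{L+1}$-scaled norm, the zeros of $e_L$ give no extra gain here, so your argument is the more economical one.

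Two cosmetic remarks. First, Assumption~\ref{ass:pointset} only gives $h_{L+1}\ge c_h\mu h_L\ge c_h^2\mu^{L+1}h_0$, so your prefactor is $c_h^{-d}$ rather than $c_h^{-d/2}$. Second, as you say, the final constant is not literally the $C_2$ of Theorem~\ref{thm:errorbound}; the paper's own proof also ends with a new constant.
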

\begin{proof}
    Due to the Sobolev embedding theorem and the zero-point estimate, we have
    \begin{align*}
        \| e_{\ell} (f) \|_{L_{\infty}(\Omega)}& \le C_{\Omega} \| e_{\ell} (f) \|_{H^{\iota}(\Omega)} \\
        & \le C_{\Omega} h_{\ell}^{\tau-\iota} \| e_{\ell} (f) \|_{H^{\tau}(\Omega)}
    \end{align*}
    for every $\tau > \iota > d/2$. From here, standard argument from \cite{Wendland:2010} leads to
    \begin{align*}
        \| e_{L} (f) &\|_{L_{\infty}(\Omega)} \le C_{\Omega} h_{L}^{\tau-\iota} \| e_{L} (f) \|_{H^{\iota}(\Omega)} \\
        & \le C_{\Omega} h_{L}^{\tau-\iota} \| Ee_{L} (f) \|_{H^{\iota}(\R^{d})} \\
        & \le C_{\Omega} C_{\Phi}^{1/2} h_{L}^{\tau-\iota} \delta_{L+1}^{-\tau}\| Ee_{L} (f) \|_{\Phi_{L+1}} \\
        & \le C_{\Omega} C_{\Phi}^{1/2} h_{L}^{\tau-\iota} \delta_{L+1}^{-\tau} (C_{1}\mu^{\tau})^{L} \| Ef \|_{\Phi_{1}} \\
        & \le C_{\Omega} (C_{\Phi}/c_{\Phi})^{1/2} h_{L}^{\tau-\iota} \delta_{L+1}^{-\tau} (C_{1}\mu^{\tau})^{L} \| Ef \|_{H^{\tau}(\R^{d})} \\
        & \le c_{\ref{eq:classic-infbound}} (C_{1}\mu^{\tau-\iota-1/L})^{L} \| Ef \|_{H^{\tau}(\R^{d})}  \le c_{\ref{eq:classic-infbound}} (C_{1}\mu^{\tau-d/2})^{L} \| f \|_{H^{\tau}(\Omega)},
    \end{align*}
    where we collected constants with $c_{\ref{eq:classic-infbound}} = C_{\Omega} (C_{\Phi}/c_{\Phi})^{1/2} c_{h}^{-\tau} h_{0}^{-\iota}$. 
\end{proof}

We recall that in our settings, we have $2\tau = d+2k+1$, which implies that $\tau  > d/2$ for every choice of Wendland function. 
We further introduce the cardinal functions 
\begin{align}\label{eq:cardinal}
\chi^{(\ell)}_{n} \in W_{\ell} \quad \text{s.t. }\chi^{(\ell)}_{n}(\vec{x}^{(\ell)}_{m}) = \delta_{n,m} \quad \forall \, 1\le n,m\le N(\ell).
\end{align} 
We observe the identity
\begin{equation*}
A_{\ell, \ell} 
\left(\begin{array}{c}
    \chi_{1}^{(\ell)}(\vec{x}) \\
    \chi_{2}^{(\ell)}(\vec{x}) \\
    \vdots \\
    \chi_{N(\ell)}^{(\ell)}(\vec{x})
\end{array}\right) = 
\left(\begin{array}{c}
    \Phi_{\ell}(\vec{x}, \vec{x}^{(\ell)}_{1}) \\
    \Phi_{\ell}(\vec{x}, \vec{x}^{(\ell)}_{2}) \\
    \vdots \\
    \Phi_{\ell}(\vec{x}, \vec{x}^{(\ell)}_{N(\ell)})
\end{array}\right) ,
\end{equation*}
from which it follows that
\begin{equation}
    \chi_{i}^{(\ell)}(\vec{x}) = \sum\nolimits_{k=1}\nolimits^{N(\ell)} (A^{-1}_{\ell, \ell})_{ik} \Phi_{\ell}(\vec{x}, \vec{x}^{(\ell)}_{k}) = \sum\nolimits_{k: \|\vec{x}-\vec{x}^{(\ell)}_{k}\|_{2} \le \delta_{\ell}} 
    (A^{-1}_{\ell, \ell})_{ik} \Phi_{\ell}(\vec{x}, \vec{x}^{(\ell)}_{k})
    \label{eq:chi}
\end{equation}
holds for all $\vec{x} \in \R^{d}$.
We can then recast \eqref{eq:approximant} in the following form
\begin{equation}
    s_{\ell} (f)= \sum_{n=1}^{N(\ell)} e^{(\ell)}(\vec{x}_{n}^{(\ell)}) \chi^{(\ell)}_{n}.
\end{equation}
Since our motivation is to leverage the proprieties of the cardinals, we make use of this formulation frequently.
We also need the following result from \cite[Lemma 8]{Lot:Rieger:2026}:

\begin{lemma}\label{lem:lagrangedecay}
    Let the usual assumptions on $\Phi_{\ell}$ and $X_{\ell}$ be satisfied. Then, there is $\theta>0$ such that for all $1 \le i,j \le N(\ell))$ and all $1\le \ell \le L$, the bound
    \begin{equation}\label{eq:lagrangedecayA}
        \left|\left(A_{\ell, \ell}^{-1}\right)_{ij}\right| \le 2(C_{d}c_{\Phi})^{-1} \left(1 +4 M_{d}^{2} \nu^{2} c_{q}^{2}\right)^{\tau} e^{2 \theta \sqrt{d}} q_{\ell}^{d} e^{-\theta \|\vec{x}_{i}^{(\ell)}-\vec{x}_{j}^{(\ell)}\|_{2}/q_{\ell}}
    \end{equation}
    holds.
    Moreover, it holds that
    \begin{equation}\label{eq:lagrangedecay}
        \left| \chi_{i}^{(\ell)}(\vec{y})\right| \le 2(C_{d}c_{\Phi})^{-1} \left(1 +4 M_{d}^{2} \nu^{2} c_{q}^{2}\right)^{\tau} e^{2 \theta \sqrt{d}} e^{c_{q} \nu \theta} (1+\nu c_{q})^{d}e^{-\theta\|\vec{x}_{i}^{(\ell)}-\vec{y}\|_{2}/q_{\ell}},
    \end{equation}
    where the constants $C_{d},M_d$ come from the bounds on the condition number of $A_{\ell,\ell}$.
\end{lemma}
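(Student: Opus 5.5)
The plan is the standard route for exponential decay of Lagrange functions of compactly supported kernels. Write $A:=A_{\ell,\ell}$. Because $\Phi_\ell$ is supported in $B_{\delta_\ell}(0)$, $A$ is a banded (sparse) symmetric positive definite matrix: $A_{ij}=0$ whenever $\|\vec{x}^{(\ell)}_i-\vec{x}^{(\ell)}_j\|_2>\delta_\ell=\nu h_\ell\le \nu c_q q_\ell$.

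\textbf{Step 1: spectral bounds.} I would first establish the two-sided bound $\lambda_{\min}(A)\ge C_d c_\Phi\, q_\ell^{-d}\,(1+4M_d^2\nu^2c_q^2)^{-\tau}$ and $\lambda_{\max}(A)\le C\,q_\ell^{-d}$.
\begin{itemize}
\item The lower bound comes from the Fourier characterization \eqref{eq:deltafourier}: the standard Narcowich--Ward/Wendland argument bounds $\lambda_{\min}$ below by $c_\Phi\,\delta_\ell^{-d}\,(1+\delta_\ell^2 M_d^2/q_\ell^2)^{-\tau}\cdot C\,(\delta_\ell/q_\ell)^{d}$.
\item Using $\delta_\ell/q_\ell\le\nu c_q$, this produces exactly the factor $(1+4M_d^2\nu^2c_q^2)^{\tau}$ appearing in the statement.
\item The upper bound follows from Gershgorin. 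Quasi-uniformity gives a uniformly bounded number, depending only on $(1+\nu c_q)^d$, of nonzeros per row, and each entry satisfies $|\Phi_\ell|\le \delta_\ell^{-d}\|\Phi\|_\infty$.
\end{itemize}
Hence the condition number $\kappa$ is bounded independently of $\ell$.

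\textbf{Step 2: decay of $A^{-1}$.} Here I would invoke the Demko--Moss--Smith theorem for banded positive definite matrices. Equip the index set with the graph distance $\rho(i,j)$, the minimal number of hops of length $\le\delta_\ell$ joining $\vec{x}_i$ to $\vec{x}_j$. Polynomial approximation of $1/t$ on $[\lambda_{\min},\lambda_{\max}]$, via Chebyshev polynomials of degree $m$, yields
\[
|(A^{-1})_{ij}|\le \frac{2}{\lambda_{\min}}\, r^{\rho(i,j)},\qquad r=\frac{\sqrt{\kappa}-1}{\sqrt{\kappa}+1}<1 .
\]
Since $\rho(i,j)\ge \|\vec{x}_i-\vec{x}_j\|_2/\delta_\ell\ge \|\vec{x}_i-\vec{x}_j\|_2/(\nu c_q q_\ell)$, setting $\theta:=|\log r|/(\nu c_q)$ gives \eqref{eq:lagrangedecayA}. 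The prefactor $e^{2\theta\sqrt d}$ absorbs the rounding or off-by-one losses in the hop count. This is the step I expect to be the main obstacle. The difficulty is not conceptual but lies in tracking the constants carefully enough to land exactly on the stated prefactor, and in handling the graph distance correctly when the point cloud is only quasi-uniform rather than gridded.

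\textbf{Step 3: decay of $\chi_i^{(\ell)}$.} For \eqref{eq:lagrangedecay} I would use \eqref{eq:chi}: only indices $k$ with $\|\vec{y}-\vec{x}_k\|_2\le\delta_\ell$ contribute, and the number of such indices is at most $(1+\nu c_q)^d$ up to a constant by a packing argument.
\begin{itemize}
\item For each contributing $k$, combine the bound on $|(A^{-1})_{ik}|$ with $|\Phi_\ell(\vec{y},\vec{x}_k)|\le \delta_\ell^{-d}\Phi(0)$. The factor $q_\ell^d\delta_\ell^{-d}$ is then bounded by a constant.
\item Use the triangle inequality $\|\vec{x}_i-\vec{x}_k\|_2\ge\|\vec{x}_i-\vec{y}\|_2-\delta_\ell$. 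This converts $e^{-\theta\|\vec{x}_i-\vec{x}_k\|_2/q_\ell}$ into $e^{\theta\nu c_q}\,e^{-\theta\|\vec{x}_i-\vec{y}\|_2/q_\ell}$, using $\delta_\ell/q_\ell\le\nu c_q$.
\end{itemize}
Multiplying the count, the entry bound and this shift factor gives the claimed estimate.
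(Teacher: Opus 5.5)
The paper does not prove this lemma; it imports it from \cite[Lemma 8]{Lot:Rieger:2026}. Your argument is correct, and the constants in the statement are exactly the ones your route produces.
\begin{itemize}
\item The prefactor is $2/\lambda_{\min}$ from the Chebyshev/Demko--Moss--Smith estimate, since $(1+\sqrt{\kappa})^2/(2\lambda_{\max})\le 2/\lambda_{\min}$.
\item This is combined with Wendland's bound $\lambda_{\min}\ge C_d c_\Phi(1+4M_d^2\nu^2c_q^2)^{-\tau}q_\ell^{-d}$. The factor $4$, which is missing from your intermediate expression, comes from taking the infimum of $\widehat{\Phi_{\delta_\ell}}$ over $\|\vec{\omega}\|_2\le 2M_d/q_\ell$.
\item Step 3 supplies $(1+\nu c_q)^d$ and $e^{c_q\nu\theta}$ exactly as you describe.
\end{itemize}

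What you flag as the main obstacle is not one. With your graph distance, $\rho(i,j)\ge\|\vec{x}_i^{(\ell)}-\vec{x}_j^{(\ell)}\|_2/\delta_\ell$ holds for an arbitrary point set, with no gridding or rounding. You therefore get $|(A^{-1}_{\ell,\ell})_{ij}|\le \frac{2}{\lambda_{\min}}e^{-\theta\|\vec{x}_i^{(\ell)}-\vec{x}_j^{(\ell)}\|_2/q_\ell}$ with $\theta=|\log r|/(\nu c_q)$. This is \eqref{eq:lagrangedecayA} without the factor $e^{2\theta\sqrt{d}}$, and since that factor is $\ge 1$ nothing has to be absorbed. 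To make $\theta$ independent of $\ell$, as the statement requires, use the $\ell$-independent bound on the condition number from Step 1 in place of $\kappa_\ell$; this works because $r$ is increasing in $\kappa$.

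One correction is needed in Step 3. Multiplying your factors leaves an extra factor $\Phi(0)\,q_\ell^d\delta_\ell^{-d}\le\Phi(0)\nu^{-d}$ that does not appear in \eqref{eq:lagrangedecay}. The packing count is exactly $(1+\delta_\ell/q_\ell)^d\le(1+\nu c_q)^d$, with no further constant, so nothing else offsets this factor. You therefore reach the stated constant only under a normalization such as $\Phi(0)\le\nu^{d}$, for instance $\Phi(0)=1$ and $\nu\ge 1$.

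The paper uses this normalization tacitly: in the proof of Lemma \ref{lem:localcardinal-inf} it effectively bounds $\|\Phi_\ell(\vec{x}_j^{(\ell)},\cdot)\|_{L_\infty(\Omega)}$ by $\delta_\ell^{-d}$. Some such normalization is unavoidable. Replacing $\Phi$ by $\lambda\Phi$ leaves $\chi_i^{(\ell)}$ unchanged but multiplies $c_\Phi$ by $\lambda$, so a scale-invariant constant in \eqref{eq:lagrangedecay} must carry $\Phi(0)/c_\Phi$ rather than $1/c_\Phi$. State this assumption explicitly instead of claiming the exact constant.
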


We compact the constant of those results to achieve a clean representation, where the interested reader can always check back to the original result their details. Thus, we introduce 
\begin{align*}
    c_{\ref{eq:lagrangedecayA}} & = 2(C_{d}c_{\Phi})^{-1} \left(1 +4 M_{d}^{2} \nu^{2} c_{q}^{2}\right)^{\tau} e^{2 \theta \sqrt{d}}, \\
    c_{\ref{eq:lagrangedecay}} & = 2(C_{d}c_{\Phi})^{-1} \left(1 +4 M_{d}^{2} \nu^{2} c_{q}^{2}\right)^{\tau} e^{2 \theta \sqrt{d}} e^{c_{q} \nu \theta} (1+\nu c_{q})^{d}.
\end{align*}

Following the general pattern of \cite{Hangelbroek:etal:2010,Hangelbroek:etal:2011}, we can derive a bound on the Lebesgue constant.  See also \cite[Cor. 2.5]{demarchi:wendland:2020} for a bound on the Lebesgue constant for compactly supported kernels. 
We carry out the computations in order to keep track of the multiple involved constants.
\begin{corollary}\label{cor:lagrangedecaysum}
    Following the hypothesis of Lemma \ref{lem:lagrangedecay}, we have
    \begin{equation} \label{eq:lagrangedecayAsum}
        \sum_{i,j = 1}^{N(\ell)} \left|\left(A_{\ell, \ell}^{-1}\right)_{ij}\right| \le c_{\ref{eq:lagrangedecayAsum}},
    \end{equation}
    \begin{equation} \label{eq:lagrangedecaysum}
        \sum_{i = 1}^{N(\ell)} \left|\chi^{(\ell)}_{i}(\vec{y})\right| \le c_{\ref{eq:lagrangedecaysum}},  \qquad \vec{y} \in \R^{d},
    \end{equation}
    for every $1 \le \ell \le L$.
\end{corollary}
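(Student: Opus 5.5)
Both bounds follow from the pointwise decay estimates of Lemma \ref{lem:lagrangedecay} combined with a shell-counting (packing) argument based on the separation distance $q_\ell$. I would first prove the geometric core estimate
\begin{equation*}
\sum_{i=1}^{N(\ell)} e^{-\theta\|\vec{x}_i^{(\ell)}-\vec{y}\|_2/q_\ell} \le c_{\theta,d}, \qquad \vec{y}\in\R^d,
\end{equation*}
with $c_{\theta,d}$ independent of $\ell$, $N(\ell)$ and $\vec{y}$. Then \eqref{eq:lagrangedecaysum} follows immediately from \eqref{eq:lagrangedecay}, with $c_{\ref{eq:lagrangedecaysum}} = c_{\ref{eq:lagrangedecay}}\, c_{\theta,d}$.

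For the core estimate, fix $\vec{y}$ and partition the points into shells
\begin{equation*}
S_n=\{ i : n q_\ell \le \|\vec{x}_i^{(\ell)}-\vec{y}\|_2 < (n+1)q_\ell\}, \qquad n\ge 0.
\end{equation*}
The balls $B_{q_\ell}(\vec{x}_i^{(\ell)})$ are pairwise disjoint by the definition of $q_\ell$. For $i\in S_n$ they are contained in the annulus $\{(n-1)q_\ell \le \|\vec{z}-\vec{y}\|_2 < (n+2)q_\ell\}$. Comparing volumes, the factors $q_\ell^d$ cancel and give
\begin{equation*}
\#S_n \le \frac{(n+2)^d-\max(n-1,0)^d}{1} \le C_d (n+1)^{d-1}.
\end{equation*}
Hence
\begin{equation*}
\sum_i e^{-\theta\|\vec{x}_i^{(\ell)}-\vec{y}\|_2/q_\ell} \le C_d\sum_{n\ge0}(n+1)^{d-1}e^{-\theta n} =: c_{\theta,d} < \infty.
\end{equation*}
This step is elementary but requires keeping track that nothing depends on $\ell$.

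For \eqref{eq:lagrangedecayAsum}, I would use \eqref{eq:lagrangedecayA} and, for each fixed $j$, apply the core estimate with $\vec{y}=\vec{x}_j^{(\ell)}$. This yields
\begin{equation*}
\sum_{i,j}\bigl|(A_{\ell,\ell}^{-1})_{ij}\bigr| \le c_{\ref{eq:lagrangedecayA}}\, c_{\theta,d}\, N(\ell)\, q_\ell^d.
\end{equation*}
The remaining issue, and the only real obstacle, is to bound $N(\ell)q_\ell^d$ uniformly in $\ell$. Summing over the $i$ index alone gives only $O(1)$ per row, so the factor $q_\ell^d$ from \eqref{eq:lagrangedecayA} must compensate for the number of rows.

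I would handle this by the same packing argument. The disjoint balls $B_{q_\ell}(\vec{x}_j^{(\ell)})$ lie in the bounded set $\Omega_{q_\ell} \subseteq \{\vec{z} : \dist(\vec{z},\Omega)\le 1\}$, using $q_\ell\le h_\ell<1$. Therefore $N(\ell)\,\omega_d q_\ell^d \le \operatorname{vol}(\Omega_1)$, which gives a uniform bound. Alternatively, one can invoke \eqref{eq:Nmu2} together with $q_\ell \le h_\ell\le \mu^\ell h_0$, so that $N(\ell)q_\ell^d\le C_{\#}h_0^d$. Either route yields
\begin{equation*}
c_{\ref{eq:lagrangedecayAsum}} = c_{\ref{eq:lagrangedecayA}}\, c_{\theta,d}\, C_{\#} h_0^d,
\end{equation*}
independent of $\ell$, which completes the plan.
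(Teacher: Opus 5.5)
Your proposal is correct and follows the paper's proof: the decay bounds of Lemma \ref{lem:lagrangedecay} combined with the Narcowich--Ward annuli counting argument, where the factors $q_\ell^d$ cancel in the volume comparison, plus a volume-comparison bound showing that $N(\ell)q_\ell^d$ is bounded uniformly in $\ell$ (the paper writes this step as $N(\ell)\le N(1)(h_1/q_\ell)^d$). Your shell count $\#S_n\le C_d(n+1)^{d-1}$ is slightly sharper than the paper's count $(m+2)^d$, and your justification of the uniform bound on $N(\ell)q_\ell^d$ (by packing, or via \eqref{eq:Nmu2}) is stated more carefully, but the argument is otherwise identical.
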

\begin{proof}
    Defining 
    \begin{equation*}
		\mathcal{A}^{(\ell)}_{\vec{x}}(m):= B_{(m+1) q_{\ell}}(\vec{x})\setminus B_{m q_{\ell}}(\vec{x}),
	\end{equation*} 
    we can exploit the annuli-sum trick (\cite{Narcowich:Ward:1991}) and Equation \ref{eq:lagrangedecayA}, to compute
    \begin{align*}
        \sum_{i,j = 1}^{N(\ell)} \left|\left(A_{\ell, \ell}^{-1}\right)_{ij}\right| & \le \sum_{i,j = 1}^{N(\ell)} c_{\ref{eq:lagrangedecayA}} q_{\ell}^{d} e^{-\theta \|\vec{x}^{(\ell)}_{i}-\vec{x}^{(\ell)}_{j}\|_{2}/q_{\ell}} \\
        & \le c_{\ref{eq:lagrangedecayA}} q_{\ell}^{d} \sum_{i = 1}^{N(\ell)} \sum_{m=0}^{\infty} \sum_{\vec{x}^{(\ell)}_{j} \in X_{\ell} \cap \mathcal{A}^{(\ell)}_{\vec{x}^{(\ell)}_{i}}(m)} e^{-\theta \|\vec{x}^{(\ell)}_{i}-\vec{x}^{(\ell)}_{j}\|_{2}/q_{\ell}} \\
         &\le c_{\ref{eq:lagrangedecayA}} q_{\ell}^{d} \sum_{i = 1}^{N(\ell)} \sum_{m=0}^{\infty} \#(X_{\ell} \cap \mathcal{A}^{(\ell)}_{\vec{x}^{(\ell)}_{i}}(m)) e^{-\theta m},
    \end{align*}
    where using a volume comparison argument to estimate the number of points, we get
    \begin{align*}
       & \sum_{i,j = 1}^{N(\ell)} \left|\left(A_{\ell, \ell}^{-1}\right)_{ij}\right|  \le c_{\ref{eq:lagrangedecayA}} q_{\ell}^{d} \sum_{i = 1}^{N(\ell)} \sum_{m=0}^{\infty} (1+(m+1)q_{\ell}/q_{\ell})^{d} e^{-\theta m} \\
       & \le c_{\ref{eq:lagrangedecayA}} q_{\ell}^{d} \#{N(\ell)} \sum_{m=0}^{\infty} (2+m)^{d} e^{-\theta m} 
         \le c_{\ref{eq:lagrangedecayA}}N(1)(h_{1}/q_{\ell})^{d}  q_{\ell}^{d} c_{\Sigma}
         = c_{\ref{eq:lagrangedecayA}}N(1)h_{1}^{d} c_{\Sigma} = c_{\ref{eq:lagrangedecayAsum}}.
    \end{align*}
    Similarly, we have 
    \begin{align*}
       & \sum_{i = 1}^{N(\ell)} \left|\chi^{(\ell)}_{i}(\vec{y})\right|  \le \sum_{i = 1}^{N(\ell)} c_{\ref{eq:lagrangedecay}} e^{-\theta\|\vec{x}_{i}^{(\ell)}-\vec{y}\|_{2}/q_{\ell}} 
         \le c_{\ref{eq:lagrangedecay}} \sum_{m=0}^{\infty} \sum_{\vec{x}^{(\ell)}_{i} \in X_{\ell} \cap \mathcal{A}^{(\ell)}_{\vec{y}}(m)} e^{-\theta \|\vec{x}^{(\ell)}_{i}-\vec{y}\|_{2}/q_{\ell}} \\
        & \le c_{\ref{eq:lagrangedecay}} \sum_{m=0}^{\infty} (2+m)^{d} e^{-\theta m} = c_{\ref{eq:lagrangedecaysum}}
    \end{align*}
\end{proof}

We now introduce a variation for the cardinal function that has a lower computational cost.

In the remainder of the article, we use the notation 
\begin{equation}\label{eq:Bellxr}
    B^{(\ell)}(\vec{x}; r) := X_{\ell} \cap B(\vec{x}; r) = \{\vec{x}^{(\ell)} \in X_{\ell} \, : \, \|\vec{x}^{(\ell)}-\vec{x}\|_{2} \le r \} \subseteq X_{\ell}.
\end{equation}
Following, e.g., \cite{Fuselier:etal:2013} on the sphere, we introduce the following variations of the Lagrange basis.
Again, we do the computations to keep track of the involved constants.

\begin{lemma}\label{lem:localcardinal-inf}
    Let $\check{\chi}^{(\ell)}_{i}$ be the cardinal function where the Lagrange conditions are enforced only in $B^{(\ell)}(\vec{x}_{i}^{(\ell)}; \varrho h_{\ell} |\log h_{\ell}|)$. Then,
    \begin{equation}\label{eq:localcardinal-inf}
        \|\chi^{(\ell)}_{i}-\check{\chi}^{(\ell)}_{i}\|_{L_{\infty}(\Omega)} \le c_{\ref{eq:localcardinal-inf}} h_{\ell}^{\theta \varrho-d}
    \end{equation}
    holds.
\end{lemma}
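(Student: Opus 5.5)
We follow the Fuselier et al. strategy for local Lagrange functions. Write $Y:=B^{(\ell)}(\vec{x}_{i}^{(\ell)}; \varrho h_{\ell} |\log h_{\ell}|)$ and $r:=\varrho h_{\ell}|\log h_{\ell}|$. We take $\check{\chi}^{(\ell)}_{i}$ to be the Lagrange function for the kernel $\Phi_\ell$ on the point set $Y$. In other words, it is the unique element of $\mathrm{span}\{\Phi_\ell(\cdot,\vec{y}):\vec{y}\in Y\}$ with $\check{\chi}^{(\ell)}_{i}(\vec{y})=\delta_{\vec{y},\vec{x}_i^{(\ell)}}$ for $\vec{y}\in Y$.

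The key observation is that $\check{\chi}^{(\ell)}_{i}$ is the interpolant on $Y$ of the global cardinal function. Indeed, $\chi^{(\ell)}_{i}$ satisfies the same Lagrange conditions on $Y\subseteq X_\ell$, so $\check{\chi}^{(\ell)}_{i}=\mathcal{I}_{Y}\chi^{(\ell)}_{i}$. Hence the difference $g:=\chi^{(\ell)}_{i}-\check{\chi}^{(\ell)}_{i}$ vanishes on $Y$. We then write $\chi^{(\ell)}_{i}=\sum_k (A^{-1}_{\ell,\ell})_{ik}\Phi_\ell(\cdot,\vec{x}_k^{(\ell)})$ using \eqref{eq:chi} and split the sum into two parts:
\begin{itemize}
\item a near part $\chi^{\mathrm{near}}$, over $k$ with $\vec{x}_k^{(\ell)}\in Y$;
\item a far part $\chi^{\mathrm{far}}$, over $k$ with $\vec{x}_k^{(\ell)}\notin Y$.
\end{itemize}
The near part already lies in the local space, so interpolation on $Y$ reproduces it. Therefore
\begin{equation*}
g=\chi^{\mathrm{far}}-\mathcal{I}_Y\chi^{\mathrm{far}}.
\end{equation*}
This reduces the problem to bounding $\|\chi^{\mathrm{far}}\|_{L_\infty}$ together with the Lebesgue constant of interpolation on $Y$.

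To bound $\chi^{\mathrm{far}}$, we use \eqref{eq:lagrangedecayA}. Each coefficient with $\|\vec{x}_i^{(\ell)}-\vec{x}_k^{(\ell)}\|_2>r$ is bounded by $c_{\ref{eq:lagrangedecayA}}q_\ell^d e^{-\theta r/q_\ell}$, and $\|\Phi_\ell\|_\infty\le \delta_\ell^{-d}\|\Phi\|_\infty$. At any point $\vec{x}$, only those $k$ with $\|\vec{x}-\vec{x}_k^{(\ell)}\|_2\le\delta_\ell$ contribute, and there are at most $(1+\nu c_q)^d$-many of them by volume comparison. So $q_\ell^d\delta_\ell^{-d}$ is bounded by a constant. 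Using $q_\ell\le h_\ell$, we get $e^{-\theta r/q_\ell}\le e^{-\theta\varrho|\log h_\ell|}=h_\ell^{\theta\varrho}$. Thus $\|\chi^{\mathrm{far}}\|_{L_\infty}\le C h_\ell^{\theta\varrho}$. (Alternatively, one can sum the far coefficients with the annuli trick from Corollary \ref{cor:lagrangedecaysum}, which gives the same rate.)

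Next we bound $\|\mathcal{I}_Y\chi^{\mathrm{far}}\|_{L_\infty}\le \Lambda_Y\,\|\chi^{\mathrm{far}}\|_{L_\infty(Y)}$, where $\Lambda_Y$ is the Lebesgue constant on $Y$. This is the main obstacle. Lemma \ref{lem:lagrangedecay} and Corollary \ref{cor:lagrangedecaysum} are stated for the quasi-uniform global sets $X_\ell$, while $Y$ is a cluster with the same separation $q_\ell$ but which does not fill $\Omega$. Two routes are available:
\begin{itemize}
\item The robust route, which we take, avoids any uniform Lebesgue bound. Stability of the local Gram matrix depends only on $q_\ell$ via the lower Fourier bound \eqref{eq:deltafourier}, exactly as in the derivation of \eqref{eq:lagrangedecayA}. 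So $\|A_Y^{-1}\|_2\le C q_\ell^{d}\delta_\ell^{-d}\cdots$, bounded as for $A_{\ell,\ell}$. The cardinal functions on $Y$ are then bounded pointwise by $C\sqrt{\#Y}$. Since $\#Y\le C(1+r/q_\ell)^d\le C|\log h_\ell|^d$, this gives $\Lambda_Y\le C\,\#Y\le C h_\ell^{-d}$, crudely.
\item Alternatively, one could use the compact-support Lebesgue bound of \cite[Cor. 2.5]{demarchi:wendland:2020} applied to $Y$.
\end{itemize}
The crude bound $\Lambda_Y\le C\#Y\le C'(\varrho|\log h_\ell|/c)^{d}\le C h_\ell^{-d}$ suffices for the stated exponent $\theta\varrho-d$, which explains the loss of $d$ in the statement.

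Combining the two bounds, $\|g\|_{L_\infty(\Omega)}\le(1+\Lambda_Y)\,C h_\ell^{\theta\varrho}\le c_{\ref{eq:localcardinal-inf}}h_\ell^{\theta\varrho-d}$. The constant $c_{\ref{eq:localcardinal-inf}}$ collects $c_{\ref{eq:lagrangedecayA}}$, $\|\Phi\|_\infty$, $(1+\nu c_q)^d$, and the local stability constant.
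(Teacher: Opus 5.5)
Your argument is correct, and its skeleton is the paper's. The paper reaches the same splitting through the block (Schur-complement) form of $A_{\ell,\ell}^{-1}$. Its two terms are exactly $\|\mathcal{I}_Y\chi^{\mathrm{far}}\|_{L_\infty(\Omega)}$ (the sum involving $M C_{\ell}(i)^{T}B$) and $\|\chi^{\mathrm{far}}\|_{L_\infty(\Omega)}$, both estimated via \eqref{eq:lagrangedecayA}. Your identity $\check{\chi}^{(\ell)}_{i}=\mathcal{I}_Y\chi^{(\ell)}_{i}$, plus reproduction of the near part, gets there without the matrix algebra.

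The real difference is where the factor $h_\ell^{-d}$ is spent. The paper sums $|(A_{\ell,\ell}^{-1})_{ih}|\,\|\Phi_\ell\|_{L_\infty}$ over all of $\mathcal{I}_{i}^{(\ell)\complement}$, paying $\#\mathcal{I}_{i}^{(\ell)\complement}\lesssim h_\ell^{-d}$. It then controls the local Lebesgue function simply by invoking Corollary \ref{cor:lagrangedecaysum} for the set $\mathcal{I}_{i}^{(\ell)}$. You instead use the compact support of $\Phi_\ell$ to get $\|\chi^{\mathrm{far}}\|_{L_\infty}\lesssim h_\ell^{\theta\varrho}$ with no loss. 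You spend $h_\ell^{-d}$ on the crude bound $\Lambda_Y\lesssim\#Y\lesssim|\log h_\ell|^{d}$, which needs only spectral information.

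Your caution is legitimate, since Lemma \ref{lem:lagrangedecay} is stated for $X_\ell$, but the point is easy to settle. $B_\ell(i)$ is a principal submatrix of $A_{\ell,\ell}$, so by Cauchy interlacing its spectrum lies in $[\lambda_{\min}(A_{\ell,\ell}),\lambda_{\max}(A_{\ell,\ell})]$, and it inherits the $\delta_\ell$-sparsity pattern. Since \eqref{eq:lagrangedecayA} is derived from these condition-number bounds and this sparsity, it transfers to $B_\ell(i)^{-1}$. This gives $\Lambda_Y\le c_{\ref{eq:lagrangedecaysum}}$, and together with your far-part estimate you would even get $O(h_\ell^{\theta\varrho})$.

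There is one small inaccuracy in your crude route. The bound to use is $\|B_\ell(i)^{-1}\|_2\le\|A_{\ell,\ell}^{-1}\|_2\lesssim q_\ell^{d}$. From it, each cardinal function $\chi^{(Y)}_{k}$ on $Y$ satisfies $|\chi^{(Y)}_{k}(\vec{x})|\le\|B_\ell(i)^{-1}\|_2\,\|(\Phi_\ell(\vec{x},\vec{y}))_{\vec{y}\in Y}\|_2\lesssim(q_\ell/\delta_\ell)^{d}\le\nu^{-d}$, and this is what yields $\Lambda_Y\lesssim\#Y$. A per-cardinal bound $C\sqrt{\#Y}$ would only sum to $C(\#Y)^{3/2}$, though that would still suffice here.
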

\begin{proof}
    We proceed as in the previous proof, introducing the notation for the coefficients of the cardinal functions. Let $A := (A_{\ell, \ell})^{-1}$.
    Let $\chi^{(\ell)}_{i}$ be the standard cardinal function based on $\Phi_{\ell}$ on the set of points $X_{\ell}$, i.e.
    \begin{equation*}
        \chi^{(\ell)}_{i} = \sum_{j=1}^{N(\ell)} A_{i j} \Phi_{\ell}(\vec{x}_{j}^{(\ell)}, \cdot),
    \end{equation*}
    and let $\mathcal{I}_{i}^{(\ell)} := \{\vec{x}^{(\ell)} \in X_{\ell} \, : \, \|\vec{x}_{i}^{(\ell)}-\vec{x}^{(\ell)}\|_{2} \le \varrho h_{\ell}|\log(h_{\ell})|\} \subset X_{\ell}$ and $\mathcal{I}_{i}^{(\ell)\complement} := X_{\ell} \setminus \mathcal{I}_{i}^{(\ell)}$.  Then, we define 
    \begin{align*}
        \check{\chi}^{(\ell)}_{i} &:= \sum_{\vec{x}^{(\ell)} \in \mathcal{I}_{i}^{(\ell)}} B_{i j} \Phi_{\ell}(\vec{x}^{(\ell)}, \cdot) \\
    \end{align*}
    where, given $B_{\ell}(i) := \left(\Phi_{\ell}(\vec{x}^{\ell}_{j},\vec{x}^{\ell}_{k}) \right)_{\vec{x}^{\ell}_{j},\vec{x}^{\ell}_{k} \in \mathcal{I}_{i}^{(\ell)}}$, we define $B := (B_{\ell}(i))^{-1}$.
    We have
    \begin{equation*}
        A_{\ell,\ell} = \left(\begin{array}{cc}
           B_{\ell}(i) & C_{\ell}(i) \\
           C_{\ell}(i)^{T}  & D_{\ell}(i)
        \end{array}\right), 
    \end{equation*}
    with 
    \begin{equation*}
        C_{\ell}(i) := \left(\Phi_{\ell}(\vec{x}^{\ell}_{j},\vec{x}^{\ell}_{k}) \right)_{\vec{x}^{\ell}_{j} \in \mathcal{I}_{i}^{(\ell)},\vec{x}^{\ell}_{k} \in \mathcal{I}_{i}^{(\ell)\complement}} \quad \text{ and } \quad D_{\ell}(i) := \left(\Phi_{\ell}(\vec{x}^{\ell}_{j},\vec{x}^{\ell}_{k}) \right)_{\vec{x}^{\ell}_{j},\vec{x}^{\ell}_{k} \in \mathcal{I}_{i}^{(\ell)\complement}}.
    \end{equation*}
    Thus, given the invertibility of $B_{\ell}(i)$, we can highlight the structure of the matrix $A$. Indeed, 
    \begin{equation*}
        A = (A_{\ell,\ell})^{-1} = \left(\begin{array}{cc}
           B-MC_{\ell}(i)^{T}B & M  \\
           \star  & \star
        \end{array}\right), 
    \end{equation*}
    with $M:= BC_{\ell}(i)(D_{\ell}(i)-C_{\ell}(i)^{T}BC_{\ell}(i))^{-1}$.
    Then, 
    \begin{align*}
        \|\chi_{i}-\check{\chi}_{i}\|_{L_{\infty}(\Omega)} & = \left\|\sum_{\vec{x}^{(\ell)}_{j} \in X_{\ell}} A_{ij}\Phi_{\ell}(\vec{x}^{(\ell)}_{j}, \cdot) - \sum_{\vec{x}^{(\ell)}_{j} \in \mathcal{I}_{i}^{(\ell)}} B_{ij}\Phi_{\ell}(\vec{x}^{(\ell)}_{j}, \cdot)\right\|_{L_{\infty}(\Omega)} \\
        & \le \left\|\sum_{\vec{x}^{(\ell)}_{j} \in \mathcal{I}_{i}^{(\ell)}} (A-B)_{ij}\Phi_{\ell}(\vec{x}^{(\ell)}_{j}, \cdot)\right\|_{L_{\infty}(\Omega)} + \left\| \sum_{\vec{x}^{(\ell)}_{j} \in \mathcal{I}_{i}^{(\ell)\complement}} A_{ij}\Phi_{\ell}(\vec{x}^{(\ell)}_{j}, \cdot)\right\|_{L_{\infty}(\Omega)} \\
        & = \left\|\sum_{\vec{x}^{(\ell)}_{j}, \vec{x}^{(\ell)}_{k} \in \mathcal{I}_{i}^{(\ell)}, \vec{x}^{(\ell)}_{h} \in \mathcal{I}_{i}^{(\ell)\complement}} M_{ih} C_{\ell}(i)^{T}_{hk}B_{kj}\Phi_{\ell}(\vec{x}^{(\ell)}_{j}, \cdot)\right\|_{L_{\infty}(\Omega)} + \\
        &+\left\| \sum_{\vec{x}^{(\ell)}_{j} \in \mathcal{I}_{i}^{(\ell)\complement}} A_{ij}\Phi_{\ell}(\vec{x}^{(\ell)}_{j}, \cdot)\right\|_{L_{\infty}(\Omega)}.
    \end{align*}
    From here, noting that $M$ is a submatrix of $A$ we can apply equation \ref{eq:lagrangedecayA} and bound
    \begin{align*}
        &\left\|\sum_{\vec{x}^{(\ell)}_{j}, \vec{x}^{(\ell)}_{k} \in \mathcal{I}_{i}^{(\ell)}, \vec{x}^{(\ell)}_{h} \in \mathcal{I}_{i}^{(\ell)\complement}} M_{ih} C_{\ell}(i)^{T}_{hk}B_{kj}\Phi_{\ell}(\vec{x}^{(\ell)}_{j}, \cdot)\right\|_{L_{\infty}(\Omega)} \\
        &\le \sum_{\vec{x}^{(\ell)}_{h} \in \mathcal{I}_{i}^{(\ell)\complement}} c_{\ref{eq:lagrangedecayA}} q_{\ell}^{d} e^{-\theta\|\vec{x}^{(\ell)}_{i}-\vec{x}^{(\ell)}_{h}\|_{2}/q_{\ell}} \max_{\vec{x}_{k}^{(\ell)} \in \mathcal{I}_{i}^{(\ell)}}|\Phi_{\ell}(\vec{x}_{k}^{(\ell)}, \vec{x}_{h}^{(\ell)})| \| \sum_{\vec{x}^{(\ell)}_{k} \in \mathcal{I}_{i}^{(\ell)}}\chi^{(\mathcal{I}_{i}^{(\ell)})}_{k}\|_{L_{\infty}(\Omega)} \\
        & \le c_{\ref{eq:lagrangedecayA}} c_{\ref{eq:lagrangedecaysum}} (q_{\ell}/\delta_{\ell})^{d} h_{\ell}^{\theta \varrho} \#(\mathcal{I}_{i}^{(\ell)\complement}).
    \end{align*}
    Also
    \begin{align*}
        \left\| \sum_{\vec{x}^{(\ell)}_{j} \in \mathcal{I}_{i}^{(\ell)\complement}} A_{ij}\Phi_{\ell}(\vec{x}^{(\ell)}_{j}, \cdot)\right\|_{L_{\infty}(\Omega)} & \le \sum_{\vec{x}^{(\ell)}_{j} \in \mathcal{I}_{i}^{(\ell)\complement}} c_{\ref{eq:lagrangedecayA}} q_{\ell}^{d} e^{-\theta\|\vec{x}^{(\ell)}_{i}-\vec{x}^{(\ell)}_{j}\|_{2}/q_{\ell}} \|\Phi_{\ell}(\vec{x}^{(\ell)}_{j}, \cdot)\|_{L_{\infty}(\Omega)} \\
        & \le c_{\ref{eq:lagrangedecayA}} \nu^{-d} h_{\ell}^{\theta \varrho-d} N(0)c_{q}^{d}.
    \end{align*}
    Putting everything together, we get
    \begin{equation*}
        \|\chi_{i}-\check{\chi}_{i}\|_{L_{\infty}(\Omega)} \le c_{\ref{eq:lagrangedecayA}} \nu^{-d} h_{\ell}^{\theta \varrho-d} N(0)c_{q}^{d}(c_{\ref{eq:lagrangedecaysum}}+1),
    \end{equation*}
    where we applied equation \ref{eq:lagrangedecaysum} to the cardinal function $\chi^{(\mathcal{I}_{i}^{(\ell)})}$ constructed on the set of points $\mathcal{I}_{i}^{(\ell)}$.
    Grouping the constants as $c_{\ref{eq:localcardinal-inf}} := c_{\ref{eq:lagrangedecayA}} \nu^{-d} N(0)c_{q}^{d}(c_{\ref{eq:lagrangedecaysum}}+1)$ leads to the desired result.
\end{proof}

Note here, that $\varrho$ is a user defined constant. $\varrho$ can be made large at the expense of enforcing the Lagrange conditions at a larger number of points. To be precise, we have 
$ \#B^{(\ell)}(\vec{x},\varrho h_{\ell} |\log(h_{\ell})|) \le (1+\varrho c_{q} |\log{h_{\ell}|})^{d}$.
We refer to $\check{\chi}$ as local version of the Lagrange functions. 

\section{Adaptivity}
\label{sec:adaptivity}
In this section we introduce the concept of adaptivity to the multiscale framework.
The overall idea is to ease the computational cost by considering at each level/ scale a subset of $X_{\ell}$, in particularly a subset were the residual is still large, while "cutting" all points where the error is already low, following the idea from \cite{LeGia:Wendland:2014}[Algorithm 2] in term of coefficients. To end this we propose the following alternative general algorithm.

\begin{algorithm}[ht]
    \caption{Adaptive multiscale approximation \label{alg:adaptms}}
    \begin{algorithmic}[1]
    
    \State{Input: Number of levels $L$, sets of points $X_{1}, \dots, X_{L}$, rhs $f$, thresholding $ \varepsilon$}
    \State{Output: Multiscale approximation $f_{L}$.}
    \vspace{0.1cm}
    \State $\bar{e}_{0} \leftarrow f$; 
    \State $\bar{f}_{0} \leftarrow 0$; 
    \For{$\ell = 1, \dots, L-1$}   
        \State $\check{X}_{\ell} \leftarrow \mathcal{T}_{\ell}(X_{\ell}, f,  \varepsilon, k, \magicC);$ \quad see  \eqref{eq:adaptpoint} for a definition;
        \State $\bar{s}_{\ell} \leftarrow \mathcal{I}_{\ell}(\bar{e}_{\ell-1})$; 
        \State $\bar{e}_{\ell} \leftarrow \bar{e}_{\ell-1} - \bar{s}_{\ell}$; 
        \State $\bar{f}_{\ell} \leftarrow \bar{f}_{\ell-1} + \bar{s}_{\ell}$; 
    \EndFor
    \end{algorithmic}
\end{algorithm}

Here $\mathcal{T}_{\ell}$ is operator responsible for the point selection. We provide a definition in \eqref{eq:adaptpoint}.

We use the notation
\begin{equation}
    \label{eq:totalpoints}
        X_{\text{total}}:=\bigcup_{\ell=1}^{L} X_{\ell}
\end{equation}
for the total number of used points.
Now we want to show that an adaptive strategy to solve the multiscale problem is possible. We assume in the following that $X_{1} \subset \dots \subset X_{\ell} \subset \dots\}$ is a nested sequence of point set that satisfies Assumption \ref{ass:pointset} with a fixed $k>1$. In the next sections we use a bar to indicate that objects belong to the adaptive strategy, e.g., $e_{\ell}$ is the error of the classic scheme at level $\ell$, while $\bar{e}_{\ell}$ is the error of the adaptive scheme at the same level. 
Clearly, it is usually more appropriate to have adaptivity where can be more profitable, i.e., there are more points involved. Thus, it is reasonable to combine the methods, assuming that the traditional algorithm is applied in the first $\ell_{0}$ levels and the adaptive from there on.  We fix $\ell_{0} := L^{\star}$, as it gives consistency (see theorem \ref{thm:consistency}), however, different choices are possible in practice. Note that $\ell_{0}=0$ provide a fully adaptive algorithm. More comments about it are present further in the article.

Moreover, in the following theoretical analysis we assume to deal just with the adaptive part, having the target function of algorithm \ref{alg:adaptms} as the error after $L^{\star}$ iteration of the classical scheme, i.e., $e_{\ell_{0}}$.

At level $\ell$, we assume to have access to the residual of the previous step, i.e.,
\begin{align*}
	\bar{e}_{\ell-1} = e_{\ell_{0}} - \sum_{j=\ell_{0}+1}^{\ell-1} \bar{s}_{j}.
\end{align*}
We define, for $\ell > \ell_{0}$, 
\begin{align*}
	X^{\prime}_{\ell}:= \left\{\vec{x}^{(\ell)} \in X_{\ell} \, : \, \forall
    \vec{x}^{(L)} \in X_{L} \cap B(\vec{x}_{\ell}, k \magicC h_{\ell} |\log(h_{\ell})|):  | \bar{e}_{\ell-1}(\vec{x}^{(L)}) | \le  \varepsilon_{\ell-1} \right\} \subset X_{\ell}.
\end{align*}
The set $X^{\prime}_{\ell}$ represents the set of points that we want to remove.
Moreover, we set 
\begin{align*}
	X^{\prime}_{\ell}\subset \bar{X}_{\ell}:= \bigcup_{\vec{x}^{(\ell)} \in X^{\prime}_{\ell}} B^{(\ell)}(\vec{x}^{(\ell)}; k \magicC h_{\ell} |\log(h_{\ell})|).
\end{align*}

In the end we have
\begin{equation}\label{eq:adaptpoint}
    \mathcal{T}_{\ell}(X_{\ell}, f, \varepsilon_{\ell},k, \magicC) := X_{\ell} \setminus \bar{X}_{\ell} = \check{X}_{\ell},
\end{equation}
where even if not explicitly written to lighten the notation, $\bar{X}_{\ell}$ depends on both $e_{\ell_{0}}$ and $\varepsilon_{\ell}$.

We then compute the approximation at level $\ell$ 
\begin{align*}
	\bar{s}_{\ell}:=\sum_{\vec{x}_{i}^{(\ell)} \in \check{X}_{\ell}} \bar{e}_{\ell-1} (\vec{x}_{i}^{(\ell)}) \chi^{(\ell)}_{i}.
\end{align*}
\begin{remark}
    \label{rk:check}
    By making $k\kappa >\varrho$, we can make sure that we can safely replace $\chi^{(\ell)}_{i}$ by $\check{\chi}^{(\ell)}_{i}$ and $\check{\chi}^{(\ell)}_{i}$ can be computed without using points from $X^{\prime}_{\ell}$.
\end{remark}

We are now in the position to assess the consistency of the adaptive scheme.
Define 
\begin{equation}
    \label{eq:pointsused}
        X_{\le \ell} := \bigcup_{j=\ell_{0}+1}^{\ell} \check{X}_{j}.
\end{equation}
    
First, we need to introduce some intermediate results.
\begin{lemma}\label{lem:ellbound}
Let Assumptions \ref{ass:kernel} and \ref{ass:pointset} hold. 
    Then, we have
    \begin{equation}
        \label{eq:ell1bound}
            \|\bar{e}_{\ell-1}\|_{\ell^{\infty}(\check{X}_{\ell})}\le
        (1+c_{\ref{eq:lagrangedecaysum}})^{\ell-\ell_{0}-1}\left\| e_{\ell_{0}} \right\|_{\ell^{\infty}(X_{\le \ell} )}.
    \end{equation}
\end{lemma}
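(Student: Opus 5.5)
Since the residual obeys a linear recursion with uniformly bounded Lagrange sums, the plan is an induction on $\ell \ge \ell_{0}+1$. For $\ell = \ell_{0}+1$ we have $\bar{e}_{\ell_{0}} = e_{\ell_{0}}$, and $\check{X}_{\ell_{0}+1}\subseteq X_{\le \ell_{0}+1}$ by \eqref{eq:pointsused}. The claim with exponent $0$ is therefore immediate:
\begin{equation*}
\|\bar{e}_{\ell_{0}}\|_{\ell^{\infty}(\check{X}_{\ell_{0}+1})} \le \|e_{\ell_{0}}\|_{\ell^{\infty}(X_{\le \ell_{0}+1})}.
\end{equation*}

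For the inductive step, fix $\vec{y}\in\check{X}_{\ell+1}$ and write $\bar{e}_{\ell}(\vec{y}) = \bar{e}_{\ell-1}(\vec{y}) - \bar{s}_{\ell}(\vec{y})$. The definition of $\bar{s}_{\ell}$ as a sum over $\check{X}_{\ell}$, together with \eqref{eq:lagrangedecaysum} from Corollary \ref{cor:lagrangedecaysum}, gives
\begin{equation*}
|\bar{s}_{\ell}(\vec{y})| \le \sum_{\vec{x}_{i}^{(\ell)}\in\check{X}_{\ell}} |\bar{e}_{\ell-1}(\vec{x}_{i}^{(\ell)})|\,|\chi_{i}^{(\ell)}(\vec{y})| \le c_{\ref{eq:lagrangedecaysum}}\|\bar{e}_{\ell-1}\|_{\ell^{\infty}(\check{X}_{\ell})}.
\end{equation*}
This is the easy half. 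The difficulty is the term $\bar{e}_{\ell-1}(\vec{y})$, because $\vec{y}$ is a point of $X_{\ell+1}$ and not necessarily a point of $\check{X}_{\ell}$. Iterating the splitting does not help directly, since that would only add one constant per level instead of multiplying.

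To control $\bar{e}_{\ell-1}(\vec{y})$ I unroll it completely, $\bar{e}_{\ell-1}(\vec{y}) = e_{\ell_{0}}(\vec{y}) - \sum_{j=\ell_{0}+1}^{\ell-1}\bar{s}_{j}(\vec{y})$. The first term is bounded by $\|e_{\ell_{0}}\|_{\ell^{\infty}(X_{\le \ell+1})}$ because $\vec{y}\in\check{X}_{\ell+1}\subseteq X_{\le\ell+1}$. Each remaining term is bounded as above by $c_{\ref{eq:lagrangedecaysum}}\|\bar{e}_{j-1}\|_{\ell^{\infty}(\check{X}_{j})}$, and the induction hypothesis (strong form, over all $j\le \ell$) bounds this by $c_{\ref{eq:lagrangedecaysum}}(1+c_{\ref{eq:lagrangedecaysum}})^{j-\ell_{0}-1}\|e_{\ell_{0}}\|_{\ell^{\infty}(X_{\le \ell+1})}$, using the monotonicity $X_{\le j}\subseteq X_{\le \ell+1}$. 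Adding up gives a geometric sum,
\begin{equation*}
1 + c_{\ref{eq:lagrangedecaysum}}\sum_{j=\ell_{0}+1}^{\ell}(1+c_{\ref{eq:lagrangedecaysum}})^{j-\ell_{0}-1} = (1+c_{\ref{eq:lagrangedecaysum}})^{\ell-\ell_{0}},
\end{equation*}
which is exactly the claimed factor at level $\ell+1$. Here the sum runs to $j=\ell$ and so already includes the $\bar{s}_{\ell}$ term.

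The main obstacle is therefore bookkeeping: the induction must be run in this unrolled, strong form so that point values of $\bar{e}$ at points outside the earlier $\check{X}_{j}$ never need to be controlled. Each value is reduced to $e_{\ell_{0}}$ at $\vec{y}$ plus Lagrange-weighted values on the sets $\check{X}_{j}$ already handled. I also need to check that the Lebesgue-type bound \eqref{eq:lagrangedecaysum} still applies when the sum is restricted to the subset $\check{X}_{\ell}$. It does, because restricting a sum of nonnegative terms only decreases it. If the paper intends $\chi$ to be replaced by $\check{\chi}$ in view of Remark \ref{rk:check}, the same constant (up to Lemma \ref{lem:localcardinal-inf}) would be used.
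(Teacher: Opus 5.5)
Your proof is correct, and the constant closes exactly. Writing $c=c_{\ref{eq:lagrangedecaysum}}$, the strong hypothesis gives $|\bar{s}_{j}(\vec{y})|\le c(1+c)^{j-\ell_{0}-1}\|e_{\ell_{0}}\|_{\ell^{\infty}(X_{\le \ell+1})}$, and $1+c\sum_{m=0}^{n-1}(1+c)^{m}=(1+c)^{n}$ finishes the step.

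The paper closes the recursion differently. It uses only the one-step splitting $\bar{e}_{\ell-1}=\bar{e}_{\ell-2}-\bar{s}_{\ell-1}$ and bounds both pieces by the supremum of $\bar{e}_{\ell-2}$ over $\check{X}_{\ell-1}\cup\check{X}_{\ell}\subseteq X_{\le\ell}$. It then iterates on the fixed enlarged set $X_{\le\ell}$. Since this set contains every $\check{X}_{j}$ with $j\le\ell$, each step costs exactly one factor $(1+c)$. So the obstacle you point out ($\vec{y}\notin\check{X}_{\ell}$) is removed there by enlarging the set rather than by unrolling. Your remark that iterating the splitting cannot produce a multiplicative factor is therefore not accurate.

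The paper's route is shorter and needs no strong induction. Its ``iterating'' does silently require the inductive statement on $X_{\le\ell}$ rather than on $\check{X}_{\ell-1}$, which is actually a stronger conclusion.

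Your route gives a slightly sharper pointwise statement, valid at every $\vec{y}\in\R^{d}$:
\[
|\bar{e}_{\ell}(\vec{y})|\le|e_{\ell_{0}}(\vec{y})|+\bigl((1+c)^{\ell-\ell_{0}}-1\bigr)\|e_{\ell_{0}}\|_{\ell^{\infty}(X_{\le\ell})}.
\]
Here the local value of $e_{\ell_{0}}$ enters with coefficient one, and only the values of $\bar{e}_{j-1}$ on the interpolated sets $\check{X}_{j}$ are used. The one-step argument applied to $\{\vec{y}\}\cup X_{\le\ell}$ would instead put the factor $(1+c)^{\ell-\ell_{0}}$ on $|e_{\ell_{0}}(\vec{y})|$ as well.
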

\begin{proof}
   First, we start with
    \begin{align*}
        \|\bar{e}_{\ell-1}&\|_{\ell^{\infty}(\check{X}_{\ell})} = \|\bar{e}_{\ell-2}-\bar{s}_{\ell-1}\|_{\ell^{\infty}(\check{X}_{\ell})} \le \left\| \bar{e}_{\ell-2}\right\|_{\ell^{\infty}(\check{X}_{\ell})} + \left\|\bar{s}_{\ell-1}\right\|_{\ell^{\infty}(\check{X}_{\ell})} \\
        & \le \left\|\bar{e}_{\ell-2}\right\|_{\ell^{\infty}(\check{X}_{\ell})} + \left\| \sum_{\vec{x}_{j}^{(\ell-1)} \in \check{X}_{\ell-1}} \bar{e}_{\ell-2}(\vec{x}_{j}^{(\ell-1)}) \chi^{(\ell-1)}_{j} \right\|_{\ell^{\infty}(\check{X}_{\ell})} \\
        & \le \left\|\bar{e}_{\ell-2}\right\|_{\ell^{\infty}(\check{X}_{\ell})} + \left\| \bar{e}_{\ell-2} \right\|_{\ell^{\infty}(\check{X}_{\ell-1})} \max_{\vec{x}^{(\ell)} \in \check{X}_{\ell}}  \sum_{\vec{x}_{j}^{(\ell-1)} \in \check{X}_{\ell-1}} \left| \chi^{(\ell-1)}_{j}(\vec{x}^{(\ell)}) \right| \\
        & \le (1 + c_{\ref{eq:lagrangedecaysum}}) \left\| \bar{e}_{\ell-2} \right\|_{\ell^{\infty}(\check{X}_{\ell-1} \cup \check{X}_{\ell})} \le (1 + c_{\ref{eq:lagrangedecaysum}}) \left\| \bar{e}_{\ell-2} \right\|_{\ell^{\infty}(X_{\le \ell} )},
    \end{align*}
    where we exploited corollary \ref{cor:lagrangedecaysum}. Iterating thus yields
    \begin{equation*}
        \|\bar{e}_{\ell-1}\|_{\ell^{\infty}(\check{X}_{\ell})} \le (1+c_{\ref{eq:lagrangedecaysum}})^{\ell-\ell_{0}-1}\left\| e_{\ell_{0}} \right\|_{\ell^{\infty}(X_{\le \ell} )}.
    \end{equation*} 
\end{proof}

\begin{lemma}\label{lem:barsellbound}
Let Assumptions \ref{ass:kernel} and \ref{ass:pointset} hold.
For all $\vec{x}^{(\ell)}\in X^{\prime}_{\ell}$ we have the bound
\begin{equation}\label{eq:barsellbound}
    \|\bar{s}_{\ell}\|_{\ell^{\infty}(B^{(L)}(\vec{x}^{(\ell)}; \magicC h_{\ell} |\log (h_{\ell})|))}\le C_{\#} c_{\ref{eq:lagrangedecay}}(1+c_{\ref{eq:lagrangedecaysum}})^{\ell-\ell_{0}-1}\left\| e_{\ell_{0}} \right\|_{\ell^{\infty}(X_{\le \ell} )} h^{\magicC \theta(k-1)-d}_{\ell}.
\end{equation}
\end{lemma}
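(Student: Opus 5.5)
Fix $\vec{x}^{(\ell)}\in X^{\prime}_{\ell}$ and $\vec{y}\in B^{(L)}(\vec{x}^{(\ell)}; \magicC h_{\ell}|\log(h_{\ell})|)$. The plan is to write $\bar{s}_{\ell}(\vec{y})$ through its Lagrange representation over $\check{X}_{\ell}$, bound the coefficients by Lemma \ref{lem:ellbound}, and bound the cardinal functions by the exponential decay \eqref{eq:lagrangedecay}. The key observation is that every point of $\check{X}_{\ell}$ is far from $\vec{y}$. By construction, $\bar{X}_{\ell}\supseteq B^{(\ell)}(\vec{x}^{(\ell)}; k\magicC h_{\ell}|\log(h_{\ell})|)$, and $\check{X}_{\ell}=X_{\ell}\setminus\bar{X}_{\ell}$. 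Hence every $\vec{x}_{i}^{(\ell)}\in\check{X}_{\ell}$ satisfies $\|\vec{x}_{i}^{(\ell)}-\vec{x}^{(\ell)}\|_{2}> k\magicC h_{\ell}|\log(h_{\ell})|$. The triangle inequality then gives
\begin{equation*}
\|\vec{x}_{i}^{(\ell)}-\vec{y}\|_{2}> (k-1)\magicC h_{\ell}|\log(h_{\ell})| \ge (k-1)\magicC q_{\ell}|\log(h_{\ell})|,
\end{equation*}
using $q_{\ell}\le h_{\ell}$.

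The estimate itself goes as follows. We have
\begin{equation*}
|\bar{s}_{\ell}(\vec{y})|\le \|\bar{e}_{\ell-1}\|_{\ell^{\infty}(\check{X}_{\ell})}\sum_{\vec{x}_{i}^{(\ell)}\in\check{X}_{\ell}}|\chi^{(\ell)}_{i}(\vec{y})|.
\end{equation*}
Lemma \ref{lem:ellbound} bounds the first factor by $(1+c_{\ref{eq:lagrangedecaysum}})^{\ell-\ell_{0}-1}\|e_{\ell_{0}}\|_{\ell^{\infty}(X_{\le\ell})}$. For each summand, \eqref{eq:lagrangedecay} together with the distance bound gives
\begin{equation*}
|\chi^{(\ell)}_{i}(\vec{y})|\le c_{\ref{eq:lagrangedecay}}\, e^{-\theta(k-1)\magicC|\log h_{\ell}|}=c_{\ref{eq:lagrangedecay}}\, h_{\ell}^{\magicC\theta(k-1)},
\end{equation*}
where the equality uses $h_{\ell}<1$. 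To control the number of summands, I would use the crude count $\#\check{X}_{\ell}\le N(\ell)\le C_{\#}\mu^{-d\ell}$ from \eqref{eq:Nmu2}. Assumption \ref{ass:pointset} gives $h_{\ell}\le\mu^{\ell}h_{0}$, so $\mu^{-d\ell}\le h_{0}^{d}h_{\ell}^{-d}$, and $h_{0}^{d}$ can be absorbed (with the normalization $h_{0}\le 1$, or into $C_{\#}$). Multiplying the three factors yields exactly $C_{\#}c_{\ref{eq:lagrangedecay}}(1+c_{\ref{eq:lagrangedecaysum}})^{\ell-\ell_{0}-1}\|e_{\ell_{0}}\|_{\ell^{\infty}(X_{\le\ell})}h_{\ell}^{\magicC\theta(k-1)-d}$. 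Taking the maximum over $\vec{y}$ then gives the claim.

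The main subtlety lies in the geometric step. It must be checked that the removed set $\bar{X}_{\ell}$ contains the full ball of radius $k\magicC h_{\ell}|\log h_{\ell}|$ around $\vec{x}^{(\ell)}$, which holds since $\vec{x}^{(\ell)}\in X^{\prime}_{\ell}$. It must also be checked that the decay exponent converts via $q_{\ell}\le h_{\ell}$ in the right direction, which it does because a smaller $q_{\ell}$ only strengthens the decay. The rest is bookkeeping of constants, in particular replacing $\mu^{-d\ell}$ by $h_{\ell}^{-d}$. A finer annulus count as in Corollary \ref{cor:lagrangedecaysum} would avoid the factor $h_{\ell}^{-d}$, but the stated bound only needs the crude count.
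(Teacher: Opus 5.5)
Your proposal is correct and follows the paper's proof essentially step by step. Both arguments use the separation $\dist(\vec{x}^{(\ell)},\check{X}_{\ell})\ge k\magicC h_{\ell}|\log(h_{\ell})|$ together with the triangle inequality, then the pointwise decay \eqref{eq:lagrangedecay} to obtain the factor $h_{\ell}^{\magicC\theta(k-1)}$, then the count $\#\check{X}_{\ell}\lesssim h_{\ell}^{-d}$, and finally Lemma \ref{lem:ellbound}. The differences are cosmetic: the paper passes through $\|\bar{e}_{\ell-1}\|_{\ell^{1}(\check{X}_{\ell})}$ before converting to the $\ell^{\infty}$ norm, and you spell out the use of $q_{\ell}\le h_{\ell}$ and the absorption of $h_{0}^{d}$, which the paper leaves implicit.
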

\begin{proof}
    Let $\vec{x}^{(\ell)} \in X^{\prime}_{\ell}$ be arbitrary but fixed.
    By construction, we have that 
    \begin{equation*}
        \dist(\vec{x}^{(\ell)},  \check{X}_{\ell})\ge k \magicC h_{\ell}|\log(h_{\ell})|.
    \end{equation*}
    Hence, for $\vec{x}^{(L)} \in B^{(L)}(\vec{x}^{(\ell)}; \magicC h_{\ell} |\log (h_{\ell})|)$
    \begin{equation*}
        \dist(\vec{x}^{(L)},  \check{X}_{\ell})\ge (k-1) \magicC h_{\ell}|\log(h_{\ell})|.
    \end{equation*}
    This gives, using \eqref{eq:lagrangedecay}, the bound
    \begin{align*}
    	|\bar{s}_{\ell}&(\vec{x}^{(L)})| \le \sum_{\vec{x}_{i}^{(\ell)} \in \check{X}_{\ell}} |\bar{e}_{\ell-1} (\vec{x}_{i}^{(\ell)}) \chi^{(\ell)}_{i}(\vec{x}^{(L)})| \le \sum_{\vec{x}_{i}^{(\ell)} \in \check{X}_{\ell}} c_{\ref{eq:lagrangedecay}}|\bar{e}_{\ell-1}(\vec{x}_{i}^{(\ell)})| e^{-\theta\|\vec{x}^{(L)}-\vec{x}_{i}^{(\ell)}\|_{2}/q_{\ell}}\\
        & \le \sum_{\vec{x}_{i}^{(\ell)} \in \check{X}_{\ell}} c_{\ref{eq:lagrangedecay}}|\bar{e}_{\ell-1}(\vec{x}_{i}^{(\ell)})|  h_{\ell}^{\magicC (k-1) \theta}  = c_{\ref{eq:lagrangedecay}} \|\bar{e}_{\ell-1}\|_{\ell^{1}(\check{X}_{\ell})}  h^{\magicC\theta(k-1)}_{\ell} \\
        & \le C_{\#} c_{\ref{eq:lagrangedecay}} \|\bar{e}_{\ell-1}\|_{\ell^{\infty}(\check{X}_{\ell})}  h^{\magicC \theta(k-1)-d}_{\ell}.
    \end{align*}
    The claim follows by applying Lemma \ref{lem:ellbound}.
\end{proof}

\begin{remark}
    The bound outlined in lemma \ref{lem:ellbound} is rather pessimistic, as we expect that the behaviour of $\|\bar{e}_{\ell-1}\|_{\ell^{\infty}(\check{X}_{\ell})}$ to be quite different from $\left\| \bar{e}_{\ell-2}\right\|_{\ell^{\infty}(\check{X}_{\ell})} + \left\|\bar{s}_{\ell-1}\right\|_{\ell^{\infty}(\check{X}_{\ell})}$, as the interpolant $\bar{s}_{\ell-1}$ is computed to approximate $\bar{e}_{\ell-2}$. Additionally, having $e_{\ell_{0}}$ as the error of the classic multiscale algorithm on the first levels, we have \ref{lem:classic-infbound} that allow us to write
    \begin{equation*}
        \|\bar{e}_{\ell-1}\|_{\ell^{\infty}(\check{X}_{\ell})}\le
        (1+c_{\ref{eq:lagrangedecaysum}})^{\ell-\ell_{0}-1} C_{2} (C_{1} \mu^{\tau-d/2})^{\ell_{0}} \left\| f \right\|_{H^{\tau}(\Omega)}.
    \end{equation*}
    Thus, we expect that the contribution of $(1+c_{\ref{eq:lagrangedecaysum}})^{\ell-\ell_{0}-1}$ is negligible. 
    For this reasons, the influence of $\ell$ on the bound should not be a concern. Indeed, numerical experiment shows a decay of $\|e_{\ell-1}\|_{\ell^{\infty}(\check{X}_{\ell})}$ with respect to $\ell$.
    Assuming, e.g., 
    \begin{equation*}
        \|\bar{e}_{\ell-1}\|_{\ell^{\infty}(\check{X}_{\ell})} \le h_{\ell}^{d}\left\|e_{\ell_{0}}\right\|_{\ell^{\infty}(\Omega)},
    \end{equation*}
    we can have 
    \begin{equation*}
        \|\bar{s}_{\ell}\|_{\ell^{\infty}(B^{(L)}(\vec{x}^{(\ell)}; \magicC h_{\ell} |\log (h_{\ell})|))}\le c_{\#} c_{\ref{eq:lagrangedecay}} \left\| e_{\ell_{0}} \right\|_{\ell^{\infty}(X_{\le \ell} )} h^{\magicC \theta(k-1)}_{\ell},
    \end{equation*}
    i.e., we can get the same bound with a smaller value of $\magicC$ compared with the result from the lemma.
\end{remark}
We finally introduce some structure on the threshold, and prove that such choice leads to a consistent adaptive algorithm.
\begin{definition}\label{def:eps}
    Let Assumptions \ref{ass:kernel} and \ref{ass:pointset} hold and $\magicC>1$.
    Then, for every $\varepsilon_0>0$, we can recursively define, in the context of Algorithm \ref{alg:adaptms}, for $\ell > \ell_{0}$
    \begin{equation}\label{eq:epsdef}
        \varepsilon_{\ell} := \varepsilon_{\ell-1} + \max_{\vec{x}^{(\ell)} \in X_{\ell}^{\prime}}\|s_{\ell}\|_{\ell^{\infty}(B^{(L)}(\vec{x}^{(\ell)};\magicC h_{\ell} |\log (h_{\ell})|))}.
    \end{equation}
\end{definition}

\begin{lemma}\label{lem:epsbound}
    Let Assumptions \ref{ass:kernel} and \ref{ass:pointset} hold. Let further assume that
    \begin{equation}
    \label{eq:magicCchoice}
        \magicC \ge \max \left(1, (k-1)^{-1}\theta^{-1}\left(\frac{\ln(2(1+c_{\ref{eq:lagrangedecaysum}}))}{\ln(\mu^{-1})} +d\right) \right)
    \end{equation}
    and consider $\varepsilon_{\ell}$ as defined in Definition \ref{def:eps}. Then, 
    \begin{equation}
        \varepsilon_{\ell}\le \varepsilon_{\ell_{0}}+c_{\#} c_{\ref{eq:lagrangedecay}} \left\| e_{\ell_{0}} \right\|_{\ell^{\infty}(X_{\le \ell} )}2c_{h}h_{\ell_{0}+1}^{\magicC \theta(k-1)-d}
    \end{equation}
    holds.
\end{lemma}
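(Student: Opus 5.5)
We telescope the recursion \eqref{eq:epsdef} and bound each increment by Lemma \ref{lem:barsellbound}, which leaves a geometric series. Unrolling Definition \ref{def:eps} from $\ell_{0}+1$ to $\ell$ gives
\begin{equation*}
\varepsilon_{\ell} = \varepsilon_{\ell_{0}} + \sum_{j=\ell_{0}+1}^{\ell} \max_{\vec{x}^{(j)} \in X_{j}^{\prime}}\|\bar{s}_{j}\|_{\ell^{\infty}(B^{(L)}(\vec{x}^{(j)};\magicC h_{j} |\log (h_{j})|))}.
\end{equation*}
Here we read $s_{j}$ in \eqref{eq:epsdef} as the adaptive increment $\bar{s}_{j}$, which is the quantity Lemma \ref{lem:barsellbound} controls. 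Applying that lemma to each summand gives
\begin{equation*}
\varepsilon_{\ell} \le \varepsilon_{\ell_{0}} + C_{\#} c_{\ref{eq:lagrangedecay}} \sum_{j=\ell_{0}+1}^{\ell} (1+c_{\ref{eq:lagrangedecaysum}})^{j-\ell_{0}-1}\left\| e_{\ell_{0}} \right\|_{\ell^{\infty}(X_{\le j})} h_{j}^{\magicC\theta(k-1)-d}.
\end{equation*}
Since $X_{\le j}\subseteq X_{\le \ell}$ for $j\le \ell$, every norm can be replaced by $\|e_{\ell_{0}}\|_{\ell^{\infty}(X_{\le \ell})}$, which then comes out of the sum.

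Set $\beta := \magicC\theta(k-1)-d$. By \eqref{eq:magicCchoice}, $\beta \ge \ln(2(1+c_{\ref{eq:lagrangedecaysum}}))/\ln(\mu^{-1}) > 0$. Assumption \ref{ass:pointset} gives $h_{j+1}\le \mu h_{j}$, so for $j \ge \ell_{0}+1$ we have $h_{j}\le \mu^{j-\ell_{0}-1}h_{\ell_{0}+1}$. Because $\beta>0$, this yields $h_{j}^{\beta}\le \mu^{\beta(j-\ell_{0}-1)}h_{\ell_{0}+1}^{\beta}$. The choice of $\magicC$ is made exactly so that
\begin{equation*}
\mu^{\beta} \le \mu^{\ln(2(1+c_{\ref{eq:lagrangedecaysum}}))/\ln(\mu^{-1})} = \frac{1}{2(1+c_{\ref{eq:lagrangedecaysum}})}.
\end{equation*}
Hence each summand is at most $h_{\ell_{0}+1}^{\beta}\,2^{-(j-\ell_{0}-1)}$ times the common prefactor. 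The resulting geometric sum is bounded by $2$, which gives
\begin{equation*}
\varepsilon_{\ell}\le \varepsilon_{\ell_{0}} + 2\,C_{\#} c_{\ref{eq:lagrangedecay}} \left\| e_{\ell_{0}} \right\|_{\ell^{\infty}(X_{\le \ell})} h_{\ell_{0}+1}^{\beta}.
\end{equation*}

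The only step needing care is reconciling constants with the stated form, which has $c_{\#}$ and an extra factor $c_{h}$ rather than $C_{\#}$. Since $c_{h}\le 1$, a factor $c_{h}$ cannot be introduced for free as an upper bound. It has to come from how the $h_{j}$ are compared, for instance by going through the two-sided bounds $c_h\mu^{j}h_0\le h_j\le \mu^j h_0$. The $C_{\#}$ versus $c_{\#}$ discrepancy stems from the cardinality bound \eqref{eq:Nmu2} used inside Lemma \ref{lem:barsellbound}. I expect the statement's constant is meant up to this bookkeeping, and my argument yields the displayed bound with constant $2C_{\#}c_{\ref{eq:lagrangedecay}}$.

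The substantive point is the geometric decay: the amplification $(1+c_{\ref{eq:lagrangedecaysum}})^{j}$ from Lemma \ref{lem:ellbound} must be beaten by $h_{j}^{\beta}$. This is precisely what \eqref{eq:magicCchoice} ensures.
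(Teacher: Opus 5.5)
Your proof is correct and follows the paper's argument. Both telescope \eqref{eq:epsdef}, bound each increment by Lemma~\ref{lem:barsellbound}, and use \eqref{eq:magicCchoice} to get $\mu^{\beta}(1+c_{\ref{eq:lagrangedecaysum}})\le \tfrac12$ (with your $\beta=\magicC\theta(k-1)-d$), so the sum becomes geometric.

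The only difference is where the series is anchored.
\begin{itemize}
\item The paper uses $h_{j+\ell_0}\le\mu^{j}h_{\ell_0}$. With $q=\mu^{\beta}(1+c_{\ref{eq:lagrangedecaysum}})$ it bounds the sum by $h_{\ell_0}^{\beta}q/(1-q)\le 2(1+c_{\ref{eq:lagrangedecaysum}})(\mu h_{\ell_0})^{\beta}$. It then returns to $h_{\ell_0+1}$ with a claimed factor $c_h$.
\item That last step does not hold. From $c_h\mu h_{\ell_0}\le h_{\ell_0+1}$ you only get $(\mu h_{\ell_0})^{\beta}\le c_h^{-\beta}h_{\ell_0+1}^{\beta}$. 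So the paper's route honestly yields a factor $2c_h^{-\beta}\ge 2$, not $2c_h$.
\item Likewise, the $c_\#$ in the statement should be the $C_\#$ that Lemma~\ref{lem:barsellbound} actually delivers.
\end{itemize}

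You are right that $c_h$ cannot be gained for free. Your guess about its source is off, though: passing through the two-sided bounds on $h_j$ costs $c_h^{-\beta}$ rather than producing $c_h$.

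Your direct comparison $h_j\le\mu^{j-\ell_0-1}h_{\ell_0+1}$ avoids this loss. It gives the sharper constant $2C_\# c_{\ref{eq:lagrangedecay}}$, which is what the lemma can legitimately claim.

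You are also more careful than the paper on one point. You justify replacing $\|e_{\ell_0}\|_{\ell^\infty(X_{\le j})}$ by $\|e_{\ell_0}\|_{\ell^\infty(X_{\le \ell})}$ through the monotonicity of $X_{\le j}$, which the paper does silently.
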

\begin{proof}
    Iterating over the result of Lemma \ref{lem:barsellbound} gives
    \begin{align*}
        \varepsilon_{\ell}&= \varepsilon_{\ell-1} + \max_{\vec{x}^{(\ell)} \in X_{\ell}^{\prime}}\|s_{\ell}\|_{\ell^{\infty}(B^{(L)}(\vec{x}^{(\ell)};\magicC h_{\ell} |\log (h_{\ell})|))} \\
        & \le \varepsilon_{\ell-1} + c_{\#} c_{\ref{eq:lagrangedecay}}(1+c_{\ref{eq:lagrangedecaysum}})^{\ell-\ell_{0}-1}\left\| e_{\ell_{0}} \right\|_{\ell^{\infty}(X_{\le \ell} )} h^{\magicC \theta(k-1)-d}_{\ell}\\
        &\le \varepsilon_{\ell_{0}}+c_{\#} c_{\ref{eq:lagrangedecay}}\left\| e_{\ell_{0}} \right\|_{\ell^{\infty}(X_{\le \ell} )}\sum_{j=\ell_{0}+1}^{\ell}(1+c_{\ref{eq:lagrangedecaysum}})^{j-\ell_{0}-1} h^{\magicC \theta(k-1)-d}_{j} \\
        &\le \varepsilon_{\ell_{0}}+c_{\#} c_{\ref{eq:lagrangedecay}}\left\| e_{\ell_{0}} \right\|_{\ell^{\infty}(X_{\le \ell} )} (1+c_{\ref{eq:lagrangedecaysum}})^{-1} \sum_{j=1}^{\ell}(1+c_{\ref{eq:lagrangedecaysum}})^{j} h^{\magicC \theta(k-1)-d}_{j+\ell_{0}}.
        \end{align*}
        Using Assumption \ref{ass:pointset}, we obtain by geometric series due to \eqref{eq:magicCchoice}
        \begin{equation*}
            \sum_{j=1}^{\ell}(1+c_{\ref{eq:lagrangedecaysum}})^{j} h^{\magicC \theta(k-1)-d}_{j+\ell_{0}} \le h^{\magicC \theta(k-1)-d}_{\ell_{0}} 
            \frac{\mu^{\magicC \theta(k-1)-d}(1+c_{\ref{eq:lagrangedecaysum}})}{1-\mu^{\magicC \theta(k-1)-d}(1+c_{\ref{eq:lagrangedecaysum}})}\le
            2c_{h}h_{\ell_{0}+1}^{\magicC \theta(k-1)-d}(1+c_{\ref{eq:lagrangedecaysum}}).
        \end{equation*}
    This proves the claim.
\end{proof}

This result highlights how, among the many contributes on the recursive definition, the first terms, as expected have a major influence on the bound of further levels, due to $\mu<1$. Indeed, as outlined at the beginning of the paragraph, an effective strategy is of combining the classic scheme for the first levels and then switch to the adaptive scheme, where the computational benefits are more meaningful. This implies $h_{\ell_{0}+1} \ll 1$, thus $\varepsilon_{\ell} \approx \varepsilon_{\ell_{0}}$. Alternatively, a proper choice of $h_{1}$ does also the trick. However, small values of $h_{1}$ are associated with large sets of points, and starting with already a rich set of points work against the potential of multiscale framework. Moreover, is worth to point out that the bound is $\ell$-independent. Indeed, bounding $\left\| e_{\ell_{0}} \right\|_{\ell^{\infty}(X_{\le \ell} )}$ can be achieved by the highest value of the rhs of the multilevel interpolation problem, and $h_{\ell_{0}+1}$ can be arbitrary small with proper choices of the parameters $\magicC$ and $k$ even with respect to fixed $\ell_{0} \in \N$.

We are now in position to state the consistency result we are interested in.

\begin{theorem}\label{thm:consistency}
    Let Assumptions \ref{ass:kernel} and \ref{ass:pointset} hold. Let $\magicC>1$ and $\varepsilon_{\ell}$ as defined in \ref{def:eps}.
    Then, algorithm \ref{alg:adaptms} produces sets such that
    \begin{equation}\label{eq:consistency}
         X^{\prime}_{\ell} \subset X^{\prime}_{\ell+1}, \qquad L^{\star} \le \ell \le L-1
    \end{equation}
    holds.
\end{theorem}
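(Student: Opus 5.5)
The argument is a direct induction-free inclusion check: fix $\ell$ with $L^{\star}\le \ell\le L-1$ and $\vec{x}^{(\ell)}\in X^{\prime}_{\ell}$, and verify the defining condition of $X^{\prime}_{\ell+1}$ for the same point. Since the point sets are nested (Assumption \ref{ass:pointset}), $\vec{x}^{(\ell)}\in X_{\ell}\subset X_{\ell+1}$, so it is an admissible candidate for $X^{\prime}_{\ell+1}$. It remains to show that for every $\vec{y}\in X_{L}\cap B(\vec{x}^{(\ell)}, k\magicC h_{\ell+1}|\log(h_{\ell+1})|)$ we have $|\bar{e}_{\ell}(\vec{y})|\le \varepsilon_{\ell}$.

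The first step is geometric. By Lemma \ref{lem:Lstar}, valid because $\ell\ge L^{\star}$, we have $k h_{\ell+1}|\log(h_{\ell+1})|\le h_{\ell}|\log(h_{\ell})|$. Multiplying by $\magicC$ gives
\begin{equation*}
B(\vec{x}^{(\ell)}, k\magicC h_{\ell+1}|\log(h_{\ell+1})|)\subseteq B(\vec{x}^{(\ell)}, \magicC h_{\ell}|\log(h_{\ell})|)\subseteq B(\vec{x}^{(\ell)}, k\magicC h_{\ell}|\log(h_{\ell})|),
\end{equation*}
where the last inclusion uses $k\ge 1$. Hence every relevant $\vec{y}$ lies in $B^{(L)}(\vec{x}^{(\ell)};\magicC h_{\ell}|\log(h_{\ell})|)$, and in particular in the larger ball used to define $X^{\prime}_{\ell}$.

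The second step uses the update $\bar{e}_{\ell}=\bar{e}_{\ell-1}-\bar{s}_{\ell}$ and the triangle inequality:
\begin{equation*}
|\bar{e}_{\ell}(\vec{y})|\le |\bar{e}_{\ell-1}(\vec{y})|+|\bar{s}_{\ell}(\vec{y})|.
\end{equation*}
The first term is at most $\varepsilon_{\ell-1}$, because $\vec{x}^{(\ell)}\in X^{\prime}_{\ell}$ and $\vec{y}\in X_L$ lies in the ball of radius $k\magicC h_{\ell}|\log(h_{\ell})|$. For the second term, $\vec{y}\in B^{(L)}(\vec{x}^{(\ell)};\magicC h_{\ell}|\log(h_{\ell})|)$ with $\vec{x}^{(\ell)}\in X^{\prime}_{\ell}$, so it is bounded by $\max_{\vec{x}\in X^{\prime}_{\ell}}\|\bar{s}_{\ell}\|_{\ell^{\infty}(B^{(L)}(\vec{x};\magicC h_{\ell}|\log (h_{\ell})|))}$. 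The sum of the two bounds is exactly $\varepsilon_{\ell}$ by \eqref{eq:epsdef}, which proves $\vec{x}^{(\ell)}\in X^{\prime}_{\ell+1}$.

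I expect the only real obstacle to be aligning this with the definitions, not analysis. First, \eqref{eq:epsdef} writes $s_{\ell}$, and it must be read as the adaptive increment $\bar{s}_{\ell}$ that actually enters $\bar{e}_{\ell}$; I will state this interpretation explicitly. Second, the radius must match. The threshold increment in Definition \ref{def:eps} only controls $\bar{s}_{\ell}$ on the smaller ball of radius $\magicC h_{\ell}|\log(h_{\ell})|$, not $k\magicC h_{\ell}|\log(h_{\ell})|$. The factor $k$ in Lemma \ref{lem:Lstar} is precisely what brings the level-$(\ell+1)$ ball inside this smaller ball, which is also why the restriction $\ell\ge L^{\star}$ is needed.

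Lemmas \ref{lem:barsellbound} and \ref{lem:epsbound} are not needed for the inclusion itself. They only guarantee that the thresholds $\varepsilon_{\ell}$ stay bounded, so the consistency statement is not vacuous.
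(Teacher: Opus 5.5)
Your proof is correct and follows the paper's argument: nestedness makes $\vec{x}^{(\ell)}$ a candidate for $X^{\prime}_{\ell+1}$, and Lemma \ref{lem:Lstar} puts the level-$(\ell+1)$ ball inside $B(\vec{x}^{(\ell)},\magicC h_{\ell}|\log(h_{\ell})|)$. The triangle inequality $|\bar{e}_{\ell}|\le|\bar{e}_{\ell-1}|+|\bar{s}_{\ell}|$ is then closed by the definitions of $X^{\prime}_{\ell}$ and $\varepsilon_{\ell}$. Your remarks are accurate: only \eqref{eq:epsdef}, read with $\bar{s}_{\ell}$, is needed rather than Lemma \ref{lem:barsellbound}, and $k\ge 1$ is what gives the first bound; you also state the ball inclusion in the correct direction, whereas the paper's write-up has one inclusion reversed.
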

\begin{remark}
    Relation \eqref{eq:consistency} means that every point adaptively removed at a given step would be removed also on each subsequent iteration, hence it is safe to discard this point.
    Please note that the removal criterion \eqref{eq:adaptpoint} depends on the values of $\varepsilon_{\ell}$. Also, note that here $\ell$ takes into account also eventual classic steps, as
\end{remark}
\begin{proof}
    Given $\vec{x}^{(\ell)} \in X^{\prime}_{\ell}$, we aim to show that
    \begin{equation*}
    	\|\bar{e}_{\ell}\|_{\ell^{\infty}(B^{(L)}(\vec{x}^{(\ell)}; k \magicC h_{\ell+1} |\log (h_{\ell+1})|))} \le \varepsilon_{\ell} 
    \end{equation*}
    holds, to ensure that $\vec{x}^{(\ell)} \in X^{\prime}_{\ell+1}$. 
    Since $\ell \ge L^{\star}$ we have, due to Lemma \ref{lem:Lstar} ,
    \begin{equation*}
        X_{\ell+1} \cap B(\vec{x}^{(\ell)}; k \magicC h_{\ell+1} |\log (h_{\ell+1})| )\subset X_{\ell+1} \cap B(\vec{x}^{(\ell)}; \magicC h_{\ell} |\log (h_{\ell})|)
    \end{equation*}
    for each $\vec{x}^{(\ell)} \in X^{\prime}_{\ell} \subset X_{\ell} \subset X_{\ell+1}$.
    Indeed, we have 
    \begin{equation*}
        \|\bar{e}_{\ell}\|_{\ell^{\infty}(B^{(L)}(\vec{x}^{(\ell)}; k \magicC h_{\ell+1} |\log (h_{\ell+1})|))} \le \|\bar{e}_{\ell}\|_{\ell^{\infty}(B^{(L)}(\vec{x}^{(\ell)}; \magicC h_{\ell} |\log (h_{\ell})|))}.
    \end{equation*}
    Thus, for
    \begin{equation*}
        \vec{x}^{(L)} \in B^{(L)}(\vec{x}^{(\ell)}; \magicC h_{\ell} |\log (h_{\ell})|) \subset  B^{(L)}(\vec{x}^{(\ell)}; k \magicC h_{\ell+1} |\log (h_{\ell+1})|), 
    \end{equation*}
    Lemma \ref{lem:barsellbound} allows us to show
    \begin{align*}
    	|\bar{e}_{\ell}(\vec{x}^{(L)})| & \le |\bar{e}_{\ell-1}(\vec{x}^{(L)})|  + |\bar{s}_{\ell}(\vec{x}^{(L)})|  \\
        & \le \varepsilon_{\ell-1} + \max_{\vec{x}^{(\ell)} \in X_{\ell}^{\prime}}\|s_{\ell}\|_{\ell^{\infty}(B^{(L)}(\vec{x}^{(\ell)};\magicC h_{\ell} |\log (h_{\ell})|))},
    \end{align*}
    concluding that $|\bar{e}_{\ell}(\vec{x}^{(L)})| \le \varepsilon_{\ell}$ for $\vec{x}^{(L)} \in B^{(L)}(\vec{x}^{(\ell)}; k \magicC h_{\ell+1} |\log (h_{\ell+1})|))$. In particular, for $\vec{x}^{(\ell)} \in X_{\ell+1}$, we have $\vec{x}^{(\ell)} \in X^{\prime}_{\ell+1}$.
\end{proof}

We now analyse the special setting on which the target function has compact support, where the adaptivity can express its full potential. Indeed, a classic multilevel approach with $L$ steps would still employ points far from the support to compute the approximant, while with the adaptive version we hope to deal with just the point "close" to the support.

\begin{theorem}\label{thm:compactsupp}
    Let $r \in \R$ be such that exists $\vec{x} \in \Omega$ for which $B(r) := B_{r}(\vec{x}) \supset \overline{\{\vec{y} \in \Omega : e_{\ell_{0}}(\upsilon) \neq 0\}}$, i.e. the support of $e_{\ell_{0}}$ is contained into a ball of radius $r$ with fixed $\vec{x}$. For $L \in \N$, let also $X_{1}, \dots, X_{L}$ be a nested sequence of sets of points satisfying Assumption \ref{ass:pointset} and $\{\Phi_{\ell}\}_{1\le \ell \le L}$ satisfying Assumption \ref{ass:kernel}. Then, for every $\ell > \ell_{0}$, we have
    \begin{equation}
        X^{\prime}_{\ell} \supseteq X_{\ell} \setminus B^{(\ell)}(r+k \magicC\rho)
    \end{equation}
    with $\rho = 2 h_{\ell_{0}}\frac{\mu\log(1/\mu)+(1-\mu)\log(1/h_{\ell_{0}})}{(1-\mu)^{2}}$, for every point refinement $\check{X}_{\ell}$ computed with Algorithm \ref{alg:adaptms}.
\end{theorem}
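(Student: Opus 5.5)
The plan is to show by induction on $\ell > \ell_{0}$ that $\bar{e}_{\ell-1}$ vanishes, or is at least bounded by $\varepsilon_{\ell-1}$, on all fine points $X_{L}$ outside the ball $B_{r+\rho_{\ell-1}}(\vec{x})$. Here $\rho_{\ell-1}$ is the accumulated radius over which supports may have spread, and we will show $\rho_{\ell}\le\rho$. Once this is established, every $\vec{x}^{(\ell)}\in X_{\ell}$ with $\|\vec{x}^{(\ell)}-\vec{x}\|_{2} > r + k\magicC\rho$ has its whole neighbourhood $B(\vec{x}^{(\ell)}, k\magicC h_{\ell}|\log h_{\ell}|)$ inside the region where $|\bar{e}_{\ell-1}|\le \varepsilon_{\ell-1}$. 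By definition this gives $\vec{x}^{(\ell)}\in X^{\prime}_{\ell}$, which is the claimed inclusion $X^{\prime}_{\ell}\supseteq X_{\ell}\setminus B^{(\ell)}(r+k\magicC\rho)$.

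\emph{Base step.} At $\ell=\ell_{0}+1$ we have $\bar{e}_{\ell_{0}}=e_{\ell_{0}}$, which vanishes outside $B_{r}(\vec{x})$. Every point at distance more than $r+k\magicC h_{\ell_{0}+1}|\log h_{\ell_{0}+1}|$ from $\vec{x}$ therefore lies in $X^{\prime}_{\ell_{0}+1}$, since $0\le\varepsilon_{\ell_{0}}$.

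\emph{Inductive step.} The error changes by $\bar{s}_{\ell}=\sum_{\check{X}_{\ell}}\bar{e}_{\ell-1}(\vec{x}_{i}^{(\ell)})\chi_{i}^{(\ell)}$. Points in $\check{X}_{\ell}$ lie inside $B_{r+k\magicC\rho_{\ell-1}}(\vec{x})$ by the inductive hypothesis, since everything outside was removed. The influence of $\bar{s}_{\ell}$ is then controlled exactly as in Lemma \ref{lem:barsellbound}. At distance at least $(k-1)\magicC h_{\ell}|\log h_{\ell}|$ beyond the kept points, $|\bar{s}_{\ell}|$ is bounded by the increment $\varepsilon_{\ell}-\varepsilon_{\ell-1}$ of Definition \ref{def:eps}. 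Hence $|\bar{e}_{\ell}|\le\varepsilon_{\ell}$ there, mirroring the argument of Theorem \ref{thm:consistency}. This gives the recursion $\rho_{\ell}\le\rho_{\ell-1}+c\,h_{\ell}|\log h_{\ell}|$ with $\rho_{\ell_{0}}=0$. Alternatively, one can use the support $\operatorname{supp}\Phi_{\ell}\subset B_{\delta_{\ell}}$ together with the local Lagrange functions of Remark \ref{rk:check}, which gives the same type of additive growth $O(h_{\ell}|\log h_{\ell}|)$ per level.

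\emph{Summing the radii.} With $h_{j}\le\mu^{j-\ell_{0}}h_{\ell_{0}}$ from Assumption \ref{ass:pointset}, we have $h_{j}|\log h_{j}| \le h_{\ell_{0}}\mu^{m}\bigl(\log(1/h_{\ell_{0}})+m\log(1/\mu)\bigr)$ for $m=j-\ell_{0}$. Summing over $m\ge 0$ with $\sum_{m}\mu^{m}=(1-\mu)^{-1}$ and $\sum_{m} m\mu^{m}=\mu(1-\mu)^{-2}$ gives
\[
\sum_{m\ge0} h_{\ell_{0}}\mu^{m}\bigl(\log(1/h_{\ell_{0}})+m\log(1/\mu)\bigr) = h_{\ell_{0}}\frac{(1-\mu)\log(1/h_{\ell_{0}})+\mu\log(1/\mu)}{(1-\mu)^{2}}.
\]
This is $\rho/2$. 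The factor $2$ is there to absorb constants, for instance from $c_{h}$ or from the comparison of neighbourhoods at consecutive levels. Thus $\rho_{\ell}\le\rho$ uniformly in $\ell$.

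\emph{Main obstacle.} The hardest step is the inductive one. One must make sure that the per-level spreading of the region where $|\bar{e}|>\varepsilon$ is genuinely additive, of size about $h_{\ell}|\log h_{\ell}|$ times a constant that is absorbed into $k\magicC$ and $\rho$. One must also ensure that the thresholds $\varepsilon_{\ell}$ from Definition \ref{def:eps} dominate the exponentially small tails of the Lagrange functions, which are not compactly supported. I would handle this by reusing the distance estimate of Lemma \ref{lem:barsellbound} verbatim, with $\check{X}_{\ell}$ contained in the enlarged ball in place of its complement $X'_{\ell}$.
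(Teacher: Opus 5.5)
Your induction skeleton, base step and series computation agree with the paper's proof. The inductive step, however, rests on a claim your proposed justification cannot deliver: that $|\bar{s}_{\ell}(\vec{y})|\le\varepsilon_{\ell}-\varepsilon_{\ell-1}$ at fine points $\vec{y}$ at distance at least $(k-1)\magicC h_{\ell}|\log h_{\ell}|$ from $\check{X}_{\ell}$.

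By Definition \ref{def:eps}, the increment $\varepsilon_{\ell}-\varepsilon_{\ell-1}$ is the \emph{actual} maximum of $|\bar{s}_{\ell}|$ over the fine points in $\bigcup_{\vec{x}'\in X'_{\ell}}B(\vec{x}';\magicC h_{\ell}|\log h_{\ell}|)$. So it controls $|\bar{s}_{\ell}(\vec{y})|$ only when $\vec{y}$ lies in one of these balls. The decay estimate of Lemma \ref{lem:barsellbound} runs the other way: it is an \emph{upper} bound for the increment, which is exactly how it enters Lemma \ref{lem:epsbound}. Knowing that $|\bar{s}_{\ell}(\vec{y})|$ is below $C h_{\ell}^{\magicC\theta(k-1)-d}\|e_{\ell_{0}}\|_{\ell^{\infty}(X_{\le\ell})}$ says nothing about how it compares with an increment that may be far smaller, even zero.

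Distance from $\check{X}_{\ell}$ is also the wrong criterion. A fine point in the middle of the buffer $\bar{X}_{\ell}\setminus X'_{\ell}$ can be far from $\check{X}_{\ell}$ and still be farther than $\magicC h_{\ell}|\log h_{\ell}|$ from every point of $X'_{\ell}$. Your worry that $\varepsilon_{\ell}$ must dominate the Lagrange tails is a symptom of this wrong mechanism; the paper's argument uses no decay of $\chi^{(\ell)}_{i}$ at all. Your alternative via compactly supported local Lagrange functions concerns a different algorithm, since Algorithm \ref{alg:adaptms} uses the global cardinals $\chi^{(\ell)}_{i}$.

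The missing idea is a covering argument based on the fill distance. Write $\rho_{i}=h_{i}|\log h_{i}|$. Assume inductively that $X'_{\ell}$ contains every point of $X_{\ell}$ outside $B_{R_{\ell}}(\vec{x})$, where $R_{\ell}=r+k\magicC\rho_{\ell}+2k\magicC\sum_{i=\ell_{0}+1}^{\ell-1}\rho_{i}$.
\begin{itemize}
\item Take $\vec{y}\in X_{L}$ with $\|\vec{y}-\vec{x}\|_{2}>R_{\ell}+k\magicC\rho_{\ell}$. Choose $\vec{x}_{\star}\in X_{\ell}$ with $\|\vec{y}-\vec{x}_{\star}\|_{2}\le h_{\ell}\le\magicC\rho_{\ell}$; this uses $\magicC|\log h_{\ell}|\ge 1$, which the paper also uses tacitly.
\item Then $\vec{x}_{\star}\notin B_{R_{\ell}}(\vec{x})$, so $\vec{x}_{\star}\in X'_{\ell}$ by the inductive hypothesis.
\item Consequently $|\bar{e}_{\ell-1}(\vec{y})|\le\varepsilon_{\ell-1}$ by the definition of $X'_{\ell}$, and $|\bar{s}_{\ell}(\vec{y})|\le\varepsilon_{\ell}-\varepsilon_{\ell-1}$ directly from Definition \ref{def:eps}.
\item Hence $|\bar{e}_{\ell}|\le\varepsilon_{\ell}$ on $X_{L}$ outside the ball of radius $r+2k\magicC\sum_{i=\ell_{0}+1}^{\ell}\rho_{i}$ about $\vec{x}$. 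This yields $X'_{\ell+1}\supseteq X_{\ell+1}\setminus B_{R_{\ell+1}}(\vec{x})$.
\end{itemize}
The growth per level is therefore exactly $2k\magicC\rho_{\ell}$. One $k\magicC\rho_{\ell}$ comes from the neighbourhood in the definition of $X'_{\ell}$, and the other guarantees a nearby removed point $\vec{x}_{\star}$. This, not absorption of $c_{h}$, is where the factor $2$ in $\rho$ comes from. Since $R_{\ell}\le r+2k\magicC\sum_{i>\ell_{0}}\rho_{i}\le r+k\magicC\rho$, your series computation then finishes the proof.
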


\begin{proof}
    In the following we define $\rho_{\ell} := h_{\ell} |\log(h_{\ell})|$ and assume $\vec{x} \in \Omega$ is fixed.
    On every adaptive step we compute, for $j > \ell_{0}$,
    \begin{align*}
    	X^{\prime}_{\ell}:= \left\{\vec{x}^{(\ell)} \in X_{\ell} \, : \, \forall
        \vec{x}^{(L)} \in X_{L},  | \bar{e}_{\ell-1}(\vec{x}^{(L)}) | \le  \varepsilon_{\ell-1},  \|\vec{x}^{(L)}-\vec{x}^{(\ell)}\|_2 \le k \magicC\rho_{\ell}\right\},
    \end{align*}
    \begin{align*}
    	X^{\prime}_{\ell}\subset \bar{X}_{\ell}:= \bigcup_{\vec{x}^{(\ell)} \in X^{\prime}_{\ell}} \left( X_{\ell} \cap B(\vec{x}^{(\ell)}; k \magicC\rho_{\ell})\right),
    \end{align*}
    \begin{equation*}
        \bar{s}_{j} =\sum_{\vec{x}^{(\ell)} \in X_{j} \setminus \bar{X}_{j}} \bar{e}_{j-1} (\vec{x}^{(\ell)}_{i}) \chi^{(j)}_{i}.
    \end{equation*}

    Precisely, for $j=\ell_{0}+1$, we have that $\bar{e}_{j-1} = e_{\ell_{0}}$ vanish outside the support of $e_{\ell_{0}}$, in particular outside of $B(r)$. 
    Thus, we have that
    \begin{equation*}
        X^{\prime}_{\ell_{0}+1} \supseteq X_{\ell_{0}+1} \setminus B^{(\ell_{0}+1)}(r+k \magicC\rho_{\ell_{0}+1}), 
    \end{equation*}
    for every choice of $\varepsilon_{\ell_{0}} >0$.
    From here we iterate the process, and prove by induction the statement.
    $j=\ell_{0}+1$ is already proven; now assuming $j=\ell$ holds, i.e.,
    \begin{equation*}
        X^{\prime}_{\ell} \supseteq X_{\ell} \setminus B^{(\ell)}(r+k \magicC\rho_{\ell}+2k \magicC\sum\nolimits_{i=\ell_{0}+1}^{\ell-1}\rho_{i})),
    \end{equation*}
    we prove $j = \ell+1$. 
    First, for $X_{\ell+1} \ni \vec{y}^{(\ell+1)} \notin B^{(\ell+1)}(r+2k \magicC\sum_{i=\ell_{0}+1}^{\ell}\rho_{i})$ we have
    \begin{equation*}
        |\bar{e}_{\ell}(\vec{y}^{(\ell+1)})| = |\bar{e}_{\ell-1}(\vec{y}^{(\ell+1)}) - \bar{s}_{\ell}(\vec{y}^{(\ell+1)})|.
    \end{equation*}
    Moreover, we have that
    \begin{equation*}
        \|\vec{y}^{(\ell+1)}-\vec{x}^{(\ell)}\|_{2} > k \magicC\rho_{\ell} > \magicC \rho \quad \forall \vec{x}^{(\ell)} \in B^{(\ell)}(r+k \magicC\rho_{\ell}+2\sum\nolimits_{i=\ell_{0}+1}^{\ell-1}k \magicC\rho_{i}).
    \end{equation*}
    Thus, $\exists \, \vec{x}_{\star}^{(\ell)} \in X^{(\ell)} \cap B_{\magicC\rho_{\ell}}(\vec{y}^{(\ell+1)})$ such that
    \begin{equation*}
        \vec{x}_{\star}^{(\ell)} \notin B^{(\ell)}(r+k \magicC\rho_{\ell}+2k \magicC\sum\nolimits_{i=\ell_{0}+1}^{\ell-1}\rho_{i}),
    \end{equation*}
    which implies, by induction hypothesis, that $\vec{x}_{\star}^{(\ell)} \in X^{\prime}_{\ell}$.
    Additionally, we have $\vec{y}^{(\ell+1)} \in X_{\ell+1} \subset X_{L}$, thus, by construction,
    \begin{equation*}
        | \bar{e}_{\ell-1}(\vec{y}^{(\ell+1)}) | \le  \varepsilon_{\ell-1}, \quad \text{ and } \quad
        |\bar{s}_{\ell}(\vec{y}^{(\ell+1)})| \le \|\bar{s}_{\ell}\|_{\ell^{\infty}(B^{(L)}(\vec{x}_{\star}^{(\ell)};\magicC\rho_{\ell}))}.
    \end{equation*}
    Therefore, we have
    \begin{equation*}
        |\bar{e}_{\ell}(\vec{y}^{(\ell+1)})| \le  |\bar{e}_{\ell-1}(\vec{y}^{(\ell+1)})| + |\bar{s}_{\ell}(\vec{y}^{(\ell+1)})| \le \varepsilon_{\ell}
    \end{equation*}
    for every $\vec{y}^{(\ell+1)} \notin B^{(\ell+1)}(r+2k\magicC\sum_{i=\ell_{0}+1}^{\ell}\rho_{i})$.

    Additionally, given $X_{\ell+1} \ni \vec{x}^{(\ell+1)} \notin B^{(\ell+1)}(r+k\magicC\rho_{\ell+1}+2k\magicC\sum_{i=\ell_{0}+1}^{\ell}\rho_{i})$, we have
    \begin{equation*}
        X_{L} \cap B_{k\magicC\rho_{\ell+1}}(\vec{x}^{(\ell+1)}) \subset X_{L} \setminus B^{(L)}(r+2k\magicC\sum_{i=\ell_{0}+1}^{\ell}\rho_{i}).
    \end{equation*}
    Thus, $\vec{x}^{(\ell+1)} \in X^{\prime}_{\ell+1}$, which terminates the induction proof.
    Lastly, from Assumption \ref{ass:pointset}, it follows that
    \begin{align*}
        \sum_{i=\ell_{0}+1}^{\infty} h_{i}|\log(h_{i})| &\le \sum_{i=1}^{\infty}h_{\ell_{0}}\mu^{i} (i\log(1/\mu)+\log(1/h_{\ell_{0}})) \\
        &\le h_{\ell_{0}}\log(1/\mu)\sum_{i=1}^{\infty}i\mu^{i}+h_{\ell_{0}}\log(1/h_{\ell_{0}})\sum_{i=1}^{\infty}\mu^{i} \\
        &\le h_{\ell_{0}}\log(1/\mu)\frac{\mu}{(1-\mu)^{2}}+h_{\ell_{0}}
        \log(1/h_{\ell_{0}})\frac{1}{1-\mu} \\
        &\le h_{\ell_{0}}\frac{\mu\log(1/\mu)+(1-\mu)\log(1/h_{\ell_{0}})}{(1-\mu)^{2}}.
    \end{align*}
\end{proof}

\begin{corollary}
    Let $r \in \R$ be such that exists $\vec{x} \in \Omega$ for which $B(r) := B_{r}(\vec{x}) \supset \overline{\{\vec{y} \in \Omega : f(\upsilon) \neq 0\}}$, i.e. the support of $f$ is contained into a ball of radius $r$ with fixed $\vec{x}$. For $L \in \N$, let also $X_{1}, \dots, X_{L}$ be a nested sequence of sets of points satisfying Assumption \ref{ass:pointset} and $\{\Phi_{\ell}\}_{1\le \ell \le L}$ satisfying Assumption \ref{ass:kernel}. Then, for every $\ell > \ell_{0}$, we have
    \begin{equation}
        \check{X}_{\ell} \subseteq B^{(\ell)}(r + k \magicC\rho)
    \end{equation}
    with $\rho = 2 h_{\ell_{0}}\frac{\mu\log(1/\mu)+(1-\mu)\log(1/h_{\ell_{0}})}{(1-\mu)^{2}}$, for every point refinement $\check{X}_{\ell}$ computed with Algorithm \ref{alg:adaptms}.
\end{corollary}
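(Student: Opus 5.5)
The corollary follows from Theorem \ref{thm:compactsupp} together with the definition of the selection operator \eqref{eq:adaptpoint}. The plan has three steps.

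\textbf{Step 1: transfer the support hypothesis from $f$ to $e_{\ell_{0}}$.} The cleanest reading is the fully adaptive case $\ell_{0}=0$, where $e_{\ell_{0}}=e_0=f$. For $\ell_0>0$, the classical steps add $s_j\in W_j$, whose kernels have support radius $\delta_j$. I would therefore either read the corollary with $\ell_0=0$, or note that the support of $e_{\ell_0}$ is contained in a slightly enlarged ball and absorb this into $r$. I would state this explicitly, and I expect it to be the only delicate point. With it, the hypothesis of Theorem \ref{thm:compactsupp} holds with the same $\vec{x}$ and $r$.

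\textbf{Step 2: apply Theorem \ref{thm:compactsupp}.} For every $\ell>\ell_0$ it gives
\begin{equation*}
X^{\prime}_{\ell} \supseteq X_{\ell}\setminus B^{(\ell)}(r+k\magicC\rho).
\end{equation*}

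\textbf{Step 3: pass to $\check{X}_{\ell}$.} By construction,
\begin{equation*}
X^{\prime}_{\ell}\subset \bar{X}_{\ell}=\bigcup_{\vec{x}^{(\ell)}\in X^{\prime}_{\ell}} B^{(\ell)}(\vec{x}^{(\ell)};k\magicC h_\ell|\log h_\ell|),
\end{equation*}
since each point of $X^{\prime}_{\ell}$ lies in its own ball. Hence $\check{X}_{\ell}=X_{\ell}\setminus\bar{X}_{\ell}\subseteq X_{\ell}\setminus X^{\prime}_{\ell}$. Combining with Step 2, this gives
\begin{equation*}
\check{X}_{\ell}\subseteq X_{\ell}\setminus\bigl(X_{\ell}\setminus B^{(\ell)}(r+k\magicC\rho)\bigr)=B^{(\ell)}(r+k\magicC\rho).
\end{equation*}
This is the claim. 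Steps 2 and 3 are just set-theoretic complementation; the real content is the reduction in Step 1 of the support assumption on $f$ to that of Theorem \ref{thm:compactsupp}.
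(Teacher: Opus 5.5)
Your Steps 2 and 3 are the intended argument. The paper gives no separate proof: the corollary is Theorem \ref{thm:compactsupp} combined with $X^{\prime}_{\ell}\subseteq\bar{X}_{\ell}$, i.e.\ $\check{X}_{\ell}=X_{\ell}\setminus\bar{X}_{\ell}\subseteq X_{\ell}\setminus X^{\prime}_{\ell}\subseteq B^{(\ell)}(r+k\magicC\rho)$. Your first reading in Step 1 is also the right one. The support hypothesis concerns the function that Algorithm \ref{alg:adaptms} receives, which is $f=e_{0}$ when $\ell_{0}=0$ and $e_{\ell_{0}}$ in the paper's analysis. With that reading your proof is complete.

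The second option in Step 1, however, is wrong and should be dropped. For $\ell_{0}>0$ the support of $e_{\ell_{0}}$ is \emph{not} contained in a slightly enlarged ball.

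Each translate $\Phi_{j}(\cdot,\vec{x}^{(j)}_{n})$ is indeed supported in a ball of radius $\delta_{j}$. But $s_{j}=\mathcal{I}_{j}(e_{j-1})$ has coefficient vector $A_{j,j}^{-1}\,(e_{j-1}|_{X_{j}})$, and $A_{j,j}^{-1}$ is in general a full matrix. Lemma \ref{lem:lagrangedecay} only gives exponential decay of its entries. Equivalently, the Lagrange functions $\chi^{(j)}_{n}$ in $s_{j}=\sum_{n}e_{j-1}(\vec{x}^{(j)}_{n})\chi^{(j)}_{n}$ are not compactly supported.

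Hence already $s_{1}$, and therefore $e_{\ell_{0}}=f-\sum_{j\le\ell_{0}}s_{j}$, is generically nonzero throughout $\Omega$; it is merely exponentially small away from $B(r)$. The paper points this out itself in the numerical section.

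Moreover, even a genuine enlargement $r\mapsto r+\sum_{j\le\ell_{0}}\delta_{j}$ would only give the inclusion for a larger ball. So ``absorbing it into $r$'' would change the statement rather than prove it.

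With $\ell_{0}>0$ and a support assumption on $f$ alone, the inclusion does not follow from Theorem \ref{thm:compactsupp}. For a small starting threshold, $X^{\prime}_{\ell_{0}+1}$ can be empty, and then $\check{X}_{\ell_{0}+1}=X_{\ell_{0}+1}$.
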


Proposition \ref{thm:compactsupp} highlights the algorithm behavior in the circumstances of a localized error. Compactly supported functions requires an effective number of points that is proportional to the support rather than to the domain. Additionally, it can be seen how the choice of $\mu$ influences such limits. Indeed, $\mu$ tunes the radius of the ball that contains all the points that are actually used in the approximation. In particular, we have
\begin{equation*}
    \rho \xrightarrow{\mu \rightarrow 0} 0.   
\end{equation*}

Thus, for $\mu \rightarrow 0$, we have

\begin{equation*}
    X^{\prime}_{\ell} \supset X_{\ell} \setminus B^{(\ell)}(r), \quad \check{X}_{\ell} \subset B^{\ell}(r).
\end{equation*}

Another simple but effective result to reduce the computational time is the following.
\begin{lemma}\label{lem:checkset}
    For $L \in \N$, let also $X_{1}, \dots, X_{L}$ be a nested sequence of sets of points satisfying Assumption \ref{ass:pointset}.
    Then, using the notation of Algorithm \ref{alg:adaptms}, given $\vec{x}^{(\ell)} \in X_{\ell}$ such that $|\bar{e}_{\ell-1}(\vec{x}^{(\ell)})| > \varepsilon_{\ell-1}$, we have $\vec{x}^{(\ell)} \in \check{X}_{\ell}$.
\end{lemma}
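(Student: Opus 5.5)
We argue by contraposition. Suppose $\vec{x}^{(\ell)} \in X_{\ell}$ with $|\bar{e}_{\ell-1}(\vec{x}^{(\ell)})| > \varepsilon_{\ell-1}$, and assume, for contradiction, that $\vec{x}^{(\ell)} \notin \check{X}_{\ell}$. By \eqref{eq:adaptpoint}, $\check{X}_{\ell} = X_{\ell}\setminus \bar{X}_{\ell}$, so $\vec{x}^{(\ell)} \in \bar{X}_{\ell}$. By the definition of $\bar{X}_{\ell}$ as a union of balls, there is a point $\vec{z}^{(\ell)} \in X^{\prime}_{\ell}$ with
\begin{equation*}
\vec{x}^{(\ell)} \in B^{(\ell)}(\vec{z}^{(\ell)}; k \magicC h_{\ell} |\log(h_{\ell})|),
\end{equation*}
that is, $\|\vec{x}^{(\ell)}-\vec{z}^{(\ell)}\|_{2} \le k\magicC h_{\ell}|\log(h_{\ell})|$.

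Next we use the defining property of $X^{\prime}_{\ell}$. Since $\vec{z}^{(\ell)} \in X^{\prime}_{\ell}$, we have $|\bar{e}_{\ell-1}(\vec{x}^{(L)})| \le \varepsilon_{\ell-1}$ for every $\vec{x}^{(L)} \in X_{L} \cap B(\vec{z}^{(\ell)}, k\magicC h_{\ell}|\log(h_{\ell})|)$. The point sets are nested (Assumption \ref{ass:pointset}), so $\vec{x}^{(\ell)} \in X_{\ell} \subset X_{L}$. Together with the distance bound above, this shows that $\vec{x}^{(\ell)}$ is an admissible point $\vec{x}^{(L)}$ in that condition. Hence $|\bar{e}_{\ell-1}(\vec{x}^{(\ell)})| \le \varepsilon_{\ell-1}$, which contradicts the hypothesis. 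Therefore $\vec{x}^{(\ell)} \in \check{X}_{\ell}$.

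The argument uses no approximation estimates; it is purely set-theoretic. The only step needing care is the nestedness $X_{\ell}\subset X_{L}$, which lets us test the level-$\ell$ point against the level-$L$ condition in the definition of $X^{\prime}_{\ell}$. We also use that the ball radius in the definition of $\bar{X}_{\ell}$ is the same, $k\magicC h_{\ell}|\log(h_{\ell})|$, as the one in the definition of $X^{\prime}_{\ell}$, which holds by construction. The thresholds also agree: the definition of $X^{\prime}_{\ell}$ uses $\varepsilon_{\ell-1}$, matching the hypothesis of the lemma. For the practical consequence, the lemma allows points with large residual to be added to $\check{X}_{\ell}$ without examining any neighbourhood.
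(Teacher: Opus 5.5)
Your proof is correct and is essentially the paper's argument. Both assume $\vec{x}^{(\ell)} \in \bar{X}_{\ell}$, pick a nearby point of $X^{\prime}_{\ell}$, and apply its defining condition to $\vec{x}^{(\ell)}$ to get a contradiction. The one difference is that you state explicitly the nestedness $X_{\ell} \subset X_{L}$, which the paper uses only implicitly; also, the argument is by contradiction rather than contraposition, but this is harmless.
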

\begin{proof}
    Assume instead that $\vec{x}^{(\ell)} \notin \check{X}_{\ell} = X_{\ell} \setminus \bar{X}_{\ell}$. Thus, $\vec{x}^{(\ell)} \in \bar{X}_{\ell}$, which also implies $\exists \, \vec{y}^{(\ell)} \in X^{\prime}_{\ell}$ such that $\|\vec{x}^{(\ell)}-\vec{y}^{(\ell)}\|_{2} \le k \magicC h_{\ell}|\log(h_{\ell})|$. However, by construction, we have $| \bar{e}_{\ell-1}(\vec{x}^{(L)}) | \le  \varepsilon_{\ell-1}$ for every $\vec{x}^{(L)} \in X_{L}$ such $\|\vec{x}^{(L)}-\vec{y}^{(\ell)}\|_2 \le k \magicC h_{\ell}|\log(h_{\ell})|$, which also implies $|\bar{e}_{\ell-1}(\vec{x}^{(\ell)})| \le \varepsilon_{\ell-1}$. Therefore, the claim.
\end{proof}

We finally, address the convergence of the adaptive scheme.
Prior to stating the convergence result for the adaptive scheme, we collect an intermediate result on the error between the adaptive and the classic multiscale scheme.

\begin{lemma}\label{lem:adapt-classic}
    Under Assumption \ref{ass:kernel} and \ref{ass:pointset}, it exists a positive constant $c_{\ref{eq:adapt-classic}}$ such that
    \begin{equation}\label{eq:adapt-classic}
        \|I_{X_{\ell}, \Phi_{\ell}} E\bar{e}_{\ell-1}-E\bar{s}_{\ell}\|_{\Phi_{\ell+1}} \le c_{\ref{eq:adapt-classic}} \varepsilon_{\ell-1},
    \end{equation}
    i.e., the difference between the adaptive and the classic interpolation applied to $e_{\ell-1}$ in the $\Phi_{\ell+1}$-norm is bounded by a constant times $\varepsilon_{\ell-1}$.
\end{lemma}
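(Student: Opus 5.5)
The difference $I_{X_\ell,\Phi_\ell}\bar e_{\ell-1}-\bar s_\ell$ is itself an interpolant. Since both are written in the Lagrange basis of $W_\ell$,
\begin{equation*}
I_{X_{\ell},\Phi_{\ell}}\bar e_{\ell-1}-\bar s_{\ell}=\sum_{\vec{x}_{i}^{(\ell)}\in \bar X_{\ell}} \bar e_{\ell-1}(\vec{x}_{i}^{(\ell)})\,\chi^{(\ell)}_{i},
\end{equation*}
that is, the element of $W_\ell$ interpolating the data $\bar e_{\ell-1}\cdot\mathbf 1_{\bar X_\ell}$ on $X_\ell$. By construction of $\bar X_\ell$, every $\vec{x}_i^{(\ell)}\in\bar X_\ell$ lies within distance $k\magicC h_\ell|\log h_\ell|$ of some point of $X'_\ell$. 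Since $X_\ell\subset X_L$, the defining property of $X'_\ell$ then gives $|\bar e_{\ell-1}(\vec{x}_i^{(\ell)})|\le\varepsilon_{\ell-1}$. This is the argument of Lemma \ref{lem:checkset} in contrapositive form. So the difference is an interpolant in $W_\ell$ with coefficient vector $\vec\beta=A_{\ell,\ell}^{-1}\vec d$, where $\|\vec d\|_{\ell^\infty}\le\varepsilon_{\ell-1}$.

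The plan is to bound the $\Phi_{\ell+1}$-norm directly, working with the function $g\in W_\ell$ on $\R^d$ itself. Here $E g$ can be replaced by $g$, or bounded through Proposition \ref{prop:extension} and the equivalence of the $\Phi_{\ell+1}$-norm with $H^\tau$, at the cost of $\delta_{\ell+1}^{-\tau}$-type constants that are fixed once $\ell$ is fixed. Using \eqref{eq:deltafourier}, for $g\in W_\ell$ we have
\begin{equation*}
\|g\|_{\Phi_{\ell+1}}^2\le c_\Phi^{-1}\int|\widehat g|^2(1+\delta_{\ell+1}^2\|\vec\omega\|^2)^{\tau}\le c_\Phi^{-1}\int|\widehat g|^2(1+\delta_{\ell}^2\|\vec\omega\|^2)^{\tau}\le \frac{C_\Phi}{c_\Phi}\|g\|_{\Phi_\ell}^2,
\end{equation*}
because $\delta_{\ell+1}<\delta_\ell$. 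It therefore suffices to bound $\|g\|_{\Phi_\ell}^2=\vec\beta^T A_{\ell,\ell}\vec\beta=\vec d^T A_{\ell,\ell}^{-1}\vec d$.

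Next, $\vec d^TA_{\ell,\ell}^{-1}\vec d\le\|\vec d\|_{\ell^\infty}^2\sum_{i,j}|(A_{\ell,\ell}^{-1})_{ij}|\le c_{\ref{eq:lagrangedecayAsum}}\,\varepsilon_{\ell-1}^2$ by Corollary \ref{cor:lagrangedecaysum}. This yields the claim with $c_{\ref{eq:adapt-classic}}=(C_\Phi c_{\ref{eq:lagrangedecayAsum}}/c_\Phi)^{1/2}$, possibly times the extension constant $C_\tau$ and the norm-equivalence factors if $E$ has to be handled explicitly. Note that Corollary \ref{cor:lagrangedecayAsum} is level independent precisely because of the $q_\ell^d$ factor in \eqref{eq:lagrangedecayA}.

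The main obstacle is the role of $E$: $E$ acts on functions on $\Omega$, and $E g$ need not equal $g$. The cleanest route is to interpret the statement with $g$ itself as its own extension, since $g\in H^\tau(\R^d)$ already. If a genuine $E$ is required, I would bound $\|Eg\|_{\Phi_{\ell+1}}\le c_\Phi^{-1/2}\|Eg\|_{H^\tau}\le c_\Phi^{-1/2}C_\tau\|g\|_{H^\tau(\Omega)}\le c_\Phi^{-1/2}C_\tau C_\Phi^{1/2}\delta_\ell^{-\tau}\|g\|_{\Phi_\ell}$. This version picks up a $\delta_\ell^{-\tau}$ factor, which the statement hides in the constant.
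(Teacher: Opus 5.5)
Your proposal is correct and follows essentially the paper's route. You write the difference as $\sum_{\vec{x}_i^{(\ell)}\in\bar X_\ell}\bar e_{\ell-1}(\vec{x}_i^{(\ell)})\chi_i^{(\ell)}$; the paper, like your main route, tacitly identifies $E\bar s_\ell$ with $\bar s_\ell$. You then pass from the $\Phi_{\ell+1}$-norm to the $\Phi_\ell$-norm at the cost of a $C_\Phi/c_\Phi$-type factor, and bound the resulting quadratic form in $A_{\ell,\ell}^{-1}$ by Corollary \ref{cor:lagrangedecaysum}.

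Your estimate $\vec d^{T}A_{\ell,\ell}^{-1}\vec d\le\|\vec d\|_{\ell^\infty}^2\sum_{i,j}|(A_{\ell,\ell}^{-1})_{ij}|$ is the rigorous version of the paper's step of pulling $\varepsilon_{\ell-1}$ out of the Hilbert norm, which does not hold in general. You should, however, drop the fallback with a genuine $E$: its $\delta_\ell^{-\tau}$ factor makes the constant level-dependent, and that would break the iteration in Theorem \ref{thm:adapt_errorbound}.
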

\begin{proof}
    The main idea of the proof comes from \cite{LeGia:Wendland:2014}. 
    Recalling the definition of $X^{\prime}_{\ell}$ and $\bar{X}_{\ell}$, we have
    \begin{align*}
        \|I_{X_{\ell}, \Phi_{\ell}} E\bar{e}_{\ell-1}-E\bar{s}_{\ell}\|_{\Phi_{\ell+1}} & = \left\|\sum\nolimits_{\vec{x}_{i}^{(\ell)} \in X_{\ell}} \bar{e}_{\ell-1}(\vec{x}_{i}^{(\ell)})\chi^{(\ell)}_{i} -\sum\nolimits_{\vec{x}_{i}^{(\ell)} \in X_{\ell} \setminus \bar{X}_{\ell}} \bar{e}_{\ell-1}(\vec{x}_{i}^{(\ell)})\chi^{(\ell)}_{i} \right\|_{\Phi_{\ell+1}}\\
        &= \left\|\sum\nolimits_{\vec{x}_{i}^{(\ell)} \in \bar{X}_{\ell}} \bar{e}_{\ell-1}(\vec{x}_{i}^{(\ell)})\chi^{(\ell)}_{i}\right\|_{\Phi_{\ell+1}} \le \varepsilon_{\ell-1} \left\|\sum\nolimits_{\vec{x}_{i}^{(\ell)} \in \bar{X}_{\ell}}\chi^{(\ell)}_{i}\right\|_{\Phi_{\ell+1}}\\
        & \le \left(\frac{C_{\Phi}}{c_{\Phi}}\right) \varepsilon_{\ell-1} \left\|\sum\nolimits_{\vec{x}_{i}^{(\ell)} \in \bar{X}_{\ell}}\chi^{(\ell)}_{i}\right\|_{\Phi_{\ell}},
    \end{align*}
    where the last step is motivated by \cite[Lemma 2.1]{LeGia:Wendland:2014}
    Finally, we obtain
    \begin{align*}
         \left\|\sum\nolimits_{\vec{x}_{i}^{(\ell)} \in \bar{X}_{\ell}}\chi^{(\ell)}_{i}\right\|_{\Phi_{\ell}}^{2} & = \left\|\sum\nolimits_{i,j=1}^{N(\ell)} \left(A^{-1}\right)_{i,j}\Phi_{\ell}(\vec{x}_{j}^{(\ell)}, \cdot)\right\|_{\Phi_{\ell}}^{2} \\
         & = \sum\nolimits_{i,j=1}^{N(\ell)} \sum\nolimits_{h,k=1}^{N(\ell)} \left(A^{-1}\right)_{i,j} \left(A^{-1}\right)_{h,k} \Phi_{\ell}(\vec{x}_{j}^{(\ell)}, \vec{x}_{k}^{(\ell)}) \\
         & = \sum\nolimits_{i,j=1}^{N(\ell)} \sum\nolimits_{h=1}^{N(\ell)} \left(A^{-1}\right)_{i,j} \chi^{(\ell)}_{h}(\vec{x}_{j}^{(\ell)}) \\
         & = \sum\nolimits_{i,j=1}^{N(\ell)} \left(A^{-1}\right)_{i,j},
    \end{align*}
    and, recalling Corollary \ref{cor:lagrangedecaysum}, we get
    \begin{equation*}
        \left\|\sum\nolimits_{\vec{x}_{i}^{(\ell)} \in \bar{X}_{\ell}}\chi^{(\ell)}_{i}\right\|_{\Phi_{\ell}}^{2} \le c_{\ref{eq:lagrangedecayAsum}}.
    \end{equation*}
    Putting everything together and grouping some constant under $c_{\ref{eq:adapt-classic}}^{2} = c_{\ref{eq:lagrangedecayAsum}}\left(\frac{C_{\Phi}}{c_{\Phi}}\right)^{2}$, concludes the proof.
\end{proof}

Finally, we can state the convergence of the adaptive multiscale approximation scheme:
\begin{theorem}\label{thm:adapt_errorbound}
    Under Assumption \ref{ass:kernel} and \ref{ass:pointset}, exist positive constants $C_{1}, c_{\ref{eq:adapt-classic}}$ and a sequence $\varepsilon_{\ell} > 0$, such that
    \begin{equation}\label{eq:adapterrorboundPhi}
        \|E\bar{e}_{\ell}\|_{\Phi_{\ell+1}} \le \alpha \|E\bar{e}_{\ell-1}\|_{\Phi_{\ell}} + c_{\ref{eq:adapt-classic}}\varepsilon_{\ell-1},
    \end{equation}    
    where $\alpha = C_{1}\mu^{\tau}$.
    Additionally, after $\ell \in \N$ steps, the error of the adaptive multiscale approximation can be bounded by
    \begin{equation}\label{eq:adapterrorbound}
        \|e_{\ell_{0}} - \bar{f}_{\ell}\|_{L^{2}(\Omega)} \le  c_{\ref{eq:adapterrorboundPhi}} \alpha^{\ell-\ell_{0}} \|e_{\ell_{0}}\|_{\Phi_{\ell_{0}+1}} + c_{\ref{eq:adapterrorbound}}\varepsilon_{\ell-1} \frac{1-\alpha^{\ell-\ell_{0}}}{1-\alpha}.
    \end{equation}
\end{theorem}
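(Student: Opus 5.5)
The plan is to split the adaptive step into a classic interpolation step and a perturbation, then iterate the resulting one-step bound. Write
\begin{equation*}
E\bar{e}_{\ell} = E\bar{e}_{\ell-1} - E\bar{s}_{\ell} = \left(E\bar{e}_{\ell-1} - I_{X_{\ell},\Phi_{\ell}}E\bar{e}_{\ell-1}\right) + \left(I_{X_{\ell},\Phi_{\ell}}E\bar{e}_{\ell-1} - E\bar{s}_{\ell}\right).
\end{equation*}
Taking the $\Phi_{\ell+1}$-norm and using the triangle inequality gives two terms. The first is exactly one step of the classical multiscale scheme applied to the function $\bar{e}_{\ell-1}$. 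The one-step contraction behind Theorem \ref{thm:errorbound} (from \cite[Theorem 1]{Wendland:2010}) holds for an arbitrary input function, not only for the particular sequence $e_{j}$. Hence it bounds the first term by $\alpha\|E\bar{e}_{\ell-1}\|_{\Phi_{\ell}}$ with $\alpha = C_{1}\mu^{\tau}$. That argument uses the interpolation error of $E\bar e_{\ell-1}$ on $X_\ell$, the zero lemma, and the norm equivalence \eqref{eq:deltafourier} between the $\Phi_{\ell}$- and $\Phi_{\ell+1}$-norms. The second term is precisely what Lemma \ref{lem:adapt-classic} controls, giving $c_{\ref{eq:adapt-classic}}\varepsilon_{\ell-1}$. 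Together these yield \eqref{eq:adapterrorboundPhi}.

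Next I unroll this recursion from $\ell$ down to $\ell_{0}+1$, using $\bar{e}_{\ell_{0}} = e_{\ell_{0}}$. This gives
\begin{equation*}
\|E\bar{e}_{\ell}\|_{\Phi_{\ell+1}} \le \alpha^{\ell-\ell_{0}}\|Ee_{\ell_{0}}\|_{\Phi_{\ell_{0}+1}} + c_{\ref{eq:adapt-classic}}\sum_{j=\ell_{0}+1}^{\ell}\alpha^{\ell-j}\varepsilon_{j-1}.
\end{equation*}
By Definition \ref{def:eps}, the sequence $\varepsilon_{j}$ is nondecreasing, so each $\varepsilon_{j-1}$ with $j \le \ell$ is at most $\varepsilon_{\ell-1}$. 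The remaining sum is then the geometric series $\sum_{i=0}^{\ell-\ell_{0}-1}\alpha^{i} = (1-\alpha^{\ell-\ell_{0}})/(1-\alpha)$.

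To pass to $L^{2}(\Omega)$, note that $e_{\ell_{0}} - \bar{f}_{\ell} = \bar{e}_{\ell}$ on $\Omega$. Then
\begin{equation*}
\|\bar{e}_{\ell}\|_{L^{2}(\Omega)} \le \|E\bar{e}_{\ell}\|_{L^{2}(\R^{d})} \le \|E\bar{e}_{\ell}\|_{H^{0}} \le C_{\Phi}^{1/2}\|E\bar{e}_{\ell}\|_{\Phi_{\ell+1}},
\end{equation*}
where the last step uses the Fourier bound \eqref{eq:deltafourier} with $\widehat{\Phi_{\delta}} \le C_{\Phi}$. This introduces the constants $c_{\ref{eq:adapterrorboundPhi}} = C_{\Phi}^{1/2}$ and $c_{\ref{eq:adapterrorbound}} = C_{\Phi}^{1/2}c_{\ref{eq:adapt-classic}}$. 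Finally, $\|Ee_{\ell_{0}}\|_{\Phi_{\ell_{0}+1}}$ appears in the statement as $\|e_{\ell_{0}}\|_{\Phi_{\ell_{0}+1}}$, which I read as the extension norm. One could alternatively keep track of the factor $\delta_{\ell+1}^{\tau}$, which would only sharpen the bound.

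The main obstacle is the first term. I must check that the classical one-step estimate really applies to $\bar{e}_{\ell-1}$, which need not be of the form $e_{\ell-1}$. I also need the subtlety that $E(\bar{e}_{\ell-1} - I\bar{e}_{\ell-1})$ and $E\bar{e}_{\ell-1} - I E\bar{e}_{\ell-1}$ agree on $\Omega$ but not on $\R^{d}$. Following Wendland, I would take the extension of the restricted residual and use the bound $\|E g\|_{H^\tau}\le C\|g\|_{H^\tau(\Omega)}$ together with the zero lemma on $\Omega$. This absorbs the mismatch into $C_{1}$ and gives the same $\alpha$, so \eqref{eq:adapterrorboundPhi} should be read with this extension convention.
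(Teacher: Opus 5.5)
\textbf{How your route compares.} Your derivation of the recursion \eqref{eq:adapterrorboundPhi} is the paper's. It uses the same add-and-subtract of $I_{X_{\ell},\Phi_{\ell}}E\bar{e}_{\ell-1}$, the one-step contraction behind Theorem \ref{thm:errorbound} (applied to the arbitrary input $\bar{e}_{\ell-1}\in H^{\tau}(\Omega)$) for the first piece, and Lemma \ref{lem:adapt-classic} for the second.

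For the $L^{2}$ bound \eqref{eq:adapterrorbound} you take a different and shorter route. The paper re-splits $\bar{e}_{\ell}=(\bar{e}_{\ell-1}-I_{X_{\ell},\Phi_{\ell}}\bar{e}_{\ell-1})+(I_{X_{\ell},\Phi_{\ell}}\bar{e}_{\ell-1}-\bar{s}_{\ell})$ directly in $L^{2}(\Omega)$. It treats the first part with the zeros lemma, the norm equivalence and the one-step contraction. It bounds the discarded sum $\sum_{\bar{X}_{\ell}}\bar{e}_{\ell-1}(\vec{x}_{i}^{(\ell)})\chi_{i}^{(\ell)}$ term by term through $L^{2}$-norms of Lagrange functions. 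Only then does it unroll \eqref{eq:adapterrorboundPhi} over the remaining $\ell-\ell_{0}-1$ steps.

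You instead unroll the native-space recursion all the way and finish with $\|g\|_{L_{2}(\R^{d})}\le C_{\Phi}^{1/2}\|g\|_{\Phi_{\ell+1}}$, which is immediate from $\widehat{\Phi_{\delta}}\le C_{\Phi}$. This needs neither the zeros lemma nor a separate $L^{2}$ estimate of the discarded part. It also makes $c_{\ref{eq:adapterrorboundPhi}}=C_{\Phi}^{1/2}$ and $c_{\ref{eq:adapterrorbound}}=C_{\Phi}^{1/2}c_{\ref{eq:adapt-classic}}$ explicit.

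Your route also avoids a weak spot in the paper's. There $\#\bar{X}_{\ell}$ is bounded by $C_{\#}h_{\ell}^{d}$, but it can be of order $h_{\ell}^{-d}$. With $\|\chi_{i}^{(\ell)}\|_{L_{2}}\lesssim q_{\ell}^{d/2}$, the term-by-term estimate then only gives $O(h_{\ell}^{-d/2})\,\varepsilon_{\ell-1}$. It could be repaired, e.g.\ via \eqref{eq:lagrangedecaysum} and $\|\cdot\|_{L_{2}(\Omega)}\le|\Omega|^{1/2}\|\cdot\|_{L_{\infty}(\Omega)}$. The paper's route would only pay off if the discarded part were controlled in $L_{2}$ or $L_{\infty}$ but not in a native-space norm; here both give the same form of bound.

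\textbf{The extension issue.} Your remark addresses something the paper glosses over. Its identity $Ee_{j}=Ee_{j-1}-I_{X_{j},\Phi_{j}}Ee_{j-1}$ holds only on $\Omega$. Outside $\Omega+B_{\delta_{\ell}}(0)$ the function $E\bar{e}_{\ell-1}-I_{X_{\ell},\Phi_{\ell}}E\bar{e}_{\ell-1}$ is just $E\bar{e}_{\ell-1}$, so no zeros lemma is available there.

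Be precise about where the mismatch goes, though. If you write the first piece as $E(\bar{e}_{\ell-1}-I_{X_{\ell},\Phi_{\ell}}\bar{e}_{\ell-1})$, Wendland's estimate applies verbatim and nothing is absorbed into $C_{1}$. Instead the second piece becomes $E(u|_{\Omega})$ with $u=\sum_{\bar{X}_{\ell}}\bar{e}_{\ell-1}(\vec{x}_{i}^{(\ell)})\chi_{i}^{(\ell)}$. That is not literally what Lemma \ref{lem:adapt-classic} bounds; its proof bounds $\|u\|_{\Phi_{\ell+1}}$.

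The missing line is the uniform estimate $\|E(u|_{\Omega})\|_{\Phi_{\ell+1}}\le C\|u\|_{\Phi_{\ell+1}}$. It follows from the boundedness of $E$ on $L_{2}$ and on $H^{\tau}$ (Proposition \ref{prop:extension}): $\|E(u|_{\Omega})\|_{\Phi_{\ell+1}}^{2}\lesssim\|u\|_{L_{2}(\Omega)}^{2}+\delta_{\ell+1}^{2\tau}\|u\|_{H^{\tau}(\Omega)}^{2}\lesssim\|u\|_{\Phi_{\ell+1}}^{2}$, uniformly in $\ell$ because $\delta_{\ell+1}\le\delta_{1}$. That constant goes into $c_{\ref{eq:adapt-classic}}$, and with it your argument is complete.
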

\begin{proof}
    Following \cite{LeGia:Wendland:2014} we can split the error 
    \begin{align*}
        \|E\bar{e}_{\ell}\|_{\Phi_{\ell+1}} & = \|E\bar{e}_{\ell-1}-E\bar{s}_{\ell}\|_{\Phi_{\ell+1}} \\
        & \le \|E\bar{e}_{\ell-1}-I_{X_{\ell}, \Phi_{\ell}} E\bar{e}_{\ell-1}\|_{\Phi_{\ell+1}} + \|I_{X_{\ell}, \Phi_{\ell}} E\bar{e}_{\ell-1}-E\bar{s}_{\ell}\|_{\Phi_{\ell+1}}.
    \end{align*}    
    We are now left to deal two pieces, where the latter is bound by Lemma \ref{lem:adapt-classic}. 
    On the other hand, since  $E$ is a linear operator, we have
    \begin{equation*}
         Ee_{j} = Ee_{j-1} - Es_{j} = Ee_{j-1} - I_{X_{j}, \Phi_{j}} Ee_{j-1}.
    \end{equation*}
    Additionally, $\bar{e}_{\ell-1} \in H^{\tau}(\Omega)$, thus we can employ the first result of theorem \ref{thm:errorbound} in our estimate and bound the former 
    \begin{equation*}
        \|E\bar{e}_{\ell-1}-I_{X_{\ell}, \Phi_{\ell}} E\bar{e}_{\ell-1}\|_{\Phi_{\ell+1}} \le \alpha \|E\bar{e}_{\ell-1}\|_{\Phi_{\ell}},
    \end{equation*}
    providing the desired result. Indeed,
    \begin{equation*}
        \|E\bar{e}_{\ell}\|_{\Phi_{\ell+1}} \le \alpha \|E\bar{e}_{\ell-1}\|_{\Phi_{\ell}} +  c_{\ref{eq:adapt-classic}}\varepsilon_{\ell-1},
    \end{equation*}
    holds, with $\alpha = C_{1}\mu^{\tau}$.
    Moreover, this implies
    \begin{align*}
        \|e_{\ell_{0}} - \bar{f}_{\ell}\|_{L^{2}(\Omega)} & \le \|\bar{e}_{\ell-1} -\bar{s}_{\ell}\|_{L^{2}(\Omega)} \\
        & \le \|\bar{e}_{\ell-1} - I_{X_{\ell}, \Phi_{\ell}} \bar{e}_{\ell-1}\|_{L^{2}(\Omega)} + \|I_{X_{\ell}, \Phi_{\ell}} \bar{e}_{\ell-1} -\bar{s}_{\ell}\|_{L^{2}(\Omega)}.
    \end{align*}
    We can bound the first term of the right hand side in the standard way, i.e. using a zeros-type inequality and the norm equivalence of $H^{\tau}$ with $\mathcal{N}_{\Phi_{\ell}}$ (see e.g. \cite{Wendland:2010}),
    \begin{align*}
        \|\bar{e}_{\ell-1} -I_{X_{\ell}, \Phi_{\ell}} \bar{e}_{\ell-1}\|_{L^{2}(\Omega)} & \le c_{b} h_{\ell}^{\tau} \|\bar{e}_{\ell-1} -I_{X_{\ell}, \Phi_{\ell}} \bar{e}_{\ell-1}\|_{H^{\tau}(\Omega)} \\
        &\le c_{b} h_{\ell}^{\tau} \|E\bar{e}_{\ell-1} -I_{X_{\ell}, \Phi_{\ell}} E\bar{e}_{\ell-1}\|_{H^{\tau}(\R^{d})} \\
        & \le c_{b} C_{\Phi}^{1/2} h_{\ell}^{\tau}/\delta_{\ell+1}^{\tau} \|E\bar{e}_{\ell-1} -I_{X_{\ell}, \Phi_{\ell}} E\bar{e}_{\ell-1}\|_{\Phi_{\ell+1}} \\
        & \le c_{b} C_{\Phi}^{1/2} (c_{h}\gamma)^{-\tau} \alpha \|E\bar{e}_{\ell-1}\|_{\Phi_{\ell}} = c_{\ref{eq:adapterrorboundPhi}} \alpha \|E\bar{e}_{\ell-1}\|_{\Phi_{\ell}}.
    \end{align*}
    On the other hand, we have
    \begin{equation*}
        \|I_{X_{\ell}, \Phi_{\ell}}\bar{e}_{\ell-1} -\bar{s}_{\ell}\|_{L^{2}(\Omega)} = \left\|\sum\nolimits_{\vec{x} \in \bar{X}_{\ell}} \bar{e}_{\ell-1}(\vec{x})\chi^{(\ell)}_{\vec{x}}\right\|_{L^{2}(\Omega)} \le \sum_{\vec{x}_{i}^{(\ell)} \in \bar{X}_{\ell}} |\bar{e}_{\ell-1}(\vec{x}_{i}^{(\ell)})| \|\chi^{(\ell)}_{i}\|_{L^{2}(\Omega)}.
    \end{equation*}
    To bound $\|\chi^{(\ell)}_{i}\|_{L^{2}(\Omega)}$ we can exploit the counting argument of Narcowich and Ward introduced in Corollary \ref{cor:lagrangedecaysum}. Indeed,
    \begin{align*}
        \|\chi^{(\ell)}_{i}&\|_{L^{2}(\Omega)}^{2} = \int_{\Omega} |\chi^{(\ell)}_{i}(\vec{x}_{i}^{(\ell)})|^{2} d\vec{x} \le \int_{\Omega} c_{\ref{eq:lagrangedecay}}^{2} e^{-2\theta\|\vec{x}_{i}^{(\ell)}-\vec{x}\|_{2}/q_{\ell}} d\vec{x} \\
         & \le \sum_{m=0}^{\infty} \int_{\mathcal{A}_{\vec{x}}^{(\ell)}(m)} c_{\ref{eq:lagrangedecay}}^{2} e^{-2\theta\|\vec{x}_{i}^{(\ell)}-\vec{x}\|/q_{\ell}} d\vec{x} \le c_{\ref{eq:lagrangedecay}}\sum_{m=0}^{\infty} \text{Vol}(\mathcal{A}_{\vec{x}}^{(\ell)}(m)) e^{-2\theta m} \\
         & \le c_{\ref{eq:lagrangedecay}} q_{\ell}^{d}\sigma_{d-1}\sum_{m=0}^{\infty} ((m+1)^{d}-m^{d}) e^{-2\theta m}.
    \end{align*}
    From here is sufficient to notice that
    \begin{equation*}
        \sum_{m=0}^{\infty} ((m+1)^{d}-m^{d}) e^{-2\theta m} \le \sum_{m=0}^{\infty} d (m+1)^{d-1} e^{-2\theta m} \le c_{\Sigma}
    \end{equation*}
    converge to a finite constant, which depends on $d$, $\theta$. Finally, recalling that $|\bar{e}_{\ell-1}|$ is bounded by $\varepsilon
    _{\ell-1}$ by construction, we have
    \begin{equation*}
         \|I_{X_{\ell}, \Phi_{\ell}}\bar{e}_{\ell-1} -\bar{s}_{\ell}\|_{L^{2}(\Omega)} \le c_{\ref{eq:lagrangedecay}} c_{\Sigma}^{1/2} \sigma_{d-1}C_{\#} \varepsilon_{\ell-1} = c_{\star}\varepsilon_{\ell-1},
    \end{equation*}
    where we bounded $\#(\bar{X}_{\ell})$ with $C_{\#}h_{\ell}^{d}$.
    Finally, we have
    \begin{equation*}
         \|e_{\ell_{0}} - \bar{f}_{\ell}\|_{L^{2}(\Omega)} \le c_{\ref{eq:adapterrorboundPhi}} \alpha \|\bar{e}_{\ell-1}\|_{\Phi_{\ell}} +c_{\star}\varepsilon_{\ell-1}.
    \end{equation*}
    Lastly, we can apply equation \eqref{eq:adapterrorboundPhi} $\ell-\ell_{0}-1$ times and get
    \begin{align*}
        \|e_{\ell_{0}} - \bar{f}_{\ell}\|_{L^{2}(\Omega)} &\le c_{\ref{eq:adapterrorboundPhi}} \alpha^{\ell-\ell_{0}} \|e_{\ell_{0}}\|_{\Phi_{\ell_{0}+1}} + c_{\ref{eq:adapterrorboundPhi}} c_{\ref{eq:adapt-classic}} \sum_{i=1}^{\ell-\ell_{0}-1}\alpha^{i}\varepsilon_{\ell-1-i} +c_{\star}\varepsilon_{\ell-1} \\
        &\le c_{\ref{eq:adapterrorboundPhi}} \alpha^{\ell-\ell_{0}} \|e_{\ell_{0}}\|_{\Phi_{\ell_{0}+1}} + \max(c_{\ref{eq:adapterrorboundPhi}} c_{\ref{eq:adapt-classic}}, c_{\star}) \sum_{i=0}^{\ell-\ell_{0}-1}\alpha^{i}\varepsilon_{\ell-1-i} \\
        & \le c_{\ref{eq:adapterrorboundPhi}} \alpha^{\ell-\ell_{0}} \|e_{\ell_{0}}\|_{\Phi_{\ell_{0}+1}} + \max(c_{\ref{eq:adapterrorboundPhi}} c_{\ref{eq:adapt-classic}}, c_{\star}) \varepsilon_{\ell-1} \frac{1-\alpha^{\ell-\ell_{0}}}{1-\alpha}\\
        & = c_{\ref{eq:adapterrorboundPhi}} \alpha^{\ell-\ell_{0}} \|e_{\ell_{0}}\|_{\Phi_{\ell_{0}+1}} + c_{\ref{eq:adapterrorbound}}\varepsilon_{\ell-1} \frac{1-\alpha^{\ell-\ell_{0}}}{1-\alpha},
    \end{align*}
    which concludes the proof.
\end{proof}

Theorem \ref{thm:adapt_errorbound} finally highlights the effect of adaptivity in the scheme. Indeed, recalling \ref{lem:epsbound}, we have, for $C_{1}\mu^{\tau} < 1$,
\begin{equation}\label{eq:adapt_bound_implications}
    c_{\ref{eq:adapterrorboundPhi}} \alpha^{\ell-\ell_{0}} \|f\|_{\Phi_{\ell_{0}+1}} + c_{\ref{eq:adapterrorbound}}\varepsilon_{\ell-1} \frac{1-\alpha^{\ell-\ell_{0}}}{1-\alpha} \xrightarrow{\quad \ell \rightarrow \infty \quad} \frac{c_{\ref{eq:adapterrorbound}}}{1-\alpha}\varepsilon_{\ell-1},
\end{equation}
which is bounded by
\begin{equation*}
    \frac{c_{\ref{eq:adapterrorbound}}}{1-\alpha}\varepsilon_{\ell-1} \le \frac{c_{\ref{eq:adapterrorbound}}}{1-\alpha}(\varepsilon_{0}+c_{\ref{eq:adapt_bound_implications}}\left\| e_{\ell_{0}} \right\|_{\ell^{\infty}(X_{\le \ell} )}h_{\ell_{0}+1}^{\magicC \theta(k-1)-d}).
\end{equation*}

Indeed, the bound ultimately behaves as $\frac{c_{\ref{eq:adapterrorbound}}}{1-\alpha}\varepsilon_{0}$ for $h_{\ell_{0}+1}$ small enough. Therefore, the choice of the parameter $\varepsilon_{0}$ affects the overall accuracy of the scheme. 

Although the algorithm \ref{alg:adaptms} update the solution using a selection of the points available at a given level, the computation of cardinal functions still require the full set. Indeed, to compute $\chi_{i}^{(\ell)}$ with $\vec{x}_{i}^{(\ell)} \in X_{\ell} \setminus X^{\prime}_{\ell}$ still require to solve a $N(\ell) \times N(\ell)$ system. Thus, the introduction of $\check{\chi}$ is motivated by the need to rely only on the selected points as pointed out by remark \ref{rk:check}. Thus, a natural question is how the choice of local cardinal affects the quality of the approximation.

To end the theoretical analysis of \ref{alg:adaptms}, we state
\begin{theorem}
    \label{thm:localerror}
    Let $\check{\chi}^{(\ell)}_{i}$ be the cardinal function where the Lagrange conditions are enforced only in $B^{(\ell)}(\vec{x}_{i}^{(\ell)}; \varrho h_{\ell} |\log h_{\ell}|)$, with $\varrho < k \magicC$. Then, the execution algorithm \ref{alg:adaptms} using local cardinals instead of traditional cardinals, for $\ell > \ell_{0}$, produces local approximations 
    \begin{align*}
        \check{s}_{\ell} &= \sum\nolimits_{\vec{x}_{i}^{(\ell)} \in X_{\ell}\setminus \bar{X}_{\ell}} \check{e}_{\ell-1}(\vec{x}_{i}^{(\ell)}) \check{\chi}_{i} \\
        \check{e}_{\ell} &= \check{e}_{\ell-1}-\check{s}_{\ell}, \quad \check{e}_{\ell_{0}} = e_{\ell_{0}} \\
        \check{f}_{\ell} &= \check{f}_{\ell-1}+\check{s}_{\ell}, \quad \check{f}_{\ell_{0}} = f_{\ell_{0}}
    \end{align*}
    that satisfies 
    \begin{equation}\label{eq:approxcheckvsbar}
        \|\bar{s}_{\ell} - \check{s}_{\ell}\|_{L_{\infty}(\Omega)} \le (1+c_{\ref{eq:lagrangedecaysum}})^{\ell-\ell_{0}-1}\left\| e_{\ell_{0}} \right\|_{\ell^{\infty}(X_{\le \ell})} c_{\ref{eq:localcardinal-inf}} C_{\#} h_{\ell}^{\theta \varrho -2d} + c_{\ref{eq:lagrangedecaysum}}\|\check{e}_{\ell-1}-\bar{e}_{\ell-1}\|_{L_{\infty}(\Omega)}.
    \end{equation}
    Furthermore, assuming $\theta \varrho > 2d$,
    \begin{equation}\label{eq:errorcheckvsbar}
        \|\bar{e}_{\ell} - \check{e}_{\ell}\|_{L_{\infty}(\Omega)} \le c_{\ref{eq:localcardinal-inf}} C_{\#}(1+c_{\ref{eq:lagrangedecaysum}})^{\ell-\ell_{0}-1} \left\| e_{\ell_{0}} \right\|_{\ell^{\infty}(X_{\le \ell})} 2c_{h} h_{\ell_{0}+1}^{\theta \varrho -2d}
    \end{equation}
    holds.
\end{theorem}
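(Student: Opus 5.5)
The plan is to compare $\bar{s}_{\ell}$ and $\check{s}_{\ell}$ term by term, by adding and subtracting the mixed quantity $\sum_{\vec{x}_{i}^{(\ell)} \in \check{X}_{\ell}} \bar{e}_{\ell-1}(\vec{x}_{i}^{(\ell)}) \check{\chi}^{(\ell)}_{i}$. An equally convenient alternative is to insert $\sum \check{e}_{\ell-1}(\vec{x}_{i}^{(\ell)})\chi^{(\ell)}_{i}$, since we know Lebesgue-type bounds for the true cardinals. Using the latter gives
\begin{equation*}
\bar{s}_{\ell}-\check{s}_{\ell} = \sum_{\vec{x}_{i}^{(\ell)} \in \check{X}_{\ell}} \bar{e}_{\ell-1}(\vec{x}_{i}^{(\ell)})\bigl(\chi^{(\ell)}_{i}-\check{\chi}^{(\ell)}_{i}\bigr) + \sum_{\vec{x}_{i}^{(\ell)} \in \check{X}_{\ell}} \bigl(\bar{e}_{\ell-1}-\check{e}_{\ell-1}\bigr)(\vec{x}_{i}^{(\ell)})\,\check{\chi}^{(\ell)}_{i},
\end{equation*}
or the symmetric version with $\chi$ in the second sum. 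We choose the splitting so that the second sum carries the true cardinals $\chi^{(\ell)}_{i}$; otherwise we would also need a Lebesgue bound for $\check{\chi}$, which follows from Lemma \ref{lem:localcardinal-inf} at the price of an extra lower-order term.

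\textbf{First term.} Bound it in $L_\infty(\Omega)$ by $\|\bar{e}_{\ell-1}\|_{\ell^\infty(\check{X}_{\ell})}\cdot \#\check{X}_{\ell}\cdot \max_i\|\chi^{(\ell)}_{i}-\check{\chi}^{(\ell)}_{i}\|_{L_\infty(\Omega)}$.
\begin{itemize}
\item Lemma \ref{lem:ellbound} gives $\|\bar{e}_{\ell-1}\|_{\ell^\infty(\check{X}_{\ell})}\le (1+c_{\ref{eq:lagrangedecaysum}})^{\ell-\ell_{0}-1}\|e_{\ell_{0}}\|_{\ell^\infty(X_{\le \ell})}$.
\item The cardinality satisfies $\#\check{X}_{\ell}\le N(\ell)\le C_{\#}\mu^{-d\ell}\lesssim C_{\#}h_{\ell}^{-d}$ by \eqref{eq:Nmu2} and Assumption \ref{ass:pointset}.
\item Lemma \ref{lem:localcardinal-inf} gives $\max_i\|\chi^{(\ell)}_{i}-\check{\chi}^{(\ell)}_{i}\|_{L_\infty(\Omega)}\le c_{\ref{eq:localcardinal-inf}}h_{\ell}^{\theta\varrho-d}$.
\end{itemize}
Multiplying these yields the factor $h_{\ell}^{\theta\varrho-2d}$ in \eqref{eq:approxcheckvsbar}. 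The condition $\varrho<k\magicC$ (Remark \ref{rk:check}) guarantees that $\check{\chi}^{(\ell)}_{i}$ is well defined from points of $\check{X}_{\ell}$ alone. This does not affect the estimate.

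\textbf{Second term.} Bound it pointwise by $\|\bar{e}_{\ell-1}-\check{e}_{\ell-1}\|_{L_\infty(\Omega)}\sum_i|\chi^{(\ell)}_{i}(\vec{y})|$, and use \eqref{eq:lagrangedecaysum} to obtain $c_{\ref{eq:lagrangedecaysum}}\|\check{e}_{\ell-1}-\bar{e}_{\ell-1}\|_{L_\infty(\Omega)}$. Together with the first term, this proves \eqref{eq:approxcheckvsbar}.

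\textbf{Second claim.} Write $\bar{e}_{\ell}-\check{e}_{\ell} = (\bar{e}_{\ell-1}-\check{e}_{\ell-1}) - (\bar{s}_{\ell}-\check{s}_{\ell})$, starting from $\bar{e}_{\ell_0}=\check{e}_{\ell_0}$. Setting $D_\ell := \|\bar{e}_{\ell}-\check{e}_{\ell}\|_{L_\infty(\Omega)}$, the first claim gives the recursion
\begin{equation*}
D_\ell \le (1+c_{\ref{eq:lagrangedecaysum}})D_{\ell-1} + c_{\ref{eq:localcardinal-inf}}C_{\#}(1+c_{\ref{eq:lagrangedecaysum}})^{\ell-\ell_0-1}\|e_{\ell_0}\|_{\ell^\infty(X_{\le\ell})}h_{\ell}^{\theta\varrho-2d}.
\end{equation*}
Unrolling it gives $D_\ell\le c_{\ref{eq:localcardinal-inf}}C_{\#}(1+c_{\ref{eq:lagrangedecaysum}})^{\ell-\ell_0-1}\|e_{\ell_0}\|\sum_{j=\ell_0+1}^{\ell}h_j^{\theta\varrho-2d}$. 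Here we use that $X_{\le j}\subset X_{\le \ell}$ and that the powers of $(1+c_{\ref{eq:lagrangedecaysum}})$ combine to $(1+c_{\ref{eq:lagrangedecaysum}})^{\ell-\ell_0-1}$ after pulling out the maximal one. Since $\theta\varrho>2d$, the sum is geometric in $\mu^{\theta\varrho-2d}$ by Assumption \ref{ass:pointset}. As in Lemma \ref{lem:epsbound}, it is bounded by $2c_h h_{\ell_0+1}^{\theta\varrho-2d}$ once $\mu^{\theta\varrho-2d}\le 1/2$.

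\textbf{Main obstacle.} The delicate step is the bookkeeping in this recursion, namely making the powers of $(1+c_{\ref{eq:lagrangedecaysum}})$ collapse to a single factor rather than squaring. If a direct unrolling produces $(1+c_{\ref{eq:lagrangedecaysum}})^{2(\ell-\ell_0-1)}$, I would absorb the surplus into the geometric sum. To do this I would require $\varrho$ large enough that $(1+c_{\ref{eq:lagrangedecaysum}})\mu^{\theta\varrho-2d}\le 1/2$, mirroring condition \eqref{eq:magicCchoice}.
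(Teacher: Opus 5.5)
\textbf{The gap: the splitting you use is not an identity.} Adding and subtracting $\sum_{\check X_\ell}\bar e_{\ell-1}(\vec x_i^{(\ell)})\check\chi_i^{(\ell)}$ gives the split you display, with $\check\chi_i^{(\ell)}$ in the second sum. Adding and subtracting $\sum_{\check X_\ell}\check e_{\ell-1}(\vec x_i^{(\ell)})\chi_i^{(\ell)}$ instead gives
\begin{equation*}
\bar s_\ell-\check s_\ell=\sum_{\vec x_i^{(\ell)}\in\check X_\ell}\check e_{\ell-1}(\vec x_i^{(\ell)})\bigl(\chi_i^{(\ell)}-\check\chi_i^{(\ell)}\bigr)+\sum_{\vec x_i^{(\ell)}\in\check X_\ell}\bigl(\bar e_{\ell-1}-\check e_{\ell-1}\bigr)(\vec x_i^{(\ell)})\,\chi_i^{(\ell)}.
\end{equation*}
Here the $(\chi-\check\chi)$-term carries $\check e_{\ell-1}$, not $\bar e_{\ell-1}$. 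You bound the first term with coefficients $\bar e_{\ell-1}$, to invoke Lemma \ref{lem:ellbound}, and the second term with the true cardinals, to invoke \eqref{eq:lagrangedecaysum}. That mixture differs from $\bar s_\ell-\check s_\ell$ by the cross term $\sum_{\check X_\ell}(\bar e_{\ell-1}-\check e_{\ell-1})(\vec x_i^{(\ell)})\bigl(\chi_i^{(\ell)}-\check\chi_i^{(\ell)}\bigr)$.

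Choosing true cardinals in the second sum therefore only relocates the difficulty. Lemma \ref{lem:ellbound} says nothing about $\check e_{\ell-1}$, and an analogue of it for $\check e_{\ell-1}$ would itself need a Lebesgue bound for $\check\chi$. Writing $\|\check e_{\ell-1}\|_{\ell^\infty(\check X_\ell)}\le\|\bar e_{\ell-1}\|_{\ell^\infty(\check X_\ell)}+D_{\ell-1}$ reproduces exactly the lower-order term $c_{\ref{eq:localcardinal-inf}}C_\# h_\ell^{\theta\varrho-2d}D_{\ell-1}$ you hoped to avoid. So your argument proves \eqref{eq:approxcheckvsbar} only with $c_{\ref{eq:lagrangedecaysum}}$ replaced by $c_{\ref{eq:lagrangedecaysum}}+c_{\ref{eq:localcardinal-inf}}C_\#h_\ell^{\theta\varrho-2d}$. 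The recursion factor then changes accordingly, and \eqref{eq:errorcheckvsbar} comes out with an extra factor bounded by $\exp\bigl(c_{\ref{eq:localcardinal-inf}}C_\#\sum_j h_j^{\theta\varrho-2d}\bigr)$: a bounded constant, but not the stated one.

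\textbf{What is needed for the stated constants.} You must keep the split you displayed, as the paper does. You then need a Lebesgue bound for the local cardinals with the same constant, $\sup_{\vec y}\sum_i|\check\chi_i^{(\ell)}(\vec y)|\le c_{\ref{eq:lagrangedecaysum}}$, not one derived from Lemma \ref{lem:localcardinal-inf}. The paper gets this by applying the machinery of Corollary \ref{cor:lagrangedecaysum} to the local systems. One way to justify that step: $B_\ell(i)$ is a principal submatrix of $A_{\ell,\ell}$. It therefore inherits the eigenvalue bounds (by interlacing) and the support pattern on which \eqref{eq:lagrangedecayA} rests. Hence each $\check\chi_i^{(\ell)}$ decays exponentially like $\chi_i^{(\ell)}$, and the annuli count goes through unchanged. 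With this ingredient, your bound on the first term, the recursion, its unrolling and the geometric sum all coincide with the paper's argument.

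\textbf{Your stated main obstacle does not arise.} In the unrolling, the forcing term at level $j$ carries $(1+c_{\ref{eq:lagrangedecaysum}})^{j-\ell_0-1}$ and is multiplied by $(1+c_{\ref{eq:lagrangedecaysum}})^{\ell-j}$. The powers therefore combine exactly to $(1+c_{\ref{eq:lagrangedecaysum}})^{\ell-\ell_0-1}$, and no condition such as $(1+c_{\ref{eq:lagrangedecaysum}})\mu^{\theta\varrho-2d}\le 1/2$ is needed.
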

\begin{proof}
    We have 
    \begin{align*}
       & \|\bar{s}_{\ell} - \check{s}_{\ell}\|_{L_{\infty}(\Omega)}  = \left\|\sum_{\vec{x}_{i}^{(\ell)} \in X_{\ell} \setminus \bar{X}_{\ell}} \bar{e}_{\ell-1}(\vec{x}_{i}^{(\ell)})\chi_{i} - \sum_{\vec{x}_{i}^{(\ell)} \in X_{\ell} \setminus \bar{X}_{\ell}} \check{e}_{\ell-1}(\vec{x}_{i}^{(\ell)})\check{\chi}_{i}\right\|_{L_{\infty}(\Omega)} \\
       & \le \left\|\sum_{\vec{x}_{i}^{(\ell)} \in X_{\ell} \setminus \bar{X}_{\ell}} \bar{e}_{\ell-1}(\vec{x}_{i}^{(\ell)})(\chi_{i} - \check{\chi}_{i}) \right\|_{L_{\infty}(\Omega)} + \left\| \sum_{\vec{x}_{i}^{(\ell)} \in X_{\ell} \setminus \bar{X}_{\ell}} \left(\check{e}_{\ell-1}(\vec{x}_{i}^{(\ell)})-\bar{e}_{\ell-1}(\vec{x}_{i}^{(\ell)})\right)\check{\chi}_{i}\right\|_{L_{\infty}(\Omega)} \\
        & = (I) + (II).
    \end{align*}
    We bound the norms separately. From lemma \ref{lem:localcardinal-inf} and \ref{lem:ellbound}, we have
    \begin{equation*}
        (I) = \left\|\sum_{\vec{x}_{i}^{(\ell)} \in X_{\ell} \setminus \bar{X}_{\ell}} \bar{e}_{\ell-1}(\vec{x}_{i}^{(\ell)})(\chi_{i} - \check{\chi}_{i}) \right\|_{L_{\infty}(\Omega)} \le  (1+c_{\ref{eq:lagrangedecaysum}})^{\ell-\ell_{0}-1}\left\| e_{\ell_{0}} \right\|_{\ell^{\infty}(X_{\le \ell})} c_{\ref{eq:localcardinal-inf}} h_{\ell}^{\theta \varrho -d} \check{N}(\ell).
    \end{equation*}
    (II) can be bound exploiting that $\varrho < k\magicC$, thus the local points enforced in the construction of $\check{\chi}_{i}$ are within the set $X_{\ell} \setminus \bar{X}_{\ell}$. Thus,
    \begin{align*}
        \left\| \sum_{\vec{x}_{i}^{(\ell)} \in X_{\ell} \setminus \bar{X}_{\ell}} \check{\chi}_{i}\right\|_{L_{\infty}(\Omega)} & \le \left\| \sum_{\vec{x}_{i}^{(\ell)}, \vec{x}_{j}^{(\ell)} \in X_{\ell} \setminus \bar{X}_{\ell}} B_{ij}\Phi_{\ell}(\vec{x}_{j}^{(\ell)}, \cdot)\right\|_{L_{\infty}(\Omega)} \\
        & \le \left\| \phi_{\ell}\right\|_{L_{\infty}(\Omega)} \sum_{\vec{x}_{i}^{(\ell)}, \vec{x}_{j}^{(\ell)} \in X_{\ell} \setminus \bar{X}_{\ell}} \left|B_{ij}\right|,
    \end{align*}
    from here we can apply the machinery of corollary \ref{cor:lagrangedecaysum} to get
    \begin{equation*}
        (II) = \left\| \sum_{\vec{x}_{i}^{(\ell)} \in X_{\ell} \setminus \bar{X}_{\ell}} \left(\check{e}_{\ell-1}(\vec{x}_{i}^{(\ell)})-\bar{e}_{\ell-1}(\vec{x}_{i}^{(\ell)})\right)\check{\chi}_{i}\right\|_{L_{\infty}(\Omega)} \le c_{\ref{eq:lagrangedecaysum}} \|\check{e}_{\ell-1}-\bar{e}_{\ell-1}\|_{\ell_{\infty}(X_{\ell} \setminus \bar{X}_{\ell})} .
    \end{equation*}
    Gathering the results gives
    \begin{align*}
        \|\bar{s}_{\ell} - \check{s}_{\ell}\|_{L_{\infty}(\Omega)} &\le (1+c_{\ref{eq:lagrangedecaysum}})^{\ell-\ell_{0}-1}\left\| e_{\ell_{0}} \right\|_{\ell^{\infty}(X_{\le \ell})} c_{\ref{eq:localcardinal-inf}} h_{\ell}^{\theta \varrho -d} \check{N}(\ell) + c_{\ref{eq:lagrangedecaysum}}\|\check{e}_{\ell-1}-\bar{e}_{\ell-1}\|_{\ell_{\infty}(X_{\ell} \setminus \bar{X}_{\ell})} \\
        &\le (1+c_{\ref{eq:lagrangedecaysum}})^{\ell-\ell_{0}-1}\left\| e_{\ell_{0}} \right\|_{\ell^{\infty}(X_{\le \ell})} c_{\ref{eq:localcardinal-inf}} C_{\#} h_{\ell}^{\theta \varrho -2d} + c_{\ref{eq:lagrangedecaysum}}\|\check{e}_{\ell-1}-\bar{e}_{\ell-1}\|_{L_{\infty}(\Omega)}.
    \end{align*}
    From here we have
    \begin{align*}
        \|\bar{e}_{\ell} - \check{e}_{\ell}\|_{L_{\infty}(\Omega)} &\le \|\bar{e}_{\ell-1} - \check{e}_{\ell-1} \|_{L_{\infty}(\Omega)} + \|\bar{s}_{\ell} -\check{s}_{\ell}\|_{L_{\infty}(\Omega)}\\
        & \le (1+c_{\ref{eq:lagrangedecaysum}})^{\ell-\ell_{0}-1}\left\| e_{\ell_{0}} \right\|_{\ell^{\infty}(X_{\le \ell})} c_{\ref{eq:localcardinal-inf}} C_{\#} h_{\ell}^{\theta \varrho -2d} +\\
        &+ c_{\ref{eq:lagrangedecaysum}}\|\check{e}_{\ell-1}-\bar{e}_{\ell-1}\|_{\ell_{\infty}(X_{\ell} \setminus \bar{X}_{\ell})} +\|\bar{e}_{\ell-1} - \check{e}_{\ell-1}\|_{L_{\infty}(\Omega)} \\
        & \le (1+c_{\ref{eq:lagrangedecaysum}})^{\ell-\ell_{0}-1}\left\| e_{\ell_{0}} \right\|_{\ell^{\infty}(X_{\le \ell})} c_{\ref{eq:localcardinal-inf}} C_{\#} h_{\ell}^{\theta \varrho -2d} \\
        &+ (c_{\ref{eq:lagrangedecaysum}}+1)\|\bar{e}_{\ell-1} - \check{e}_{\ell-1}\|_{L_{\infty}(\Omega)}.
    \end{align*}
    Iterating the previous equation leads to
    \begin{align*}
        \|\bar{e}_{\ell} - \check{e}_{\ell}\|_{L_{\infty}(\Omega)} &\le \sum_{j=0}^{\ell-\ell_{0}-1} (1+c_{\ref{eq:lagrangedecaysum}})^{\ell-\ell_{0}-1-j} \left\| e_{\ell_{0}} \right\|_{\ell^{\infty}(X_{\le \ell-j})} c_{\ref{eq:localcardinal-inf}} C_{\#} h_{\ell-j}^{\theta \varrho -2d}(c_{\ref{eq:lagrangedecaysum}}+1)^{j} \\
        & \le c_{\ref{eq:localcardinal-inf}} C_{\#}(1+c_{\ref{eq:lagrangedecaysum}})^{\ell-\ell_{0}-1} \left\| e_{\ell_{0}} \right\|_{\ell^{\infty}(X_{\le \ell})} \sum_{j=\ell_{0}+1}^{\ell} h_{j}^{\theta \varrho -2d}.
    \end{align*}
    Finally, the convergence of the geometric series, given $h_{j} \le h_{\ell_{0}+1}\mu^{j-\ell_{0}+1}$ along with the assumption $\theta \varrho > 2d$, let us conclude
    \begin{align*}
        \|\bar{e}_{\ell} - \check{e}_{\ell}\|_{L_{\infty}(\Omega)} &\le c_{\ref{eq:localcardinal-inf}} C_{\#}(1+c_{\ref{eq:lagrangedecaysum}})^{\ell-\ell_{0}-1} \left\| e_{\ell_{0}} \right\|_{\ell^{\infty}(X_{\le \ell})} h_{\ell_{0}+1}^{\theta \varrho -2d}(1-\mu^{\theta \varrho -2d})^{-1} \\
        & \le c_{\ref{eq:localcardinal-inf}} C_{\#}(1+c_{\ref{eq:lagrangedecaysum}})^{\ell-\ell_{0}-1} \left\| e_{\ell_{0}} \right\|_{\ell^{\infty}(X_{\le \ell})} 2c_{h} h_{\ell_{0}+1}^{\theta \varrho -2d}.
    \end{align*}
    Thus, the stated result holds.
\end{proof}

Thus, the local cardinal version of the algorithm produces a solution which is close to the approximation computed with the traditional cardinals. The quality, as we can see, similar to other results in the article depends on the adaptive turning point $\ell_{0}$ and the error of the classical scheme in such instance. Clearly, for larger values of $\ell_{0}$ or small errors $e_{\ell_{0}}$ the payoff is negligible. Additionally, we recall that with such variation, the discarded point within the adaptive selection is just required as evaluation point to update the variable $\varepsilon_{\ell}$, but are not required for any interpolant update, leading to a much more efficient algorithm.

\section{Algorithms and numerical results}
\label{sec:numerics}
In this section, we present the algorithms used in the numerical section. 
Before diving into the algorithms, we first present some general considerations on their design.
The nested sequence of sets of points $\{X_{\ell}\}_{1\le\ell \le L}$ is stored in order to improve their access in the algorithm. Indeed, we require, on each level, to have access to both the associated set of points and the residual computed on the finest set to perform the point selection. To end this, we would like, since the sets are nested, that every set $X_{\ell}$ for $1 \le \ell \le L$ is accessible from $X_{L}$. Precisely, we assume that we have access to a set $X$ and a list $\{N(\ell)\}_{1\le \ell \le L}$, such that the first $N(\ell)$ entries of $X$ coincide with the set $X_{\ell}$, using the notation $X_{\ell} := X[N(\ell)]$. 

\begin{algorithm}[ht]
    \caption{point selection algorithm}\label{alg:selection}
    \begin{algorithmic}[1]
    \Statex{Input: Point set $X$, size of the subset $N(\ell)$ of size $N(L)$, error values associated with the points $\vec{e} = \{e(\vec{x})\}_{\vec{x} \in X}$, boolean mask of removed point $\zeta^{\prime}$, threshold value $\varepsilon$ and radius $r$.}
    \Statex{Output: Updated boolean mask of removed point $\zeta^{\prime}$, selected point $\zeta$ and cardinality of the selected set $\check{N}$.}
    \State $\zeta_{hv} \leftarrow |\vec{e}| > \varepsilon$ 
    \If{$\zeta_{hv} \neq \boldsymbol{0}$}
        \For{$\vec{x}_{i} \in X_{\zeta_{hv}^{\complement}[N(\ell)] \, \texttt{xor}  \,  \zeta^{\prime}[N(\ell)]}$}
            \State $\zeta^{\prime}_{i} \leftarrow \min_{\vec{x} \in X_{\zeta_{hv}}}(\|\vec{x}-\vec{x}_{i}\|_{2}) > r$ 
        \EndFor
        \State $\zeta \leftarrow (\zeta^{\prime})^{\complement}$ 
        \For{$\vec{x} \in X_{\zeta^{\prime}}$}
            \State $\zeta_{\zeta} \leftarrow \zeta_{\zeta} \, \texttt{and} \, (\|X_{\zeta}-\vec{x}\|_{2} > r)$ 
        \EndFor
        \State $\check{N} \leftarrow \texttt{sum}(\zeta)$
    \Else
        \State $\check{N} \leftarrow 0$
        \State $\zeta^{\prime} \leftarrow \boldsymbol{1}_{N(L)}$
        \State $\zeta \leftarrow \boldsymbol{0}_{N(\ell)}$
    \EndIf
    \end{algorithmic}
\end{algorithm}

We first present the selection algorithm. It deals with masks, i.e., boolean vectors; instead of storing the subset of desired points, one can recover the subset of interest by application of the mask.
Since the aim of the algorithm is to perform the selection \ref{eq:adaptpoint}, we need just the finest set $X$, from which we can recover both the set of points that we employ for the criteria validation, and the set on which the selection is performed. Indeed, $N(\ell)$ allows us to have access to the latter. Associated with the full set of points $X$, we have the set of error evaluation $\vec{e}$, needed for thresholding. Lastly, we have the thresholding parameter $\varepsilon$ and the influence radius $r$. 
In the following, we use the notation $X_{\zeta}$ for the subset of $X$ associated with the mask of boolean indices $\zeta$.
Algorithm \ref{alg:selection} is structured as follows: first, we compute the mask of the point whose associated error is higher than the threshold $\zeta_{hv}$. Here, if the mask is not the null mask, we proceed, otherwise the error is lower than the threshold in the finest set, therefore, there is nothing to refine; the algorithm terminates returning the null mask for the selection the identity mask for the removed points and 0 as cardinality for the selected points. On the other hand, if at least one point has error greater than $\varepsilon$, we compute the mask of the removed points, i.e., the points whose are at least $r$-far from the points with large error $Y_{\zeta_{hv}}$. Theorem \ref{thm:consistency} allows us to consider points that were not already removed in previous levels, and Lemma \ref{lem:checkset} suggests which points are meaningful to inspect, i.e., the points whose error is smaller than the threshold. This process is done iteratively on all points of $X$.
Having the mask of the removed points $\zeta^{\prime}$, we proceed with the computation of the mask of the selected points. We take the complement of the mask of the removed points $(\zeta^{\prime})^{\complement}$ as starting selection. Lastly, we iterate on the removed points with the goal of removing all points close to them, updating the mask of the selected points $\zeta$. Note that we use the notation $\zeta_{\zeta}$ to work with the subset of indices that are feasible candidates, which stays so, thanks to the bitwise and operator, just if they are $r$-far from the current removed point $\vec{x}$ in the loop.
Finally, we compute the number of selected points exploiting the boolean nature of the mask.
The algorithm returns $\zeta^{\prime} \in \{0,1\}^{N(L)}$, $\zeta \in \{0,1\}^{N(\ell)}$ and $\check{N} \in \N$.

\begin{algorithm}[ht]
    \caption{Adaptive multiscale approximation algorithm}\label{alg:adapt}
    \begin{algorithmic}[1]
    \Statex{Input: Number of levels $L \in \N$, multi-level $d$-array of points $X$ with size vector $\{N(\ell)\}_{1\le \ell \le L}$, values associated with the points $\boldsymbol{f} =\{f(\vec{x})\}_{\vec{x} \in X} \in \R^{N(L)}$. Constants $k$, $\nu$, $\mu$. Set of evaluation points $Y$. starting mesh norm $h_{1}$, radii constant $\magicC$, starting threshold $\varepsilon_{0}$ and cg tolerance $\epsilon_{tol}$}
    \Statex{Output: Level-wise approximation $f_{\ell}$ computed on $Y$, for $1 \le \ell \le L$.}
    \State $f_{0} \leftarrow \boldsymbol{0}_{Y}$
    \State $\zeta^{\prime} \leftarrow \boldsymbol{0}_{N(L)}$
    \For{$1 \le \ell \le L$}
        \If{$\ell > L^{\star}$}     
            \State $\zeta^{\prime}, \zeta, \check{N} \leftarrow \texttt{PointSelection}(X,N(\ell),\boldsymbol{f},\zeta^{\prime},\varepsilon_{\ell-1},k\magicC\mu^{\ell-1} h_{1}|\log(\mu^{\ell-1} h_{1})|)$
        \Else   
            \State $\zeta, \check{N} \leftarrow \boldsymbol{1}_{N(L)}, N(L) $
        \EndIf
        \State $\delta_{\ell} \leftarrow \nu \mu^{\ell-1} h_{1}$
        \State $K := \Phi_{\ell}(\vec{x}_{i}^{(\ell)}, \vec{x}_{j}^{(\ell)})  \quad 1 \le i,j \le N(\ell)$
        \State $K^{-1} \leftarrow \texttt{cg}(K, \text{id}_{N(\ell)}, \epsilon_{tol})$
        \State $P := \Phi_{\ell}(\vec{x}_{i}^{(\ell)}, \vec{x}_{j}^{(\ell)})  \quad 1 \le i \le N(\ell) < j \le N$
        \State $\chi = K^{-1} P$
        \State $s_{\ell} [N(\ell)] \leftarrow \boldsymbol{f}[N(\ell)]$
        \State $(s_{\ell})[N(\ell)]^{\complement} \leftarrow \chi_{\zeta}^{T} \boldsymbol{f}[N(\ell)]_{\zeta}$
        \If{$\ell \ge L^{\star}$}
            \State $\zeta_{\varepsilon} \leftarrow \boldsymbol{0}_{N(L)}$
            \For{$\vec{x} \in (X_{\ell})_{\zeta^{\prime}}$}
                \State $(\zeta_{\varepsilon})_{\zeta_{\varepsilon}^{\complement}} \leftarrow \|X_{\zeta_{\varepsilon}^{\complement}} - \vec{x}\|_{2} < \magicC\mu^{\ell-1} h_{1}|\log(\mu^{\ell-1} h_{1})|)$
            \EndFor
            \State $\varepsilon_{\ell} \leftarrow \varepsilon_{\ell-1} + \max({s_{\ell}}_{\zeta_{L}})$
        \EndIf
        \State $f_{\ell} \leftarrow  f_{\ell-1} +  \chi_{\zeta}^{T} \boldsymbol{f}[N(\ell)]_{\zeta}$
        \State $\boldsymbol{f} \leftarrow  \boldsymbol{f} - s_{\ell}$
    \EndFor
    \end{algorithmic}
\end{algorithm}

The adapt multiscale algorithm \ref{alg:adapt} is the main focus of the manuscript. We assume to start with a sequence of nested sets of points such that Assumption \ref{ass:pointset} holds with respect to constants $\mu$, $\nu$, $h_{1}$ and $k$ stored as previously mentioned. Then the target function $f$ values associated with the sets of points. We also consider the function $\Phi$ such that Assumption \ref{ass:kernel} holds. Lastly, we introduce parameters $\magicC$ on the selection influence, $\varepsilon_{0}$ the initial selection threshold \eqref{eq:adaptpoint} and the tolerance of the conjugate gradient to solve the linear systems $\varepsilon_{tol}$. Additionally, we have the set where the approximation is computed.
The algorithm works as outlined in \ref{alg:adaptms}; iteratively, for levels up to $L^{\star}$ we do not compute the selection and assign the trivial mask as selection mask. On the other hand, from level $L^{\star}$ we perform the selection on the current set. Then, we compute the interpolation matrix and approximate the coefficients of the cardinals using the conjugate gradient with tolerance $\varepsilon_{0}$, due to have the matrix which is symmetric and positive definite. Next, the threshold parameter is updated; first the mask of points taken into account by definition \ref{def:eps} is computed, then the update is performed. Lastly, the evaluation of the approximation on the evaluation set is computed.
Is worth to point out that we leverage the nested hypothesis of the sets of points to spare computations. Indeed, all updates applied to the finest level are by construction applied to all previous levels. Additionally, the consistency from theorem \ref{thm:consistency} is guaranteed by the application of selection from level $L^{\star}$.

As last algorithm, we present the adaptive multiscale method with local cardinal function. Indeed, introducing $\varrho < k \magicC$, we achieve adaptivity where discarded points are not required even in the cardinal construction.

\begin{algorithm}[ht]
    \caption{Adaptive multiscale approximation algorithm, local variation}\label{alg:loc}
    \begin{algorithmic}[1]
    \Statex{Input: Number of levels $L \in \N$, multi-level $d$-array of points $X$ with size vector $\{N(\ell)\}_{1\le \ell \le L}$, values associated with the points $\boldsymbol{f} =\{f(\vec{x})\}_{\vec{x} \in X} \in \R^{N(L)}$. Constants $k$, $\nu$, $\mu$. Set of evaluation points $Y$. starting mesh norm $h_{1}$, radii constant $\magicC$, locality radii $\varrho$, starting threshold $\varepsilon_{0}$ and cg tolerance $\epsilon_{tol}$}
    \Statex{Output: Level-wise approximation $f_{\ell}$ computed on $Y$, for $1 \le \ell \le L$.}
    \State $f_{0} \leftarrow \boldsymbol{0}_{Y}$
    \State $\zeta^{\prime} \leftarrow \boldsymbol{0}_{N(L)}$
    \For{$1 \le \ell \le L$}
        \State $\delta_{\ell} \leftarrow \nu \mu^{\ell-1} h_{1}$
        \If{$\ell > L^{\star}$}     
            \State $\zeta^{\prime}, \zeta, \check{N} \leftarrow \texttt{PointSelection}(X,N(\ell),\boldsymbol{f},\zeta^{\prime},\varepsilon_{\ell-1},k\magicC\mu^{\ell-1} h_{1}|\log(\mu^{\ell-1} h_{1})|)$
            \State $K^{-1} \leftarrow \boldsymbol{0}_{N(\ell) \times N(\ell)}$
            \For{$\vec{x}_{i}^{(\ell)} \in (X_{\ell})_{\zeta}$}
                \State $\zeta_{i} \leftarrow \|\vec{x}_{i}^{(\ell)}-X_{\ell}\|_{2} < \varrho \mu^{\ell-1} h_{1}|\log(\mu^{\ell-1} h_{1})|$
                \State $K_{i} := \Phi_{\ell}(\vec{x}_{i}^{(\ell)}, \vec{x}_{j}^{(\ell)})  \quad \vec{x}_{i}^{(\ell)}, \vec{x}_{j}^{(\ell)} \in (X_{\ell})_{\zeta_{i}} $
                \State $(K^{-1})_{i, \zeta_{i}} \leftarrow \texttt{cg}(K_{i}, \text{id}_{N(\ell)_{\zeta_{i}}}, \epsilon_{tol})$
            \EndFor
        \Else   
            \State $\zeta, \check{N} \leftarrow \boldsymbol{1}_{N(L)}, N(L)$
            \State $K := \Phi_{\ell}(\vec{x}_{i}^{(\ell)}, \vec{x}_{j}^{(\ell)})  \quad 1 \le i,j \le N(\ell)$
            \State $K^{-1} \leftarrow \texttt{cg}(K, \text{id}_{N(\ell)}, \epsilon_{tol})$
        \EndIf
        \State $P := \Phi_{\ell}(\vec{x}_{i}^{(\ell)}, \vec{x}_{j}^{(\ell)})  \quad 1 \le i \le N(\ell) < j \le N$
        \State $\chi = K^{-1} P$
        \State $s_{\ell} [N(\ell)] \leftarrow \boldsymbol{f}[N(\ell)]$
        \State $(s_{\ell})[N(\ell)]^{\complement} \leftarrow \chi_{\zeta}^{T} \boldsymbol{f}[N(\ell)]_{\zeta}$
        \If{$\ell \ge L^{\star}$}
            \State $\zeta_{\varepsilon} \leftarrow \boldsymbol{0}_{N(L)}$
            \For{$\vec{x} \in (X_{\ell})_{\zeta^{\prime}}$}
                \State $(\zeta_{\varepsilon})_{\zeta_{\varepsilon}^{\complement}} \leftarrow \|X_{\zeta_{\varepsilon}^{\complement}} - \vec{x}\|_{2} < \magicC\mu^{\ell-1} h_{1}|\log(\mu^{\ell-1} h_{1})|)$
            \EndFor
            \State $\varepsilon_{\ell} \leftarrow \varepsilon_{\ell-1} + \max({s_{\ell}}_{\zeta_{L}})$
        \EndIf
        \State $f_{\ell} \leftarrow  f_{\ell-1} +  \chi_{\zeta}^{T} \boldsymbol{f}[N(\ell)]_{\zeta}$
        \State $\boldsymbol{f} \leftarrow  \boldsymbol{f} - s_{\ell}$
    \EndFor
    \end{algorithmic}
\end{algorithm}

The main difference between algorithms \ref{alg:adapt} and \ref{alg:loc} lies in the computation of the coefficients of the cardinals. Indeed, while in the former we compute the full $N(\ell) \times N(\ell)$ system is solved, in the latter we deal with $\check{N}$ $N(\ell)_{\zeta_{i}} \times N(\ell)_{\zeta_{i}}$. Precisely, in algorithm \ref{alg:loc}, for every point $\vec{x}_{i}^{(\ell)}$ we compute the coefficient of $\check{\chi_{i}}$ in the local subsystem of its neighbourhood, then the coefficient are stored with respect to the point indices in order to evaluate all cardinals simultaneously with a matrix-vector multiplication.

\section{Numerical results}

In this section we look at the numerical evidence of our theoretical results. 
All nested sequences of sets of points considered are generated starting from equispaced points in the interval $[0, 10]$ with fill distance $h_{1} := \mu/\sqrt{d}$; further levels are obtained taking equispaced points between adjacent points such that $h_{\ell} = h_{1}\mu^{\ell-1}$. Eventually, for experiment with $d>1$, a $d$-dimensional grid is built. Details regarding sets of points related to different choices of $\mu$ are detailed on table \ref{tab:datasets}, while table \ref{tab:Lstar} shows the associated $L^{\star}$ values. Lastly, table \ref{tab:params} shows the parameters of algorithm \ref{alg:adapt} fixed over all numerical experiments presented in this section \cite{lotfadapt}.

\begin{table}[ht]
    \centering
    \begin{tabular}{c|c|c|c|c}
        $\nu$ & $\varepsilon_{0}$ & $k$ & $\magicC$ & $\varepsilon_{tol}$ \\
         \hline         
         4 & 1e-8 & 2 & 2 & 1e-8 
    \end{tabular}
    \caption{Values of the parameters of algorithm \ref{alg:adapt} for the experiments.}
    \label{tab:params}
\end{table}

\begin{table}[ht]
    \centering
    \begin{tabular}{c|c|c|c|c}
         & $X_{1}$ & $X_{2}$ & $\dots$ & $X_{L}$ \\
         \hline
        $\mu = 0.40$ & 26 & 51 & $\dots$ &  12445 (L=8) \\
        $\mu = 0.30$ & 29 & 57 & $\dots$ &  14633 (L=7) \\
        $\mu = 0.30$ & 34 & 100 & $\dots$ & 11698 (L=6) \\
        $\mu = 0.20$ & 40 & 126 & $\dots$ &  8205 (L=5) \\
        $\mu = 0.20$ & 50 & 235 & $\dots$ &  6015 (L=4) \\
        $\mu = 0.15$ & 67 & 401 & $\dots$ & 17867 (L=4) \\
        $\mu = 0.10$ & 101 & 945 & $\dots$ & 9334 (L=3) \\
    \end{tabular}
    \caption{Number of points for sets $X_{\ell}$ with respect to $\mu$ generated in $[0, 10]$.}
    \label{tab:datasets}
\end{table}

\begin{table}[ht]
    \centering
    \begin{tabular}{c|c|c|c|c|c|c|c}
        $\mu$ & $0.10$ & $0.15$ & $0.20$ & $0.25$ & $0.30$ & $0.35$ & $0.40$ \\
         \hline         
        $L^{\star}$ & 1 & 1 & 1 & 1 & 2 & 3 & 4
    \end{tabular}
    \caption{Values of $L^{\star}$ for the different datasets introduced in table \ref{tab:datasets}.}
    \label{tab:Lstar}
\end{table}

With this setup, we show the potential of this strategy and how can be exploited in a more general framework.
We approximate the function 

\begin{equation*}
    f_{c, r}(x) := 
    \begin{cases}
        e^{r^{-2}-\frac{1}{r^{2}-(x-c)^{2}}} & \forall x \in [c-r, c+r] \\
        0 & \text{otherwise}
    \end{cases}, 
\end{equation*}

which is $\mathcal{C^{\infty}}$ in $\R$. 

We start by taking into account $f_{5, 0.03}$ in the interval $[0, 10]$. The first aspect of the adaptivity scheme that we investigate is the thresholding strategy. Indeed, lemma \ref{lem:epsbound} provides a bound over the threshold parameter $\varepsilon_{\ell}$ with respect to the scheme parameters. In particular, we expect $\varepsilon_{\ell}$ to be controlled for smaller values of $\mu$, i.e., $\varepsilon_{\ell} \xrightarrow{\, \mu \rightarrow 0 \,} \varepsilon_{0}$, since $h_{1}$ is proportional to $\mu$. Additionally, we expect a similar behaviour for $\magicC \rightarrow \infty$. We first investigate such behaviour, with a pure adaptive scheme, i.e., forcing $L^{\star} = 0$ and refining every set of points.
Tables \ref{tab:threshold} and \ref{tab:magicC}, highlight that as $\mu$ decrease and $\magicC$ increase the threshold decrease towards $\varepsilon_{0} = 1e-8$. It is worth to mention that $\varepsilon_{0} = 1e-8$, and for $\mu$ not small enough or $\magicC$ not large enough the accuracy may drop significantly. However, the same experiment performed without enforcing the value of $L^{\star}$ leads to threshold values bounded by $1e-7$ for all choices of $\mu$ and $\magicC$.

The fact that the adaptivity should not be introduced in the early stages of the scheme is common sense, however these results highlighted the possible impact on early adaptivity in the scheme. The following results are obtained through algorithms \ref{alg:adapt} and \ref{alg:loc}, without enforcing different values on $L^{\star}$.

\begin{table}[ht]
    \centering
    \caption{Threshold value $\varepsilon_{\ell}$ on levels $\ell=1,2,3$ for different choices of $\mu$.}
    \label{tab:threshold}
    \begin{tabular}{c|ccc}
        \hline
        $\mu$ & $\ell=1$ & $\ell=2$ & $\ell=3$ \\
        \hline
        0.40 & $6.6995e-3$ & $6.9860e-3$ & $1.1891e-2$ \\
        0.35 & $6.8701e-3$ & $7.1566e-3$ & $1.2479e-2$ \\
        0.30 & $1.0922e-4$ & $1.3830e-4$ & $1.7669e-4$ \\
        0.25 & $4.3453e-4$ & $5.8628e-4$ & $9.3406e-4$ \\
        0.20 & $5.1031e-4$ & $5.5866e-4$ & $9.5554e-4$ \\
        0.15 & $7.8425e-4$ & $1.0550e-3$ & $1.6883e-3$ \\
        0.10 & $2.1325e-5$ & $2.8806e-5$ & $4.5135e-5$ \\
        \hline
    \end{tabular}
\end{table}

\begin{table}[ht]
    \centering
    \setlength{\tabcolsep}{3pt}
    \caption{Threshold value $\varepsilon_{\ell}$ on levels $\ell=1,\ldots,7$ for different choices of $\magicC$ with $\mu=0.4$.}
    \label{tab:magicC}
    \begin{tabular}{c|ccccccc}
        \hline
        $\magicC$ & $\ell=1$ & $\ell=2$ & $\ell=3$ & $\ell=4$ & $\ell=5$ & $\ell=6$ & $\ell=7$\\
        \hline
        1 & $5.030e-6$ & $1.228e-5$ & $1.476e-5$ & $1.774e-5$ & $2.594e-5$ & $4.335e-5$ & $5.787e-5$ \\
        2 & $5.030e-6$ & $5.259e-6$ & $5.783e-6$ & $8.641e-6$ & $1.153e-5$ & $1.634e-5$ & $2.779e-5$ \\
        3 & $6.285e-7$ & $6.541e-7$ & $8.667e-7$ & $1.084e-6$ & $1.302e-6$ & $2.384e-6$ & $3.463e-6$ \\
        4 & $6.285e-7$ & $6.541e-7$ & $8.667e-7$ & $1.084e-6$ & $1.302e-6$ & $1.937e-6$ & $3.016e-6$ \\
        5 & $8.620e-8$ & $8.620e-8$ & $8.904e-8$ & $1.144e-7$ & $1.887e-7$ & $2.711e-7$ & $4.665e-7$ \\
        6 & $8.269e-8$ & $8.269e-8$ & $8.553e-8$ & $1.109e-7$ & $1.454e-7$ & $1.854e-7$ & $2.555e-7$ \\
        \hline
    \end{tabular}
\end{table}

Table \ref{tab:ratio} shows the ratio of $\check{X}_{\ell} \setminus X_{\ell}$ over different levels for different choices of $\mu$, where the values for $\mu = 0.4$ are also plotted. As we can see, pairing the results with table \ref{tab:Lstar}, except for $\mu = 0.25$, already the first adaptive levels reduce the point required and the ratio of $\check{X}_{\ell} \setminus X_{\ell}$ drops consistently.
Indeed, as suggested in theorem \ref{thm:compactsupp}, we have that the first adaptive iteration deals with a large support which leads to a higher percentage of selected points. The trend start to revert once the error outside the support of the original function, introduced by previous iterations, is smaller than the threshold parameter, and thus when this holds into a large neighborhood, such points are removed leading to a decay in the percentage of points used. 

\begin{table}[ht]
\centering
\caption{Ratio of $\check{X}_{\ell} \setminus X_{\ell}$ over different levels.}
\label{tab:ratio}
\begin{minipage}{0.60\textwidth}
    \centering
    \small
    \setlength{\tabcolsep}{2pt}
        \begin{tabular}{c|ccccccc}
        \diagbox{$\ell$}{$\mu$} & $0.40$ & $0.35$ & $0.30$ & $0.25$ & $0.20$ & $0.15$ & $0.10$ \\
        \hline
        1 & 1.0    & 1.0    & 1.0    & 1.0    & 1.0    & 1.0    & 1.0    \\
        2 & 1.0    & 1.0    & 1.0    & 1.0    & 0.7248 & 0.5311 & 0.3164 \\
        3 & 1.0    & 1.0    & 0.6466 & 0.6848 & 0.5838 & 0.4255 & 0.2323 \\
        4 & 1.0    & 0.8643 & 0.5299 & 0.4830 & 0.4173 & 0.2990 & --     \\
        5 & 0.2874 & 0.6664 & 0.3525 & 0.2569 & --     & --     & --     \\
        6 & 0.2137 & 0.3842 & 0.1278 & --     & --     & --     & --     \\
        7 & 0.1720 & 0.1975 & --     & --     & --     & --     & --     \\
        8 & 0.0771 & --     & --     & --     & --     & --     & --     \\
        \hline
    \end{tabular}
\end{minipage}
\hfill
\begin{minipage}{0.35\textwidth}
    \centering
    \begin{tikzpicture}
        \begin{axis}[
            title={$\mu = 0.3$},
            width=\linewidth,
            height=0.9\linewidth,
            xlabel=Level,
            ylabel={Ratio},
            ymin=0,
            ymax=1.05,
            xtick={1,...,6},
            ytick={0,1},
        ]
        \addplot[
            black,
            mark=o,
            line width=1pt
        ] coordinates {
            (1,1.0)
            (2,1.0)
            (3,0.6466)
            (4,0.5299)
            (5,0.3525)
            (6,0.1278)
        };
        \end{axis}
    \end{tikzpicture}
\end{minipage}
\end{table}

It is clear that the domain influences the performance of the adaptive scheme. In our example, the support of $f_{5, 0.03}$ might appear small compared to the interval $[0,10]$. However, one should take into account that after the first iterations, which is always not adaptive, the error that we approximate in subsequent iterations is not localized into a neighbourhood of the original support.
Though the consistency of theorem \ref{thm:consistency} is respected independently with respect to the domain, as can be seen in figure \ref{fig:comparison}, the rate of $\check{X}_{\ell} \setminus X_{\ell}$ depends on the domain. In the approximation of $f_{5, 0.03}$ with $\mu = 0.3$, is sufficient to restrict the domain to $[2,8]$ to move the first level with a ratio different from one from $\ell=3$ to $\ell = 6$. On the other hand, figure \ref{fig:comparison} also shows how the interpolation error behaves much better in the smaller domain, which clearly is due to the higher number of points involved in the approximation not on to the domain itself.

\begin{figure}[ht] 
    \centering
    \begin{subfigure}{0.48\linewidth}
        \centering
        \begin{tikzpicture}[scale=0.8]
            \begin{axis}[
                ymin=-5e-5,
                ymax=5e-5
            ]
            \addplot[
                no marks,
            ]
            table[
                col sep=comma,
                x index = 0,
                y index = 1
            ]{error4on10-0.3.csv};    
            \addplot[only marks,
                     mark=*, 
                     ]
            table[
                col sep=comma,
                x index=0,
                y expr=-4.5e-5,
            ]{selpoint4on10-0.3.csv};
            \end{axis}
        \end{tikzpicture}
        \caption{On domain $[0, 10]$;}
    \end{subfigure}
    \hfill
    \begin{subfigure}{0.48\linewidth}
        \centering
        \begin{tikzpicture}[scale=0.8]
            \begin{axis}[
                ymin=-8e-6,
                ymax=8e-6
            ]
            \addplot[
                no marks,
            ]
            table[
                col sep=comma,
                x index = 0,
                y index = 1
            ]{error4on5-0.3.csv};
            \addplot[only marks, mark=*]
            table[
                col sep=comma,
                x index=0,
                y expr=-6.5e-6
            ]{selpoint4on5-0.3.csv};
            \end{axis}
        \end{tikzpicture}
        \caption{On domain $[2, 8]$;}
    \end{subfigure}
    \caption{Interpolation error $\bar{e}_{6}$ and selected point $\check{X}_{6}$ for different configuration of points. We considered the function $f_{5, 0.03}$ on domain $[2,8]$ and $[0,10]$. The points were generated based on the interval and $\mu = 0.3$.}
    \label{fig:comparison}
\end{figure}

Lastly, we sought numerical evidence for theorem \ref{thm:localerror}. We computed the error on $500$ points in the interval $[0,10]$ with algorithms \ref{alg:adapt} and \ref{alg:loc} for different values of $\varrho$ for fixed $\mu$ and for different values of $\mu$ for fixed $\varrho$ and collected the difference in tables \ref{tab:rho} and \ref{tab:error}. Indeed, for fixed $\mu$ the error drops as $\varrho$ increases or $\mu$ decreases for small values of $\mu$. Clearly, errors smaller than $1e-8$ are less meaningful as precise values, given our parameters choices, however, are still a source of insight especially if compared to errors of closer level within the same experimental setup.

\begin{table}[ht]
    \centering
    \begin{tabular}{c|cccc}
        \hline
        Level & $\varrho=1$ & $\varrho=2$ & $\varrho=3$ & $\varrho=4$ \\
        \hline
        1 & $0.0$ & $0.0$ & $0.0$ & $0.0$ \\
        2 & $0.0$ & $0.0$ & $0.0$ & $0.0$ \\
        3 & $1.3662e-04$ & $8.3405e-07$ & $3.5147e-09$ & $6.7670e-10$ \\
        4 & $1.1459e-04$ & $7.3824e-07$ & $3.0791e-09$ & $4.0970e-10$ \\
        5 & $9.4496e-05$ & $5.8438e-07$ & $3.0888e-09$ & $2.4866e-10$ \\
        6 & $2.6828e-05$ & $1.4013e-07$ & $2.9993e-09$ & $2.4151e-10$ \\
        \hline
    \end{tabular}
    \caption{Norm of the error difference using local with respect to traditional cardinal functions. Results are computed using Algorithms \ref{alg:adapt} and \ref{alg:loc}, with $\mu = 0.3$ and different values of $\varrho$.}
    \label{tab:rho}
\end{table}

\begin{table}[ht]
    \centering
    \begin{tabular}{c|ccccccc}
        \diagbox{$\ell$}{$\mu$} & $0.40$ & $0.35$ & $0.30$ & $0.25$ & $0.20$ & $0.15$ & $0.10$ \\
        \hline
        1 & 0.0      & 0.0       & 0.0      & 0.0      & 0.0      & 0.0      & 0.0      \\
        2 & 0.0      & 0.0       & 0.0      & 5.652e-4 & 2.176e-4 & 1.039e-5 & 1.416e-7 \\
        3 & 0.0      & 0.0       & 8.340e-7 & 2.746e-4 & 5.914e-5 & 9.729e-6 & 5.758e-9 \\
        4 & 0.0      & 1.339e-10 & 7.382e-7 & 2.097e-4 & 1.243e-6 & 1.711e-6 &   --     \\
        5 & 1.346e-8 & 4.741e-9  & 5.843e-7 & 1.867e-5 &   --     &   --     &   --     \\
        6 & 1.439e-8 & 2.701e-9  & 1.401e-7 &   --     &   --     &   --     &   --     \\
        7 & 4.987e-9 & 1.614e-9  &   --     &   --     &   --     &   --     &   --     \\
        8 & 4.987e-9 &  --       &   --     &   --     &   --     &   --     &   --     \\
        \hline
    \end{tabular}
    \caption{Norm of the error difference using local with respect to traditional cardinal functions. Results are computed using Algorithms \ref{alg:adapt} and \ref{alg:loc}, with different values of $\mu$ and $\varrho = 2$.}
    \label{tab:error}
\end{table}

Finally, previous results were presented in low dimension in order to afford large domains and a high number of points to highlight the features of the scheme. Indeed, we computed the approximation of

\begin{equation*}
    f_{2} := \begin{cases}
        e^{\frac{100}{9}-\frac{1}{0.09-\|x-(1.5, 1.5)\|_{2}^{2}}} \quad &\forall \, x \in B((1.5, 1.5); 0.3) \\
        0 & \text{otherwise}
    \end{cases} ,
\end{equation*}

over a nested set with $\mu = 0.25$, $h_{\ell} = \mu^{\ell}$ and $L^{\star} = 1$ in the domain $[0, 3]^{2}$, counting $37249$ points, using algorithm \ref{alg:adapt}.
Figure \ref{fig:2d} shows that dimension is not a restriction; in fact, we observe an analogue behaviour to the 1-dimensional setting. However, it should be taken in consideration that naive approximation in high dimension might be expensive from a storage prospective. As in other circumstances, this obstacle can be overcome with appropriate techniques as sparse grids.

\begin{figure}[ht] 
    \centering
    \begin{subfigure}{0.48\linewidth}
        \centering
        \begin{tikzpicture}[scale=0.7]
            \begin{axis}[
                xmin = -0.1,
                xmax = 3.1,
                ymin = -0.1,
                ymax = 3.1,]
            \addplot[
                only marks,
            ]
            table[
                col sep=comma,
                x index = 0,
                y index = 1
            ]{points2d-2.csv};
            \end{axis}
        \end{tikzpicture}
        \caption{Selected points on level $2$.}
    \end{subfigure}
    \hfill
    \begin{subfigure}{0.48\linewidth}
        \centering
        \centering
    \begin{tikzpicture}[scale=0.7]
    	\begin{axis}[
            axis lines=none,
            ticks=none,
            mesh/rows=30,
            view={45}{30},
    		colorbar,
            colormap/blackwhite,
            colorbar style={
            ytick={0.0,-0.05,-0.10,-0.15,-0.20},
            yticklabel style={
            /pgf/number format/.cd,
            fixed,
            precision = 2
            }
            }
            ]
    	\addplot3[surf, shader=interp,] 
    		table[
                col sep=comma,
                x index = 0,
                y index = 1,
                z index = 2
            ] {error2d-2.csv};
    	\end{axis}
    \end{tikzpicture}
        \caption{Approximation error on level $2$.}
    \end{subfigure}

    \vspace{0.5cm}

    \begin{subfigure}{0.48\linewidth}
        \centering
        \begin{tikzpicture}[scale=0.7]
            \begin{axis}[
                xmin = -0.1,
                xmax = 3.1,
                ymin = -0.1,
                ymax = 3.1,]
            \addplot[
                only marks,
            ]
            table[
                col sep=comma,
                x index = 0,
                y index = 1
            ]{points2d-3.csv};
            \end{axis}
        \end{tikzpicture}
        \caption{Selected points on level $3$.}
    \end{subfigure}
    \hfill
    \begin{subfigure}{0.48\linewidth}
        \centering
        \begin{tikzpicture}[scale=0.7]
            \begin{axis}[
            axis lines=none,
            ticks=none,
            mesh/rows=30,
            view={45}{30},
    		colorbar,
            colormap/blackwhite,
            ]
    	\addplot3[surf, shader=interp,] 
    		table[
                col sep=comma,
                x index = 0,
                y index = 1,
                z index = 2
            ] {error2d-3.csv};
    	\end{axis}
        \end{tikzpicture}
        \caption{Approximation error on level $3$.}
    \end{subfigure}
    \caption{Interpolation error $e_{\ell}$ and selected point $\check{X}_{\ell}$ in the approximation process of $f_{2}$ on $[0,3]^{2}$ with algorithm \ref{alg:adapt} for $\ell = 2,3$.}
    \label{fig:2d}
\end{figure}



\end{document}